\date{}
\newtheorem{dfn}{Definition}
\title{Constrained Hitting Set and Steiner Tree in $SC_k$ and $2K_2$-free Graphs}
\author{S.Dhanalakshmi and N.Sadagopan} 
\institute{Indian Institute of Information Technology, Design and Manufacturing, Kancheepuram, Chennai, India. \\
\email{$\{mat12d001, sadagopan\}@iiitdm.ac.in$}}
\begin{document}
\maketitle

\begin{abstract}
\emph{Strictly Chordality-$k$ graphs ($SC_k$)} are graphs which are either cycle-free or every induced cycle is of length exactly $k, k \geq 3$. Strictly chordality-3 and strictly chordality-4 graphs are well known chordal and chordal bipartite graphs, respectively. For $k\geq 5$, the study has been recently initiated in \cite{sadagopan} and various structural and algorithmic results are reported. In this paper, we show that maximum independent set (MIS), minimum vertex cover, minimum dominating set, feedback vertex set (FVS), odd cycle transversal (OCT), even cycle transversal (ECT) and Steiner tree problem are polynomial time solvable on $SC_k$ graphs, $k\geq 5$. We next consider $2K_2$-free graphs and show that FVS, OCT, ECT, Steiner tree problem are polynomial time solvable on subclasses of $2K_2$-free graphs.
\\

\noindent \textbf{Keywords:} Strictly Chordality $k$ graphs, $2K_2$-free graphs, Feedback Vertex Set, Odd (Even) Cycle Transversal, Steiner tree.

\end{abstract}

\section{Introduction}

Strictly Chordality $k$ graphs ($SC_k$ graphs) are graphs which are either cycle-free or every induced cycle is of length $k$. This graph class was introduced very recently by Dhanalakshmi et al. in \cite{sadagopan} by generalizing Chordal and Chordal bipartite graphs in a larger dimension. $SC_3$ and $SC_4$ graphs are well known chordal graphs and chordal bipartite graphs, which are well studied as it helps to identify the gap between NP-Complete input instances and polynomial-time solvable input instances on many problems. Problems such as clique, independent set, coloring have polynomial-time algorithms restricted to $SC_3(SC_4)$ graphs. On the similar line, authors of \cite{sadagopan} have explored $SC_{k \geq 5}$ in detail from both structural and algorithmic front. In \cite{sadagopan}, polynomial-time algorithms for problems such as testing, Hamiltonian cycle, coloring, tree-width, and minimum fill-in have been presented.

In this paper, we revisit $SC_k$ graphs and study classical problems such as MIS, dominating set, FVS, OCT, ECT and Steiner tree. In recent times, these problems are extensively studied in the context of parameterized complexity \cite{octnp,ectnp}. Also, cycle hitting problems such as FVS, OCT, ECT have polynomial-time algorithms restricted to chordal and chordal bipartite graphs \cite{kloks}. Further, independent set and vertex cover also have polynomial-time algorithms in chordal \cite{gavril} and chordal bipartite graphs. Steiner tree, a generalization of classical minimum spanning tree problem and dominating set are known to be NP-Complete in chordal and chordal bipartite graphs.

It is important to highlight that chordal (chordal bipartite) graphs have a special ordering, on vertices namely \emph{perfect vertex elimination ordering} (\emph{perfect edge elimination ordering}) and this ordering is greatly used in solving all of the above combinatorial problems. For $SC_{k\geq 5}$ graphs, a \emph{vertex cycle ordering (VCO)} is proposed in \cite{sadagopan}. It would be an interesting attempt to see whether VCO helps in solving the above mentioned combinatorial problems restricted to $SC_k$ graphs. This is the first focus of this paper.

The second focus of this paper is to study subclasses of $2K_2$-free graphs from minimal vertex separator (MVS) perspective and analyze the complexity of cycle hitting problems in $2K_2$-free graphs. $2K_2$-free graphs have received good attention in the literature as it is a subclass of $P_5$-free graphs and a superclass of split graphs. Interestingly, Steiner tree \cite{white} and Dominating set \cite{white} is NP-Complete on $2K_2$-free graphs and other classical problems are polynomial-time solvable \cite{chung,hujter,meister}. In this paper, we investigate the complexity of cycle hitting problems and Steiner tree on subclasses of $2K_2$-free graphs and present polynomial-time algorithms for all of them.

%
%

\noindent \textbf{Organization of the paper:} In Section \ref{sec::defn}, we introduce basic terminologies and theorems used in this paper. The algorithmic results on $SC_k$ graphs; maximum independent set, odd (even) cycle transversal, feedback vertex set, dominating set and Steiner tree are presented in Section \ref{sec::algs}. In Section \ref{2k2}, we present the structural and algorithmic results on the subclasses of $2K_2$-free graphs.


\section{Preliminaries}
\label{sec::defn}

\subsection{Graph Preliminaries}
We follow the notation as in \cite{dbwest,golumbic}. Let $G$ be a simple, connected and undirected graph with the non-empty vertex set $V(G)$ and the edge set $E(G)$= \{\{$u,v$\} $\vert$ $u,v \in V(G)$ and $u$ is adjacent to $v$ in $G$ and $u \neq v$\}. The $neighborhood$ of a vertex $v$ of $G$, $N_G$($v$), is the set of vertices adjacent to $v$ in $G$. The degree of the vertex $v$ is $d_G(v) = \vert N_G(v) \vert$. Let $S\subset V(G)$, we define $N_G(S)$ as $\{u\in V(G)\vert ~ \forall ~v \in S, \{u,v\}\in E(G)\}$. A \emph{cycle} $C$ on $n$-vertices is denoted as $C_n$, where $V(C) = \{x_1, x_2, \ldots, x_n\}$ and $E(C) = \{\{x_1, x_2\}, \{x_2,x_3\}, \ldots, \{x_{n-1},x_n\}, \{x_n,x_1\}\}$. The graph $G$ is said to be $connected$ if every pair of vertices in $G$ has a path and if the graph is not connected it can be divided into disjoint connected $components$ $G_1, G_2, \ldots, G_k$, $k \geq 2$, where $V(G_i)$ denotes the set of vertices in the component $G_i$. The graph $G$ is said to be \emph{k-connected} (or \emph{k-vertex connected}) if there does not exist a set of $k-1$ vertices whose removal disconnects the graph. The graph $M$ is called a $subgraph$ of $G$ if $V(M)$ $\subseteq$ $V(G)$ and $E(M)\subseteq E(G)$. The subgraph $M$ of a graph $G$ is said to be $induced$ $subgraph$, if for every pair of vertices $u$ and $v$ of $M$, \{$u,v$\} $\in$ $E(M)$ if and only if \{$u,v$\} $\in$ $E(G)$ and it is denoted by $[M]$. An $induced$ $cycle$ is a cycle that is an induced subgraph of $G$. The graph $G$ is said to be \emph{cycle free} if there is no induced cycle in $G$. 

\subsection{Definitions and properties on $SC_k$ graphs}

\begin{theorem}{\cite{sadagopan}}
\label{thm::sckcons}
 A graph $G$ is a $SC_k$ graph if and only if it can be constructed iteratively by any one of the following operations.
\begin{itemize}
\item[(i)] $K_1$ is an $SC_k$ graph.
\item[(ii)] $C_k$ is an $SC_k$ graph.
\item[(iii)] If $G$ is an $SC_k$ graph, then the graph $G'$, where, $V(G') = V(G) \cup \{v\}$, $E(G') = E(G) \cup \{u, v\}$ such that $v \notin V(G)$ and $u$ is any vertex in $V(G)$,  is also an $SC_k$ graph.
\item[(iv)] If $G$ is an $SC_k$ graph, then the graph $G'$, where, $V(G') = V(G) \cup \{v_1, v_2, \ldots, v_{k-1}\}$, $E(G') = E(G) \cup \{\{u, v_1\}, \{v_1, v_2\}, \{v_2, v_3\},  \ldots, \{v_{k-2}, v_{k-1}\}, \{v_{k-1}, u\} \}$ such that $\{v_1, v_2, \ldots, v_{k-1}\} \cap V(G) = \phi$ and $u$ is any vertex in $V(G)$,  is also an $SC_k$ graph.
\item[(v)] If $G$ is an $SC_k$ graph, then the graph $G'$, where, $V(G') = V(G) \cup \{v_1, v_2, \ldots, v_{k-2}\}$, $E(G') = E(G) \cup \{\{u, v_1\}, \{v_1, v_2\}, \{v_2, v_3\}, \ldots, \{v_{k-2}, v\} \}$ such that $\{v_1, v_2, \ldots, v_{k-2}\} \cap V(G) = \phi$ and $\{u, v\}$ is any edge in $E(G)$,  is also an $SC_k$ graph.
\item[(vi)] If $G$ is an $SC_k$ graph and $ k=2m+4, m \geq 1$, then the graph $G'$, where, $V(G') = V(G) \cup \{v_1, v_2, \ldots, v_{\frac{k}{2}-1}\}$, $E(G') = E(G) \cup \{\{u_1, v_1\}, \{v_1, v_2\}, \{v_2, v_3\}, \ldots, \{v_{\frac{k}{2}-1}, u_{\frac{k}{2}+1}\} \}$ such that $\{v_1, v_2, \ldots, v_{\frac{k}{2}-1}\} \cap V(G) = \phi$ and $\{u_1,u_2,\ldots,u_{\frac{k}{2}+1}\}$ is any path of length $\frac{k}{2}+1$ contained in no induced cycle in $G$ or in any one induced cycle $S_i$ of length $k$ in $G$ such that there does not exist an induced cycle $S_j$ in $G$ with $V(S_i) \cap V(S_j) = \{w_1,\ldots,w_{\frac{k}{2}+1}\}$, $w_p = u_p$ for some $p \in\{1,\ldots, \frac{k}{2}+1\}$ and for at least one $q \in\{1,\ldots, \frac{k}{2}+1\}$, $w_q \neq u_q$.
\end{itemize}
\end{theorem}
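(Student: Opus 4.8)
\noindent\emph{Proof plan.} The statement is a biconditional, and the plan is to prove the two directions by different inductions: the ``if'' direction (\emph{soundness}: every graph produced by (i)--(vi) is $SC_k$) by induction on the number of construction steps, and the ``only if'' direction (\emph{completeness}: every $SC_k$ graph is so produced) by induction on $|V(G)|$.

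\emph{Soundness.} The base cases (i) and (ii) are immediate. For the inductive step, suppose $G$ is $SC_k$ and $G'$ is obtained from $G$ by one of (iii)--(vi). Connectivity of $G'$ is clear, since every new vertex is joined, directly or through a newly added path, to a vertex of $G$. It remains to check that every induced cycle of $G'$ has length exactly $k$. I would split the induced cycles of $G'$ into those lying entirely inside $G$, which have length $k$ by the induction hypothesis, and those containing at least one new vertex. The key observation is that in cases (iv)--(vi) every new vertex has degree $2$ in $G'$; hence an induced cycle of $G'$ meeting the new vertices must contain an entire new path, from one attachment point to the other. In case (iii) there is nothing to check. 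In case (iv) the only way to close such a cycle is through the single attachment vertex $u$, producing exactly the prescribed induced $C_k$. In case (v) the new path has $k-1$ edges and is closed either directly through the edge $\{u,v\}$, giving an induced $C_k$, or through a longer $u$--$v$ walk in $G$, in which case $\{u,v\}$ is a chord and no new induced cycle arises. In case (vi) the new path has $k/2$ edges and is closed through a path from $u_1$ to $u_{k/2+1}$ in $G$; here the side condition on $u_1,\dots,u_{k/2+1}$ is exactly what is needed to ensure that every such closure either has total length $k$ and is chordless, or else carries a chord. Throughout one also uses the routine fact that the $C_k$ newly created by (iv), (v) or (vi) is itself induced, since the new vertices are adjacent only along their path and at its two endpoints.

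\emph{Completeness.} Let $G$ be $SC_k$; I would induct on $|V(G)|$. If $G=K_1$, apply (i). If $G$ is cycle-free then, being connected, it is a tree on at least two vertices and so has a leaf $v$; by the induction hypothesis $G-v$ is obtainable, and (iii) reconstructs $G$. Otherwise $G$ has an induced cycle, hence every induced cycle of $G$ is a $C_k$. The heart of the argument is a structural claim: $G$ contains either a vertex of degree $1$, or an induced $C_k$ meeting the rest of $G$ in exactly one vertex, or in exactly one edge, or --- only when $k$ is even --- in exactly one path on $k/2+1$ vertices, such that deleting the ``private'' vertices of that $C_k$ again yields an $SC_k$ graph to which the induction hypothesis applies; these cases correspond to operations (iii), (iv), (v), (vi) respectively. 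To establish the claim I would first prove a lemma on how two induced $C_k$'s of an $SC_k$ graph can overlap: a parity/length count on the symmetric difference shows that two induced $C_k$'s sharing a common path on $\ell+1$ vertices leave, as symmetric difference, a cycle of length $2(k-\ell)$, which must be induced unless a chord intervenes; since a chord would force a shorter induced cycle, and $2(k-\ell)=k$ only for $\ell=k/2$, the overlap must be a single vertex, a single edge, or (for even $k$) a path on $k/2+1$ vertices, and similar reasoning excludes overlaps consisting of several disjoint pieces. With this in hand, I would record the overlap pattern of the induced $C_k$'s by an auxiliary tree-like structure (in the spirit of a clique tree or block--cut tree) and take a ``leaf'' $C_k$; one then checks that peeling its private vertices neither destroys any other induced cycle nor creates a new one, so the reduced graph is $SC_k$.

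\emph{Main obstacle.} The difficult part is the structural claim in the completeness direction --- guaranteeing that a peelable $C_k$ (or a degree-$1$ vertex) always exists, and, in the even-$k$ case, verifying that the intricate side condition in operation (vi) is precisely what makes the path-sharing gluing reversible. The even-$k$ analysis, where induced $C_k$'s may legitimately share long paths, is where essentially all of the work lies; for odd $k$ the reduction should be close to the classical simplicial-vertex and simplicial-edge elimination arguments used for chordal and chordal bipartite graphs.
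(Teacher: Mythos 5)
First, a point of comparison you could not have known: the paper you are contributing to does not prove this theorem at all --- it is quoted as Theorem~1 from the reference \cite{sadagopan} (arXiv:1606.00359), so there is no in-paper argument to measure your proposal against; I can only assess the plan on its own terms. On those terms, the soundness direction is fine in outline (the degree-two observation reduces everything to closures of the new path, and cases (iii)--(v) close as you say), although even there your treatment of (vi) is an assertion rather than an argument: you still need to show that a closure through a $u_1$--$u_{\frac{k}{2}+1}$ path of $G$ of length different from $\frac{k}{2}$ necessarily carries a chord, and this is precisely where the hypothesis that the path lies in at most one induced cycle, together with the extra clause about a second cycle $S_j$ meeting $S_i$ in a $(\frac{k}{2}+1)$-vertex set, has to be invoked explicitly.

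The genuine gap is in the completeness direction, and it is concentrated in the two steps you defer. First, the overlap lemma: the symmetric-difference count gives $2(k-\ell)=k$ only after you know the ``outer'' cycle is induced; for $2\leq \ell<\frac{k}{2}$ you must exclude chords running between the two outer arcs (each chord spawns new cycles whose lengths must again be $k$, and the resulting descent is the bulk of the work), and overlaps that are not a single path (two nonadjacent shared vertices, several shared pieces) are dismissed with ``similar reasoning'' although they produce theta-like configurations that need their own argument. Second, and more seriously, the leaf-peeling step: the auxiliary structure recording how induced $C_k$'s overlap is \emph{not} in general tree-like. Exactly when $k$ is even, many induced $C_k$'s can pairwise share $(\frac{k}{2}+1)$-vertex paths with the same two endpoints --- the $CAGE(n,l)$ configurations of Figure~\ref{fig::vco} --- and then the intersection structure contains cliques of cycles rather than leaves. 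In that situation you must still exhibit a peelable $C_k$ and, crucially, verify that after deleting its private vertices the surviving $(\frac{k}{2}+1)$-path satisfies the side condition of operation (vi); if it does not, the reverse construction step is unavailable and the induction on $|V(G)|$ does not go through. You name this as the ``main obstacle'' but give no argument for it, and since this is precisely the content that separates the even-$k$ characterization from the routine odd-$k$ (simplicial-style) case, the proposal as written is a plausible strategy outline rather than a proof of the ``only if'' direction.
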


Throughout this subsection, the graph $G$ refers to an $SC_k$ graph, $k \geq 5$. It is clear from the above bi-implication that we can get a Vertex Cycle Ordering (VCO) for any $G$ in at most $n$ iterations, where $n$ is the number of vertices in $G$ \cite{sadagopan}.

\begin{definition}
 Let $\mu = (x_1,\ldots, x_s)$, $1\leq s \leq n$, be the ordering of $G$. If $s=1$, then either $G$ is a trivial graph or a cycle of length $k$. If $s \geq 2$, then the label($x_i$), $i<s$, denotes the\\
\noindent (a) pendant vertex if it satisfies the condition (iii) of \emph{Theorem \ref{thm::sckcons}},\\
\noindent (b) 0-pendant cycle if it satisfies the condition (iv) of \emph{Theorem \ref{thm::sckcons}} and if $u$ is not a part of any cycle in $G$, \\
\noindent (c) 1-pendant cycle if it satisfies the condition (iv) of \emph{Theorem \ref{thm::sckcons}} and if $u$ is part of at least one cycle, \\
\noindent (d) 2-pendant cycle if it satisfies the condition (v) of \emph{Theorem \ref{thm::sckcons}} and\\ 
\noindent (e) $(\frac{k}{2}+1)$-pendant cycle if it satisfies the condition (vi) of \emph{Theorem \ref{thm::sckcons}} w.r.t the induced graph on $(x_i, x_{i+1}, \ldots, x_s)$. Note that, in a $(\frac{k}{2}+1)$-pendant cycle $S$, $S$ can have either $u_1$ or $u_{\frac{k}{2}+1}$ as a cut vertex but not both. 
\end{definition}

\begin{definition}
 A graph $G$ is said to be a cage graph of size $n$ denoted as $CAGE(n,l)$ if there exist $w,z \in V (G)$ such that $\{w,u^{i}_{1}\},\{z,u^{i}_{l-2}\} \in E(G)$ for all $1 \leq i \leq n$ and $P^{i}_{u_1u_{l−2}}$ is a path of length $l-2$.   
\end{definition}

\begin{figure}[H]
\centering
\includegraphics[height=3.5cm, width=4.5cm]{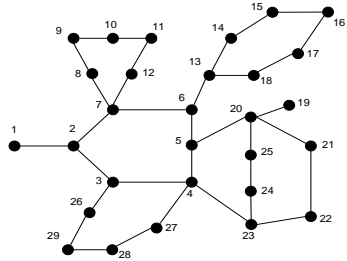}
\caption{An example for an $SC_6$ graph. One of the vertex cycle ordering for this graph is ($\{1\}$, $\{19\}$, $\{13,14,15,16,17,18\}$, $\{13\}$, $\{7,8,9,10,11,12\}$, $\{3,4,27,28,29,26\},$ $\{20,21,22,23,24,25\}$, $\{4,5,20,25,24,23\}$, $\{2,3,4,5,6,7\}$), where the vertices $1$ and $19$ are said to be pendant, $(13,14,15,16,17,18)$ is a 0-pendant cycle, $(7,8,9,10,11,12)$ is a 1-pendant cycle, $(3,4,27,28,29,26)$ as 2-pendant cycle and $(20,21,22,23,24,25)$ is a $4$-pendant cycle. The graph induced on the vertex set $\{4,5,20,21,22,23,24,25\}$ is the $CAGE(3,4)$.}
\label{fig::vco}
\end{figure}

\subsection{Definitions and properties on $2K_2$-free graphs}

\begin{lemma}{\cite{mano}}
 A connected graph is $2K_2$ free if and only if it forbids $H_{1}$, $H_{2}$ and  $H_{3}$ as an induced subgraphs.
\begin{figure}[H]
\begin{center}
\includegraphics[scale=0.3]{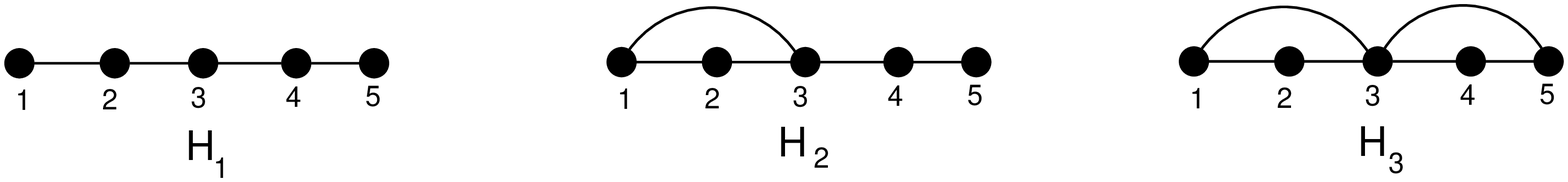} 
\end{center}
\end{figure}
\end{lemma}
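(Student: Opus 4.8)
The plan is to prove the two implications separately. The forward implication is immediate once one checks that each of $H_1,H_2,H_3$ already contains an induced $2K_2$: $H_1$ is a $P_5$, whose two end edges are disjoint and mutually non-adjacent, while $H_2$ and $H_3$ arise from a $P_5$ by adding chords that create triangles but leave such a pair of edges intact. Hence a $2K_2$-free graph can contain none of $H_1,H_2,H_3$ as an induced subgraph. Everything of substance is in the converse: if a connected graph $G$ contains an induced $2K_2$, I must exhibit an induced $H_1$, $H_2$ or $H_3$.

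For the converse the idea is a minimality (shortest-link) argument that uses connectedness to make the ``distance'' between the two edges of the $2K_2$ small. Writing $\mathrm{dist}_G$ for graph distance, among all induced copies of $2K_2$ in $G$ --- say on $\{a,b,c,d\}$ whose only edges are $\{a,b\}$ and $\{c,d\}$ --- pick one minimising the gap $g:=\min\{\mathrm{dist}_G(u,v):u\in\{a,b\},\ v\in\{c,d\}\}$; since none of $\{a,c\},\{a,d\},\{b,c\},\{b,d\}$ is an edge, $g\ge 2$. The crucial claim is $g=2$. Suppose $g\ge 3$ and, after relabelling, let $\mathrm{dist}_G(b,c)=g$, with a shortest path $b=v_0,v_1,\dots,v_g=c$. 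Then $v_1\notin\{a,b,c,d\}$ ($v_1\ne d$ as $\{b,d\}\notin E$, and $v_1\ne a$ as otherwise $\mathrm{dist}_G(a,c)=g-1<g$), $v_1\not\sim d$ (otherwise $\{b,v_1\},\{v_1,d\}\in E$ would give $\mathrm{dist}_G(b,d)\le 2<g$), and $v_1\not\sim c$ since $\mathrm{dist}_G(v_1,c)=g-1\ge 2$. Therefore $\{b,v_1,c,d\}$ with edges $\{b,v_1\}$ and $\{c,d\}$ is again an induced $2K_2$, now with gap $\le \mathrm{dist}_G(v_1,c)=g-1<g$, contradicting minimality. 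Hence $g=2$.

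With $g=2$ in hand, fix a shortest $b$--$c$ path $b,x,c$, so $x\sim b$, $x\sim c$ and $x\notin\{a,b,c,d\}$ (otherwise one of the non-edges $\{a,c\},\{b,c\},\{b,d\}$ would be forced). The only freedom left is whether $x$ is adjacent to $a$ and/or to $d$, giving three cases. If $x$ is adjacent to neither, $\{a,b,x,c,d\}$ induces the path $a\,b\,x\,c\,d$, i.e.\ $H_1=P_5$. If $x$ is adjacent to exactly one, say $x\sim a$, $x\not\sim d$ (the other case symmetric under swapping the two edges of the $2K_2$), then $\{a,b,x\}$ is a triangle carrying the pendant path $x\,c\,d$, which is $H_2$. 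If $x$ is adjacent to both, $\{a,b,x\}$ and $\{c,d,x\}$ are two triangles sharing exactly $x$, i.e.\ the bowtie $H_3$. In every case $G$ has an induced $H_i$, completing the converse.

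I expect the gap-reduction step to be the only delicate point: one has to confirm that the shortcut set $\{b,v_1,c,d\}$ really induces a $2K_2$ --- in particular that $v_1$ is non-adjacent to $d$ --- which is exactly where the hypothesis $g\ge 3$ is used. After $g=2$ everything reduces to a routine inspection of a five-vertex induced subgraph. (Should the three graphs in the figure be drawn differently from $P_5$, the triangle with a pendant path of length two, and the bowtie, the argument is unchanged except for matching these three minimal connected graphs containing an induced $2K_2$ to the picture.)
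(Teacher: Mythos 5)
The paper offers no proof of this lemma --- it is imported verbatim from \cite{mano} --- so there is no in-paper argument to compare yours against; judged on its own terms, your proof is correct and complete. The forward direction is fine, since each $H_i$ contains an induced $2K_2$, and the substance is, as you note, the converse. Your gap-minimisation step is sound: for an induced $2K_2$ on $\{a,b,c,d\}$ with edges $\{a,b\},\{c,d\}$ chosen to minimise $g=\min\{\mathrm{dist}_G(u,v):u\in\{a,b\},\ v\in\{c,d\}\}$, replacing $b$ by the first internal vertex $v_1$ of a shortest $b$--$c$ path when $g\ge 3$ really does yield another induced $2K_2$ (the required non-adjacencies $v_1\not\sim c$, $v_1\not\sim d$, $b\not\sim c$, $b\not\sim d$ all follow from $g\ge 3$, the definition of $g$ and the shortest-path choice), contradicting minimality; hence $g=2$, and the three-way case analysis on the adjacencies of the middle vertex $x$ to $a$ and $d$ produces exactly $P_5$, the triangle with a pendant path of length two, and the bowtie. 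Your closing hedge about the figure is harmless and in fact dispensable: these three graphs are precisely the minimal connected graphs containing an induced $2K_2$ (your own argument shows every connected graph with an induced $2K_2$ contains one of them, and none contains another as a proper induced subgraph), so the lemma as stated, with three connected forbidden subgraphs, can only hold with $H_1,H_2,H_3$ being these graphs up to isomorphism, which is indeed what the figure from \cite{mano} shows.
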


\begin{dfn}
Let $G$ be a graph and $S \subset V(G)$. A vertex $v \in V(G\backslash S)$ is said to be a \emph{universal vertex} if $\forall ~ x \in S, \{x, v\} \in E(G)$. An edge $\{u, v\}$ is said to be a \emph{universal edge} if $\forall ~x \in S$, either $\{x, u\} \in E(G)$ or $\{x, v\} \in E(G)$.
\end{dfn}

\begin{theorem}{\cite{mano}}
\label{2k2pre}
Let $G$ be a connected graph and $S$ be any minimal vertex separator of $G$. Let $G_1, G_2, \ldots , G_l$, $(l \geq 2)$ be the connected components in $G\backslash S$. $G$ is $2K_2$ free if and only if it satisfies the following conditions:
\begin{itemize}
\item[(i)] $G\backslash S$ contains at most one non-trivial component. Further, if $G\backslash S$ has a non-trivial component, say $G_1$, then the graph induced on $V(G_1)$ does not contain $H_1$, $H_2$, $H_3$ as an induced subgraphs.
\item[(ii)] Every trivial component of $G\backslash S$ is universal to $S$.
\item[(iii)] Every edge in the non-trivial component of $G\backslash S$ is universal to $S$.
\item[(iv)] The graph induced on $V(S)$ is either connected or has at most one non-trivial component. Further, if the graph induced on $V(S)$ has a non-trivial component, say $S_1$, then the graph induced on $V(S_1)$ does not contain $H_1$, $H_2$, $H_3$ as an induced subgraphs.
\item[(v)] If $S$ and $G\backslash S$ has a non-trivial component, say $S_1$ and $G_1$, respectively, then every edge in $S_1$ is universal to $G_1\backslash M$, where $M = \{v \in V(G_1) \mid N_G(v) \cap V (S) = \phi\}$.
\end{itemize}
\end{theorem}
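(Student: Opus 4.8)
\noindent The plan is to prove the two implications separately. For the forward direction, assume $G$ is $2K_2$-free and verify (i)--(v) by contraposition: whenever one of them fails, exhibit an induced $2K_2$. Three facts carry most of the weight. First, vertices lying in distinct components of $G\backslash S$ are pairwise non-adjacent. Second, every induced subgraph of a $2K_2$-free graph is again $2K_2$-free, so for the connected pieces $[V(G_1)]$ and $[V(S_1)]$ the preceding Lemma of \cite{mano} converts ``$2K_2$-free'' into ``$\{H_1,H_2,H_3\}$-free'', which is exactly the second halves of (i) and (iv). Third, since $S$ is a \emph{minimal} vertex separator, every vertex of $S$ has a neighbour in at least two components of $G\backslash S$; together with (i) this forces the existence of a trivial component of $G\backslash S$ that is universal to $S$, which then serves as an ``anchor'' for building independent edges. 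Concretely: failure of the component count in (i) yields two independent edges, one from each of two non-trivial components of $G\backslash S$; failure of (ii) or (iii) means some $x\in V(S)$ misses a trivial component $\{v\}$, resp.\ misses an edge of the non-trivial component, and one pairs the edge witnessing this against an edge incident to $x$ inside a full component; failure of the component count in (iv) yields two independent edges inside $[V(S)]$; and failure of (v) gives a vertex $v\in V(G_1)\backslash M$ missed by an edge $\{p,q\}$ of $S_1$, and, since $v\notin M$, a neighbour of $v$ in $S$ completes a $2K_2$ with $\{p,q\}$.

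\noindent For the backward direction, assume (i)--(v) and suppose for contradiction that $\{a,b\},\{c,d\}$ induce a $2K_2$ in $G$. Proceed by a case analysis on $t=|\{a,b,c,d\}\cap V(S)|$. If $t=0$, both edges lie in $G\backslash S$: if in the same component this contradicts the $\{H_1,H_2,H_3\}$-freeness in (i) via the Lemma, and if in different components it contradicts the component count in (i); the case $t=4$ is symmetric via (iv). If $t=1$, or $t=2$ with the two $S$-vertices contained in a single one of the two edges, then the other edge lies in $G\backslash S$ and hence inside the unique non-trivial component $G_1$; by (iii) that edge is universal to $S$, so it is adjacent to the $S$-endpoint of the first edge, contradicting the $2K_2$. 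The remaining ``straddling'' cases, namely $t=2$ with one $S$-vertex in each edge and $t=3$, are handled by locating $G_1$, observing that the $G\backslash S$-endpoints lie outside $M$ (each is adjacent to an $S$-vertex of the $2K_2$), and then combining (iii) (which propagates adjacencies along the edges of $G_1$), (v) (which makes every edge of $S_1$ universal to $V(G_1)\backslash M$), and the anchor trivial component supplied by (i)--(ii), to force an edge across the two edges of the purported $2K_2$.

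\noindent I expect the straddling cases of the backward direction to be the main obstacle. There the four vertices split between $S$ and $G_1$, no single one of (i)--(iv) excludes the configuration, and one must chain together condition (v), condition (iii), and the minimal-separator structure --- in particular correctly distinguishing whether $[V(S)]$ is connected or has a non-trivial component $S_1$, and tracking which of the four vertices fall in $M$ --- in order to produce the missing edge. That bookkeeping, together with checking that the universal trivial component interacts as claimed, is the delicate part; the rest reduces to routine $2K_2$-detection.
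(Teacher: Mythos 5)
The paper never proves this statement: Theorem~\ref{2k2pre} is imported from \cite{mano} without proof, so your proposal has to stand on its own, and as written it has genuine gaps at exactly the points that carry the content. In the forward direction, your argument for (v) is invalid: from an edge $\{p,q\}$ of $S_1$ missing a vertex $v\in V(G_1)\backslash M$ you take a neighbour $y\in S$ of $v$ and declare $\{v,y\},\{p,q\}$ a $2K_2$, but $y$ lies in $S$ and may be adjacent to $p$ or $q$ (typically it is, e.g.\ when $y\in S_1$), so no induced $2K_2$ appears; this is the only nontrivial case of (v) and it is left open (one can in fact build $2K_2$-free graphs with a minimal separator realizing precisely this configuration, so the case cannot be waved away). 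Moreover, the separator fact you invoke, ``every vertex of $S$ has a neighbour in at least two components of $G\backslash S$'', is the minimal $(a,b)$-separator property and is too weak for what you use it for: with only that property, (ii) is simply false in $2K_2$-free graphs (take the $C_4$ on $a,x,b,y$ and attach a pendant vertex $v$ to $x$; $\{x,y\}$ is a minimal $a$--$b$ separator and the trivial component $\{v\}$ is not universal to it), and your ``anchor'' trivial component universal to $S$ does not follow either, since different vertices of $S$ may attach to different trivial components. What is needed (and what makes (ii) immediate) is inclusion-minimality of $S$, under which every vertex of $S$ has a neighbour in \emph{every} component of $G\backslash S$; you never fix which notion you are using, and the proof changes depending on it.

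In the backward direction the ``straddling'' cases are not delicate bookkeeping that remains to be done: they cannot be closed at all from (i)--(v) applied to the single fixed separator $S$, which is all your plan ever uses. Consider the graph on $\{a,b,c,d,x,w\}$ with edges $ab$, $ax$, $bx$, $cx$, $dx$, $cd$, $aw$, $cw$. Then $S=\{a,c\}$ is a minimal vertex separator; $G\backslash S$ has one non-trivial component $\{b,x,d\}$ (an induced $P_3$, hence $H_1,H_2,H_3$-free) and one trivial component $\{w\}$ universal to $S$; both edges $bx$ and $xd$ are universal to $S$; $[S]$ has no non-trivial component, so (iv) holds and (v) is vacuous --- yet $\{a,b\}$ and $\{c,d\}$ induce a $2K_2$ straddling $S$. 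Hence no combination of (iii), (v) and the anchor taken with respect to this one $S$ can ``force the missing edge''; the implication only has a chance if the conditions are required for \emph{every} minimal vertex separator (in the example the separator $\{x,w\}$ violates (i)), and a correct argument must exploit a second separator or otherwise quantify over all of them, which your outline never does. Until the forward case of (v) is repaired and the straddling cases are handled under that explicitly stronger hypothesis, what you have is a treatment of the routine cases rather than a proof of the theorem.
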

\section{Algorithmic Results on $SC_k$ graphs}
\label{sec::algs}

Let $G$ be a strictly chordality $k$ graph, $k\geq 5$, and let $\mu = (x_1, \ldots,x_s)$ be the VCO of $G$, $1 \leq s \leq n$. Each algorithm makes use of a VCO and picks the \emph{desired} vertices. At every stage of the algorithm, \emph{pruning} of undesired vertices is also done. Our algorithms are based on dynamic programming paradigm. 

\noindent For each $x_i$, $1\leq i \leq s$, we define \emph{label(x$_i$)} that denotes the associated vertices in $x_i$. For Figure \ref{fig::vco}, $\mu = (x_1,\ldots, x_9)$, where $label(x_1)=\{1\}$, $label(x_3)=\{13,14,15,16,17,18\},\ldots$,  $label(x_9)=\{2,3,4,5,6,7\}$. \\

\noindent \textbf{Problem 1} \emph{Maximum Independent Set (MIS)}.

Given an $SC_k$ graph $G$, $k\geq 5$, an independent set $S\subseteq V(G)$ such that $\forall ~ u,v \in S, ~ u,v \notin E(G)$. The objective is to find an independent set in $G$ of maximum cardinality. We now present an algorithm to find a MIS. 

\begin{itemize}
\item[1.] Let $\mu = (x_1,\ldots, x_s)$, $1 \leq s \leq n$, be the VCO of $G$
\item[2.] Find an MIS $S'$ for $label(x_1)$. Add $S'$ to $S$.
\item[3.] Remove $S'\cup N_G(S')$ from $G$ and let the resulting graph be $G'$.
\item[4.] Update $\mu$ and repeat \emph{Steps 2} and $3$.   
\end{itemize}
Let $I(G)$ denote the independent set of $G$ with maximum size. Then, $
I(G) = I(label(x_1)) \cup I(G\backslash M)$ where, $M= S'\cup N_G(S')$.\\

\noindent \textbf{Computing $I(label(x_1))$:}
\begin{lemma}
\label{x1pendvertex}
$I(label(x_1)) = \{u\}$ if $label(x_1)=\{u\}$ is a pendant vertex.
\end{lemma}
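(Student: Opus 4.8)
Since $label(x_1)=\{u\}$ by hypothesis, the subgraph induced on $label(x_1)$ consists of the single vertex $u$, which is trivially an independent set and obviously the largest one; so $I(label(x_1))=\{u\}$ is immediate when read purely as a statement about that one-vertex subgraph. The point that actually needs to be checked — and which I would foreground in the write-up — is that committing to $u$ is consistent with the stated recurrence $I(G)=I(label(x_1))\cup I(G\setminus M)$ for a \emph{global} maximum independent set; that is, that some maximum independent set of $G$ contains $u$ and avoids every neighbour of $u$. I would establish this by the usual pendant-vertex exchange argument.

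Because $label(x_1)=\{u\}$ is a pendant vertex it satisfies condition (iii) of Theorem \ref{thm::sckcons}, so $u$ has exactly one neighbour in $G$; call it $w$. Let $A$ be a maximum independent set of $G$. If $u\notin A$ and $w\notin A$, then $A\cup\{u\}$ is independent and strictly larger, contradicting maximality, so this case does not arise. If $u\notin A$ and $w\in A$, then $A'=(A\setminus\{w\})\cup\{u\}$ is independent, since the only neighbour of $u$ is $w$, and $|A'|=|A|$; hence $A'$ is a maximum independent set containing $u$. In every case there is a maximum independent set $A^{\star}$ with $u\in A^{\star}$, and then necessarily $w\notin A^{\star}$, so $A^{\star}\cap M=\{u\}$ where $M=\{u\}\cup N_G(u)=\{u,w\}$. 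Conversely, for any independent set $B$ of $G\setminus M$ the set $B\cup\{u\}$ is independent in $G$ (since the unique neighbour $w$ of $u$ is not in $G\setminus M$); comparing cardinalities yields $|I(G\setminus M)|=|A^{\star}|-1$, so $|I(G)|=1+|I(G\setminus M)|$ and this optimum is attained by $\{u\}\cup I(G\setminus M)$, which is exactly the recurrence with $I(label(x_1))=\{u\}$.

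I do not anticipate a real obstacle inside this lemma: it is essentially the base case of the MIS recurrence, and the exchange step above is routine. The only bookkeeping worth a sentence is that $G\setminus M$ is again a strictly chordality-$k$ graph — deleting vertices never creates a new induced cycle, so every induced cycle of $G\setminus M$ is already an induced cycle of $G$ and hence has length $k$ — so the algorithm's ``update $\mu$'' step is well defined, applied to each connected component in case the deletion of $w$ disconnected $G$. The genuine difficulty in this section lies in the other cases of computing $I(label(x_1))$ — the $0$-, $1$-, $2$- and $(\tfrac{k}{2}+1)$-pendant cycles and the $CAGE$ configuration — rather than in the pendant-vertex case treated here.
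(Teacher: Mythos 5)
Your proof is correct, and it takes a cleaner route than the paper's. You use the standard pendant-vertex exchange argument: take any maximum independent set $A$; if $u\notin A$ then either $A\cup\{u\}$ is independent and larger (impossible), or the unique neighbour $w$ lies in $A$ and the swap $(A\setminus\{w\})\cup\{u\}$ preserves independence and cardinality, so some maximum independent set contains $u$ and avoids $w$. You then explicitly verify the recurrence $|I(G)|=1+|I(G\setminus M)|$ with $M=\{u,w\}$, and note that $G\setminus M$ is again an $SC_k$ graph, so the algorithm's recursion is well defined. The paper instead argues by contradiction: it assumes $u$ lies in no maximum independent set, concludes the neighbour $v$ must lie in $I(G)$, and asserts that choosing $v$ ``forces'' excluding $N_G(v)$ and therefore contradicts maximality — except in certain path-like cases ($P_{2m}$, etc.) which it lists but does not fully resolve. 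That argument is looser than yours: excluding $N_G(v)$ does not by itself contradict maximality, which is presumably why the paper's proof ends with exceptional cases. Your exchange argument is unconditional, needs no case distinction on the structure of $G$ beyond $u$ being pendant, and additionally justifies the two points the paper leaves implicit (the validity of the recurrence and the closure of $SC_k$ under vertex deletion), so it is the preferable write-up of this base case.
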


\vspace{-0.4cm}

\begin{proof}
On the contrary, assume that $u$ is not a part of any maximum independent set of $G$. Since $u$ is a pendant vertex, $N_G(u)$ is a singleton set, say $\{v\}$. If $v$ is also a pendant vertex, then there is nothing to prove. Assume that $v$ is not a pendant vertex. It is clear from the definition of $I(G)$ that either $u \in I(G)$ or $v \in I(G)$. By our assumption, $u \notin I(G)$. Thus, $v \in I(G)$. By choosing $v$, we are forced not to add the vertices in $N_G(v)$, whose cardinality is strictly greater than zero. This will contradict the maximality of $I(G)$ unless $G$ is either $P_{2m}, m \geq 2$ or $\vert P_{ux} \vert \geq 2m-1, m \geq 2$ where $x$ i
s the first vertex of degree at least three in $G$. 
$\hfill \qed$
\end{proof}

\begin{lemma}
\label{x10/1pendcycle}
Let $label(x_1)=\{u_1,\ldots,u_k\}$ be the 0-pendant cycle (or 1-pendant cycle) where $deg_G(u_1)\geq 3$, $\{u_1,u_k\} \in E(G)$ and $\{u_i,u_{i+1}\}\in E(G), 1 \leq i \leq k-1$. Then $I(label(x_1)) = \{u_2, u_4, \ldots, u_{k-1}\}$ if $k$ is odd and $I(label(x_1)) = \{u_2, u_4, \ldots, u_{k}\}$ if $k$ is even.
\end{lemma}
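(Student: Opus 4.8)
The plan is to use that $label(x_1)$ sits inside $G$ as an induced cycle $C_k$ on $u_1,\dots,u_k$ that is glued to the rest of $G$ through $u_1$ alone. Since $label(x_1)$ is a $0$-pendant (or $1$-pendant) cycle, each $u_i$ with $i\ge 2$ satisfies $N_G(u_i)\subseteq\{u_{i-1},u_{i+1}\}$ (indices modulo $k$), so $deg_G(u_i)=2$ and $u_1$ is the only vertex of the cycle with a neighbour in $V(G)\setminus V(C_k)$; in particular $u_1$ separates $\{u_2,\dots,u_k\}$ from the rest of the graph. Write $S'=\{u_2,u_4,\dots,u_{k-1}\}$ when $k$ is odd and $S'=\{u_2,u_4,\dots,u_k\}$ when $k$ is even. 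The first step is the routine check that in either parity $S'\subseteq V(C_k)\setminus\{u_1\}$ is an independent set with $|S'|=\lfloor k/2\rfloor$, i.e. $S'$ is a maximum independent set of $C_k$ (and also of the path $u_2u_3\cdots u_k$).

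The core is an exchange step proving that $S'$ is a correct value of $I(label(x_1))$. Let $I$ be any maximum independent set of $G$. Since $[label(x_1)]=C_k$, the set $I\cap V(C_k)$ is an independent set of $C_k$, so $|I\cap V(C_k)|\le\lfloor k/2\rfloor=|S'|$. Put $I'=(I\setminus V(C_k))\cup S'$. I would argue $I'$ is independent: $S'$ is independent in the induced cycle $C_k$; every vertex of $S'$ has both of its neighbours inside $V(C_k)$, hence no edge joins $S'$ to $I\setminus V(C_k)$; and $u_1\notin S'$, so the edges leaving $u_1$ towards $V(G)\setminus V(C_k)$ are irrelevant. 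Therefore $|I'|=|I\setminus V(C_k)|+|S'|\ge|I\setminus V(C_k)|+|I\cap V(C_k)|=|I|$, so $I'$ is again a maximum independent set of $G$, now with $I'\cap V(C_k)=S'$. Thus committing to $S'$ as the contribution of $label(x_1)$ does not decrease the optimum.

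To close the loop with the recurrence $I(G)=I(label(x_1))\cup I(G\setminus M)$, $M=S'\cup N_G(S')$, I would observe that in either parity the neighbours of $S'$ are precisely the remaining vertices of the cycle (so in particular $u_1\in N_G(S')$), while $N_G(S')$ contains nothing outside $V(C_k)$ because each vertex of $S'$ has degree $2$; hence $M=V(C_k)$ and $G\setminus M=G\setminus V(C_k)$. The exchange step produces a maximum independent set of $G$ whose trace on $V(C_k)$ is $S'$, whence $|I(G)|\le|S'|+|I(G\setminus V(C_k))|$; conversely, any independent set of $G\setminus V(C_k)$ together with $S'$ is independent in $G$ (again because $u_1$ is the only link), giving the reverse inequality. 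Hence $|I(G)|=|S'|+|I(G\setminus V(C_k))|$, which is exactly the assertion $I(label(x_1))=S'$.

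I do not anticipate a genuine obstacle. The only care needed is in the parity bookkeeping — verifying that the stated set $S'$ really has $\lfloor k/2\rfloor$ vertices and is maximum both in $C_k$ and in the path $u_2\cdots u_k$ — and in noting, for $k$ odd (where the alternative $\{u_1,u_3,\dots,u_{k-2}\}$ also attains $\lfloor k/2\rfloor$ inside the cycle), that keeping $u_1$ out of the solution can only help globally, since it leaves all of $N_G(u_1)\setminus V(C_k)$ available; the exchange step makes this precise.
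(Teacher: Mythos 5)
Your proposal is correct and takes essentially the same route as the paper: show that the stated set is a maximum independent set of the induced cycle $C_k$ and that avoiding the unique attachment vertex $u_1$ (the only vertex with $deg_G(u_1)\geq 3$) cannot hurt the global optimum. Your explicit exchange step (replacing $I\cap V(C_k)$ by $S'$ in any maximum independent set) and the check that $M=S'\cup N_G(S')=V(C_k)$ merely make rigorous what the paper asserts informally via the degree condition on $u_1$.
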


\begin{proof}
It is clear that, the maximum size of an independent set of a cycle $C_k$ is $\lfloor \frac{k}{2} \rfloor$. The cardinality of the given set $I(label(x_1))$ is $\lfloor \frac{k}{2} \rfloor$. Thus, $I(label(x_1))$ is the maximum independent set of $label(x_1)$. It remains to show that the set $I(label(x_1))$ does not affect the maximality of $I(G)$. i.e., to prove that the maximality of $I(G)$ is affected if we choose $I(label(x_1))=\{u_1,u_3,\ldots,u_{k-2}\}$ when $k$ is odd and $I(label(x_1))=\{u_1,u_3,\ldots,u_{k-1}\}$ when $k$ is even. It is enough to prove that $u_1$ is not part of $I(G)$. Since $deg_G(u_1)\geq 3$, any MIS $I'$ containing $u_1$ has the property that $I' <I$. Thus, if we choose $u_1$ for $I(label(x_1))$, then the cardinality of the resultant independent set for $G$ is either $\vert I(G)\vert $ or less than $\vert I(G)\vert$. $\hfill \qed$
\end{proof}

\begin{lemma}
\label{x12pendcycle}
Let $label(x_1)=\{u_1,\ldots,u_k\}$ be the 2-pendant cycle where $\{u_1,u_k\} \in E(G)$ and $\{u_i,u_{i+1}\}\in E(G), 1 \leq i \leq k-1$,  $deg_G(u_1)\geq 3$ and $deg_G(u_2)\geq 3$. Then $I(label(x_1))=\{u_3,u_5,\ldots,u_k\}$ if $k$ is odd and $I(G) = \max \{I_1(label(x_1))\cup I(G\backslash M_1), I_2(label(x_1))\cup I(G\backslash M_2), I_3(label(x_1)) \cup I(G\backslash M_3)\}$ if $k$ is even, where $I_1(label(x_1)) = \{u_1,u_3,\ldots,u_{k-1}\}$, $I_2(label(x_1)) = \{u_2,u_4,\ldots,u_{k}\}$, $I_3(label(x_1))=\{u_3,\ldots,u_{k-1}\}$ and $M_i=\bigcup\limits_{u \in I_i(label(x_1))}(u \cup N_G(u))$, $i \in \{1,2,3\}$.
\end{lemma}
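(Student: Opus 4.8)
The plan is to branch on the membership of the two high–degree attachment vertices $u_1,u_2$ in an optimal independent set, exploiting that $u_3,\dots,u_k$ are precisely the cycle vertices having no neighbour outside $C:=label(x_1)=\{u_1,\dots,u_k\}$. Since a $2$–pendant cycle is (by the VCO) the most recently added structure, each $u_i$ with $3\le i\le k$ has $deg_G(u_i)=2$ and $N_G(u_i)=\{u_{i-1},u_{i+1}\}$ (indices on the cycle), so $N_G(T)\subseteq C$ for every $T\subseteq\{u_3,\dots,u_k\}$. I will also use the fact (already used in the previous lemma) that a maximum independent set of $C_k$ has size $\lfloor k/2\rfloor$, together with the observation that for even $k$ the only independent sets of $C_k$ of size $k/2$ are its two colour classes, each of which contains exactly one of the adjacent vertices $u_1,u_2$. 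Throughout, for an independent set $J$ of $G$ and a vertex set $C$, the key identity is $|J|=|J\cap C|+|J\backslash C|$, where $J\backslash C$ is independent in $G\backslash C$, and the monotonicity $|I(H')|\le |I(H)|$ whenever $H'$ is an induced subgraph of $H$.

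\emph{Odd $k$.} Put $T=\{u_3,u_5,\dots,u_k\}$. It is independent in $G$, $|T|=(k-1)/2=\lfloor k/2\rfloor$, and $N_G[T]=C$ (here $u_1\in N_G(u_k)$, $u_2\in N_G(u_3)$, and every even–indexed vertex is a neighbour of an element of $T$), so $M:=T\cup N_G(T)=C$ and $T\cup I(G\backslash C)$ is independent. For any independent set $J$ of $G$ we get $|J|=|J\cap C|+|J\backslash C|\le (k-1)/2+|I(G\backslash C)|=|T\cup I(G\backslash C)|$. Hence $T\cup I(G\backslash C)$ is optimal and $I(label(x_1))=T$, with no branching needed.

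\emph{Even $k$.} First I would verify that $I_1,I_2$ are independent of size $k/2$, that $I_3$ is independent of size $k/2-1$, and that $M_i=N_G[I_i]$ removes all of $C$ for $i=1,2$ and removes $\{u_2,\dots,u_k\}$ (but keeps $u_1$) for $i=3$; consequently each $I_i\cup I(G\backslash M_i)$ is independent, since the only possible edges from $I_1$ (resp.\ $I_2$) to the rest of $G$ emanate from $u_1$ (resp.\ $u_2$), which $M_1$ (resp.\ $M_2$) deletes, and every neighbour of $I_3$ lies in $M_3$; thus the right–hand maximum is at most $|I(G)|$. For the reverse inequality, take an optimal $J$ and split on $J\cap\{u_1,u_2\}$, which cannot equal $\{u_1,u_2\}$ because $\{u_1,u_2\}\in E(G)$. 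If $u_1\in J$, $u_2\notin J$, then $|J\cap C|\le k/2=|I_1|$ and $J\backslash C$ is independent in $G$ and disjoint from $N_G(u_1)$, hence independent in $G\backslash M_1$, so $|J|\le |I_1|+|I(G\backslash M_1)|$; the case $u_1\notin J$, $u_2\in J$ is symmetric via $I_2,M_2$. If $u_1,u_2\notin J$, then $J\cap C$ is an independent set of the path $u_3\!-\!u_4\!-\!\cdots\!-\!u_k$ on $k-2$ vertices, so $|J\cap C|\le k/2-1=|I_3|$, while $J\backslash C$ is independent in $G\backslash M_3$ (as $M_3\subseteq C$), giving $|J|\le |I_3|+|I(G\backslash M_3)|$. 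Hence $|I(G)|$ equals the right–hand maximum, which yields the stated recurrence.

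The only genuinely non-routine point — and the step I would guard most carefully — is the bookkeeping inside each branch: one must check that fixing the free degree-$2$ vertices $u_3,\dots,u_k$ to the stated alternating pattern loses nothing (this is exactly where $N_G(\{u_3,\dots,u_k\})\subseteq C$ is used) and that the pattern attains the claimed cardinality; for even $k$ this depends on a small parity argument, namely that after forbidding the appropriate endpoints the remaining subpath (e.g.\ $u_3,\dots,u_{k-1}$) has an odd number of vertices, so its maximum independent set has size $k/2-1$. The two upper-bound directions are then immediate from $|J|=|J\cap C|+|J\backslash C|$ and subgraph monotonicity of the independence number.
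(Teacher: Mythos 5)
Your proposal is correct and follows essentially the same approach as the paper: for odd $k$ the alternating set avoiding both high-degree vertices $u_1,u_2$ is shown optimal, and for even $k$ one branches on whether $u_1$, $u_2$, or neither lies in an optimum solution, which is exactly the three-way maximum in the statement. Your write-up merely fills in the details the paper leaves implicit (the degree-$2$ property of $u_3,\dots,u_k$, the identity $\vert J\vert=\vert J\cap C\vert+\vert J\setminus C\vert$, and the path bound $k/2-1$ in the third branch), so it is a more rigorous rendering of the same argument rather than a different route.
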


\begin{proof}
We prove this lemma by splitting $k$ into odd and even. \textbf{Case 1:} When $k$ is odd. The size of the set $I(label(x_1))$ is $\lfloor \frac{k}{2} \rfloor$, which is the maximum size of an independent set in an odd cycle of length $k$. An argument similar to \emph{Lemma \ref{x10/1pendcycle}} proves that the set $I(label(x_1))$ does not affect the maximality of $I(G)$. \textbf{Case 2:} When $k$ is even. The size of both the sets $I_1(label(x_1))$ and $I_2(label(x_1))$ are $\frac{k}{2}$, which is the maximum size of an independent set in an even cycle of length $k$. In order to get the maximum independent set for $G$, the maximum is taken over $I_i(label(x_1))\cup I(G\backslash M_i)$, $i=1,2,3$, and the conclusion follows. We consider $I_3(label(x_1))$, not to contradict the maximality of $I(G)$ due to the presence of both $u_1$ and $u_2$. 
$\hfill \qed$
\end{proof}

\begin{lemma}
\label{x1k/2+1pendcycle}
Let $label(x_1)=\{u_1,\ldots,u_k\}$ be the $(\frac{k}{2}+1)$-pendant cycle where $\{u_1,u_k\}\in E(G)$, $\{u_i,u_{i+1}\}\in E(G), 1 \leq i \leq k-1$,  $deg_G(u_1)\geq 3$ and $deg_G(u_{\frac{k}{2}+1})\geq 3$. Then  $I(label(x_1))=\{u_2,u_4,\ldots,u_k\}$ if $k=4m+4, m \in \mathbb{N}$ and $I(G) = \max \{I_1(label(x_1))\cup I(G\backslash M_1), I_2(label(x_1))\cup I(G\backslash M_2)\}$ if $k=4m+2, m \in \mathbb{N}$, where $I_1(label(x_1)) = \{u_1,u_3,\ldots,u_{k-1}\}$, $I_2(label(x_1)) = \{u_2,u_4,\ldots,u_{k}\}$ and $M_i = \bigcup\limits_{u \in I_i(label(x_1))}(\{u\} \cup N_G(u))$, $i=1,2$.
\end{lemma}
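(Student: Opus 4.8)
The plan is to mirror the structure of the proof of Lemma~\ref{x1k/2+1pendcycle}'s predecessors, splitting on the residue of $k$ modulo $4$, since a $(\frac{k}{2}+1)$-pendant cycle is an induced $C_k$ that attaches to the rest of $G$ along a path, and the parity of $\frac{k}{2}$ governs how a maximum independent set of $C_k$ interacts with the two high-degree attachment vertices $u_1$ and $u_{\frac{k}{2}+1}$. First I would recall the elementary fact that $\alpha(C_k) = \lfloor k/2 \rfloor$, and that when $k$ is even the maximum independent sets of $C_k$ are exactly the two ``alternating'' sets $\{u_1,u_3,\ldots,u_{k-1}\}$ and $\{u_2,u_4,\ldots,u_k\}$; every other independent set of $C_k$ has size at most $k/2 - 1$. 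So the only candidate maximum independent sets of $label(x_1)$ worth keeping are $I_1(label(x_1))$ and $I_2(label(x_1))$, plus possibly a size-$(\frac{k}{2}-1)$ set that avoids both cut vertices — but here, unlike the $2$-pendant case, a $(\frac{k}{2}+1)$-pendant cycle has at most one of $u_1, u_{\frac{k}{2}+1}$ as a cut vertex (as noted right after the definition), which is what makes the clean two-way split possible.

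Next, for the case $k = 4m+4$: here $\frac{k}{2}+1 = 2m+3$ is odd, so in the set $I_2(label(x_1)) = \{u_2, u_4, \ldots, u_k\}$ the even-indexed vertices miss $u_1$ (index $1$, odd) and miss $u_{2m+3}$ (index $2m+3$, odd); thus $I_2$ avoids \emph{both} potential cut vertices while still being a maximum independent set of $C_k$. Since $I_2$ contains neither attachment vertex, removing $M$ for this choice deletes nothing outside $label(x_1)$ beyond the neighbours internal to the cycle and one neighbour each of $u_2$ and $u_k$ lying on the attaching path — but crucially it imposes no constraint that some competing choice could exploit better, because any other maximum independent set of $C_k$ (namely $I_1$) \emph{does} contain a cut vertex and hence can only do worse on $G \setminus M$. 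So I would argue, exactly as in Lemma~\ref{x10/1pendcycle}, that committing to $I_2(label(x_1))$ is optimal and no max is needed.

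For the case $k = 4m+2$: now $\frac{k}{2}+1 = 2m+2$ is even, so the even-indexed set $I_2$ \emph{contains} $u_{2m+2}$ while the odd-indexed set $I_1$ contains $u_1$; each of the two maximum independent sets of $C_k$ contains exactly one of the two distinguished vertices. Since at most one of $u_1, u_{\frac{k}{2}+1}$ is actually a cut vertex, one of $I_1, I_2$ may ``cost'' a lot (forcing deletion of a large portion of $G$ hanging off that cut vertex) while the other costs little; which one is better depends on $G$, so we genuinely must take the maximum of $I_1(label(x_1)) \cup I(G \setminus M_1)$ and $I_2(label(x_1)) \cup I(G \setminus M_2)$. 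Here, unlike the $2$-pendant cycle, we need \emph{not} include a third ``avoid both'' candidate: any independent set of $C_k$ avoiding both $u_1$ and $u_{2m+2}$ has size at most $\frac{k}{2}-1 < \frac{k}{2}$, and since at least one of the two distinguished vertices is a non-cut vertex (its removal disconnects nothing), the maximum independent set of $G$ that restricts to such a smaller set on the cycle is dominated by one of $I_1 \cup I(G\setminus M_1)$, $I_2 \cup I(G\setminus M_2)$. I would also record that $G \setminus M_1$ and $G \setminus M_2$ are again $SC_k$ graphs (or forests), so the recursion in the main algorithm is well-founded.

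The main obstacle I anticipate is the last point of the $k=4m+2$ case: rigorously ruling out that a \emph{non-maximum} independent set on the cycle could be globally optimal. This requires a careful exchange argument — given a global maximum independent set $I$ whose restriction to $label(x_1)$ has size $\le \frac{k}{2}-1$, one must show that swapping in a shifted alternating set of size $\frac{k}{2}$ on the cycle never destroys independence with the rest of $G$, using that the attaching path meets the cycle only at $u_1$ and $u_{\frac{k}{2}+1}$ and that at least one of these is not a cut vertex, so the ``freed up'' extra cycle vertex has no neighbour outside that would be forced out. Handling the boundary behaviour at $u_1$ and $u_{\frac{k}{2}+1}$ — exactly which of their off-cycle neighbours lie in $G \setminus M_i$ — is the fiddly part, and is where I would spend the most care.
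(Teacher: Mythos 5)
Your route is essentially the paper's: the same split on $k \bmod 4$, the observation that for $k=4m+4$ the even alternating set $\{u_2,u_4,\ldots,u_k\}$ has maximum size $\frac{k}{2}$ and misses both $u_1$ and $u_{\frac{k}{2}+1}$ (so one may commit to it), and for $k=4m+2$ the fact that the only two maximum independent sets of the cycle each contain exactly one of the two attachment vertices, so the recursion branches on $I_1$ and $I_2$; like the paper, you lean on the degree-two property of the interior cycle vertices and on the note that at most one of $u_1,u_{\frac{k}{2}+1}$ is a cut vertex. You actually go further than the paper on the one delicate point: the paper dismisses the need for an ``avoid both'' candidate (the analogue of $I_3$ in Lemma~\ref{x12pendcycle}) with the single phrase ``by the definition of the $(\frac{k}{2}+1)$-pendant cycle'', whereas you correctly isolate this as the step that needs an exchange argument.

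That exchange argument, however, is exactly where your sketch has a gap, and the heuristic you offer for it would not survive as written. You argue that because at least one of $u_1,u_{\frac{k}{2}+1}$ is not a cut vertex, swapping a sub-maximum restriction on the cycle for the alternating set containing that vertex is harmless since ``the freed up extra cycle vertex has no neighbour outside.'' But not being a cut vertex does not mean having no external neighbours: in a $CAGE(p,\frac{k}{2}+1)$ with $p\geq 4$ neither hub is a cut vertex, yet both hubs are adjacent to the end vertices of all the other hub-to-hub paths. So the swap inserts a hub into the solution and may evict several external vertices of the ambient optimum while gaining only one vertex on the cycle; to rule this out you must show that deleting the non-cut hub together with its external neighbourhood decreases the independence number of $G\setminus label(x_1)$ by at most one, which requires invoking the $CAGE$ structure (each such external neighbour is the endpoint of a hub-to-hub path whose interior, for $k=4m+2$, has even length, so its loss can be absorbed by shifting the alternating pattern along that path). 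Your proposal names this as the fiddly part but does not carry it out; in fairness, the paper's own proof does not carry it out either (and both proofs take the ``at most one cut vertex'' note on faith), so modulo that one step your argument is no less complete than the published one.
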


\begin{proof}
The $(\frac{k}{2}+1)$-pendant cycle forms a $CAGE(p,\frac{k}{2}+1)$, $p \geq 3$. It is clear from the definition of $(\frac{k}{2}+1)$-pendant cycle that either $u_1$ is a cut vertex or $u_{\frac{k}{2}+1}$ is a cut vertex but not both and the degree of each vertices in the set $\{u_2,\ldots,u_{\frac{k}{2}},u_{\frac{k}{2}+2},\ldots,u_k\}$ is two. We prove this lemma by partitioning the $k$ into the following two cases: \textbf{Case 1:} $k=4m+4, m \in \mathbb{N}$. The size of the set $I(label(x_1))=\{u_2,u_4,\ldots,u_k\}$ is $\frac{k}{2}$, which is maximum. Moreover, the set does not include $u_1$ and $u_{\frac{k}{2}+1}$ and this concludes the proof of this case. \textbf{Case 2:} $k=4m+2, m \in \mathbb{N}$. The size of both $I_1(label(x_1))$ and $I_2(label(x_1))$ are $\frac{k}{2}$, which is maximum, where $I_1(label(x_1))$ is the set containing $u_1$ and $I_2(label(x_1))$ is the set containing $u_{\frac{k}{2}+1}$. By the definition of $(\frac{k}{2}+1)$-pendant cycle, it is enough to take the maximum of $I_1(label(x_1))\cup I(G\backslash M_1)$ and $I_2(label(x_1))\cup I(G\backslash M_2)$, to get $I(G)$.
$\hfill \qed$
\end{proof}

\begin{theorem}
Let $G$ be an $SC_k$ graph. A maximum independent set can be found in polynomial time. Further, a minimum vertex cover can be computed in polynomial time.
\end{theorem}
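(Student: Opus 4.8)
The plan is to prove that the recurrence $I(G) = I(label(x_1)) \cup I(G\backslash M)$ driving the algorithm is correct and can be evaluated in polynomial time; a minimum vertex cover then follows immediately.

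\emph{Correctness of one peeling step.} Let $\mu=(x_1,\ldots,x_s)$ be a VCO of $G$. If $s=1$, then $G$ is $K_1$ or $C_k$ and a maximum independent set is read off directly. If $s\geq 2$, then $label(x_1)$ is a pendant vertex, a $0$- or $1$-pendant cycle, a $2$-pendant cycle, or a $(\frac{k}{2}+1)$-pendant cycle, and Lemmas \ref{x1pendvertex}, \ref{x10/1pendcycle}, \ref{x12pendcycle} and \ref{x1k/2+1pendcycle} exhibit, via exchange arguments, a maximum independent set of $G$ whose trace on $label(x_1)$ is one of the constantly many prescribed sets $I_j(label(x_1))$. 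Since $M_j=\bigcup_{u\in I_j(label(x_1))}(\{u\}\cup N_G(u))$ deletes all of $I_j(label(x_1))$ together with its entire neighborhood, every independent set of $G\backslash M_j$ extends by $I_j(label(x_1))$ to an independent set of $G$, and conversely; hence $\alpha(G)=\max_j(|I_j(label(x_1))|+\alpha(G\backslash M_j))$, which is precisely the stated recurrence. Moreover $SC_k$ graphs are closed under vertex deletion (an induced cycle of $G\backslash M_j$ is an induced cycle of $G$, hence of length $k$), so $G\backslash M_j$ is again an $SC_k$ graph and the recurrence can be applied to it, with a fresh VCO recomputed in polynomial time by Theorem \ref{thm::sckcons}.

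\emph{Running time.} Each peeling step removes at least the whole block $label(x_1)$, so the process stops after at most $n$ steps, and each set $I_j(label(x_1))$ is given by a closed form of size $O(k)\leq O(n)$. The subtlety, and the step I expect to be the main obstacle, is the branching that appears in the even-length $2$-pendant and $(\frac{k}{2}+1)$-pendant cases: unfolding $\max_j$ naively produces a recursion tree of size $3^{n}$. To avoid this I would organise the computation as dynamic programming over the blocks of $\mu$, processed in the reverse order $x_s, x_{s-1},\ldots,x_1$. Every block is attached to the part of the graph already built through at most two cut vertices --- in the $2$-pendant and $(\frac{k}{2}+1)$-pendant cases these are exactly the two block vertices of degree at least three --- so it suffices to store, for each block, the optimum independent-set size of the assembled subgraph under each of the $O(1)$ membership patterns of its cut vertices. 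Each table entry is computed from $O(1)$ previously stored entries together with a local optimality test that is justified verbatim by Lemmas \ref{x10/1pendcycle}--\ref{x1k/2+1pendcycle}; since $\mu$ has at most $n$ blocks and every transition costs $O(k)$, the table is filled in polynomial time, its root entry equals $\alpha(G)$, and a witnessing maximum independent set is recovered by standard back-tracking.

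\emph{Minimum vertex cover.} By the Gallai identity $\alpha(G)+\tau(G)=|V(G)|$, the complement $V(G)\backslash I(G)$ of the independent set returned above is a minimum vertex cover of $G$, so it is obtained in polynomial time as well. $\hfill \qed$
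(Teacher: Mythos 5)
Your proof is correct and follows essentially the same route as the paper: VCO-based peeling justified by Lemmas \ref{x1pendvertex}--\ref{x1k/2+1pendcycle}, with the minimum vertex cover obtained as the complement of the maximum independent set. The only real difference is that you explicitly handle the branching cases (even-length $2$-pendant and $(\frac{k}{2}+1)$-pendant cycles) via a bottom-up dynamic program over blocks with boundary-vertex states, thereby addressing a running-time subtlety that the paper's one-line proof passes over in silence; this is a refinement of the same argument rather than a different approach.
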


\begin{proof}
The claim follows from \emph{Lemmas \ref{x1pendvertex}-\ref{x1k/2+1pendcycle}} and the fact that VCO can be computed in polynomial time \cite{sadagopan}. A minimum vertex cover for $G$ can be obtained by taking the complement of a maximum independent set of $G$, which can be obtained in polynomial time. Thus, the theorem.  $\hfill \qed$
\end{proof}

%
%

\noindent \textbf{Problem 2} \emph{Minimum Dominating Set}.

Given an $SC_k$ graph $G$, $k\geq 5$, the objective is to find a vertex subset $S$ of $G$ with minimum cardinality such that for every $v \in V(G)$, either $v \in S$ or $v\in N_G(x)$ for some $x\in S$. 

\noindent The algorithm for a minimum dominating set: Start by finding the VCO for a given $SC_k$ graph $G$, say $\mu = (x_1,\ldots, x_s)$, $1 \leq s \leq n$. Now, find the minimum dominating set for the first element in the ordering. This immediately suggests us to remove the chosen vertices along with its neighbors from $G$ and we recursively compute the dominating set.
\begin{equation}
\nonumber
D(G) = D(label(x_1)) \cup D(G\backslash M)
\end{equation}

\noindent  where $D(G)$ denotes a dominating set of $G$ with minimum size and $M= \bigcup\limits_{u\in D(label(x_1))} (\{u\} \cup N_G(u))$\\

\noindent \textbf{Computing $D(label(x_1))$:}
\begin{lemma}
\label{dx1pendvertex}
$D(label(x_1)) = \{v\}$ if $label(x_1)=\{u\}$ is a pendant vertex and $N_G(u)=\{v\}$.
\end{lemma}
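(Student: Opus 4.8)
The plan is to prove Lemma~\ref{dx1pendvertex} by the same contradiction-and-exchange template that underlies \emph{Lemma~\ref{x1pendvertex}}, adapted from independence to domination. The starting point is the observation that a pendant vertex $u$ with $N_G(u)=\{v\}$ has closed neighborhood $\{u\}\cup N_G(u)=\{u,v\}$, so in order to dominate $u$ every dominating set of $G$ must contain $u$ or $v$. If $v$ is itself a pendant vertex then, since $G$ is connected, $G=K_2$ and $\{v\}$ is already a minimum dominating set, so the statement holds trivially; hence I may assume $d_G(v)\geq 2$.

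For the main step I would argue by contradiction: suppose that no minimum dominating set of $G$ contains $v$, and let $D$ be a minimum dominating set of $G$. By the observation above $u\in D$, and by the supposition $v\notin D$. Consider $D'=(D\backslash\{u\})\cup\{v\}$. The only vertices dominated by $u$ in $D$ are $u$ and $v$ themselves, and both lie in $\{v\}\cup N_G(v)$, so they are dominated by $v$ in $D'$; every vertex dominated by $D\backslash\{u\}$ is clearly still dominated by $D'$. Hence $D'$ is a dominating set of $G$ with $|D'|\leq|D|$, and minimality of $D$ forces $|D'|=|D|$, so $D'$ is a minimum dominating set of $G$ containing $v$, contradicting the supposition. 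Therefore some minimum dominating set of $G$ contains $v$, which is precisely what the assignment $D(label(x_1))=\{v\}$ records before the algorithm deletes $M=\{v\}\cup N_G(v)$ and recurses on $G\backslash M$.

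The step I expect to be the genuine obstacle is not this local exchange but reconciling the commitment with the recursion $D(G)=D(label(x_1))\cup D(G\backslash M)$, that is, verifying that a minimum dominating set of the residual graph $G\backslash M$ together with $\{v\}$ rebuilds a minimum dominating set of $G$, so that $|D(G)|=1+|D(G\backslash M)|$. The easy direction $|D(G)|\leq 1+|D(G\backslash M)|$ is immediate, since any dominating set of $G\backslash M$ together with $v$ dominates $G$; the reverse inequality is the delicate one, because a neighbor of $v$ could, inside $G$, simultaneously be the unique dominator of several vertices that become isolated in $G\backslash M$. I would control this using the fact that $x_1$ is the \emph{first} element of the VCO, together with the $SC_k$ construction rules of Theorem~\ref{thm::sckcons}, which restrict how $v$ and its neighborhood can attach to the remainder of $G$; converting that structural restriction into the required counting bound is the part that will need the most care.
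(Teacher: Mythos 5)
Your proof is correct and is essentially a rigorous version of the paper's own argument: the paper merely observes that $u$ must be dominated by $u$ or $v$ and that choosing $v$ ``dominates more,'' which is exactly the exchange $D'=(D\setminus\{u\})\cup\{v\}$ you carry out (plus the trivial $K_2$ case). The recursion-consistency issue you flag at the end lies outside the lemma's statement, and the paper's proof does not address it either.
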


\begin{proof}
The pendant vertex $u$ can be dominated either by choosing its neighbor $v$ or by choosing the vertex $u$ itself. By choosing $v$, we can dominate more vertices in $G$, which helps us to minimize the size of the dominating set for $G$.
$\hfill \qed$
\end{proof}

\begin{lemma}
\label{dx10/1pendcycle}
Let $label(x_1)=\{u_1,\ldots,u_k\}$ be the 0-pendant cycle (or 1-pendant cycle) where $deg_G(u_1)\geq 3$, $\{u_1,u_k\} \in E(G)$ and $\{u_i,u_{i+1}\}\in E(G), 1 \leq i \leq k-1$. Then $D(label(x_1)) = \{u_1, u_4, u_7, \ldots, u_{p}\}$ where $k-3<p\leq k$.
\end{lemma}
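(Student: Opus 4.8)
The statement to prove is Lemma~\ref{dx10/1pendcycle}: when $label(x_1)=\{u_1,\dots,u_k\}$ is a $0$-pendant (or $1$-pendant) cycle with $deg_G(u_1)\ge 3$ and all other $u_i$ of degree $2$, a minimum dominating set of $G$ restricted to this cycle can be taken to be $\{u_1,u_4,u_7,\dots,u_p\}$ with $k-3<p\le k$. The plan is to argue in two stages: first, a cardinality lower bound on how many vertices are needed to dominate $C_k$; second, a ``no loss of optimality'' argument showing that forcing $u_1$ into the dominating set and then greedily covering the path $u_2,\dots,u_k$ at distance-$3$ spacing does not hurt the global optimum $D(G)$.

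First I would recall the standard fact that the domination number of the cycle $C_k$ is $\lceil k/3\rceil$, and observe that the set $\{u_1,u_4,u_7,\dots,u_p\}$ described has exactly $\lceil k/3\rceil$ elements: starting at $u_1$ and stepping by $3$, the last index $p$ lands in $\{k-2,k-1,k\}$ depending on $k \bmod 3$ (these are precisely the indices satisfying $k-3<p\le k$), and one checks the wrap-around edge $\{u_1,u_k\}$ is covered because $u_1$ itself is chosen and $u_k\in N_G(u_1)$ or is within distance $1$ of $u_p$. Hence the proposed set dominates all of $\{u_2,\dots,u_k\}$ (the only vertices of the cycle not adjacent to the rest of $G$), using the minimum possible number of cycle-vertices.

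Second I would handle the ``it does not affect the minimality of $D(G)$'' part, in the same spirit as the proof of Lemma~\ref{dx1pendvertex} and the MIS lemmas. Since every vertex in $\{u_2,\dots,u_{k-1},u_k\}\setminus N_G(u_1)$ has degree $2$ inside $G$ and no neighbours outside the cycle, any dominating set of $G$ must spend at least $\lceil k/3\rceil - O(1)$ vertices ``inside or adjacent to'' this cycle regardless of the rest of the graph; choosing $u_1$ is always at least as good as choosing either of its cycle-neighbours $u_2,u_k$, because $deg_G(u_1)\ge 3$ means $u_1$ dominates at least one extra vertex lying in $G\backslash label(x_1)$, so picking $u_1$ can only help cover the attachment point(s) and reduce what $D(G\backslash M)$ must still dominate. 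Thus there is an optimal dominating set of $G$ containing $\{u_1,u_4,\dots,u_p\}$, and the recurrence $D(G)=D(label(x_1))\cup D(G\backslash M)$ with $M=\bigcup_{u\in D(label(x_1))}(\{u\}\cup N_G(u))$ is valid; by induction on $|\mu|$ (the base cases being $K_1$ and $C_k$, which are immediate) this yields a minimum dominating set for $G$.

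The main obstacle I anticipate is the exchange/optimality argument rather than the counting: one must be careful that ``pushing'' a dominator from $u_2$ or $u_k$ onto $u_1$ in an arbitrary optimal solution genuinely does not increase the size, and that the leftover graph $G\backslash M$ after removing $u_1$ together with its (possibly two, in the $1$-pendant case) attachment vertices is still an $SC_k$ graph lower in the VCO, so the induction hypothesis applies. I would also need to treat the boundary residues $k\equiv 0,1,2\pmod 3$ uniformly via the single condition $k-3<p\le k$, and double-check the edge case where the cycle wrap-around forces $u_p$ to be within distance $1$ of $u_1$ so that no vertex of the cycle is left undominated.
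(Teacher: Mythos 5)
Your proof follows the same route as the paper's: first check that $\{u_1,u_4,\dots,u_p\}$ attains the domination number $\lceil k/3\rceil$ of $C_k$, then argue that forcing the attachment vertex $u_1$ (the only vertex of the cycle with $deg_G(u_1)\geq 3$) into the set cannot hurt the global optimum $D(G)$. The paper's own justification of this second step is even terser than yours (it merely notes that $D(label(x_1))$ contains $u_1$), so your expanded exchange/induction argument is simply a more detailed rendering of the same idea.
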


\begin{proof}
It is clear that, the minimum size of a dominating set of a cycle $C_k$ is $\lceil \frac{k}{3} \rceil$. The cardinality of the given set $D(label(x_1))$ is $\lceil \frac{k}{3} \rceil$. Thus, $D(label(x_1))$ is the minimum dominating set of $x_1$ and the set does not affect the minimality of $D(G)$ as $D(label(x_1))$ contains $u_1$. This completes the proof of the lemma.
$\hfill \qed$
\end{proof}

\begin{lemma}
\label{dx12pendcycle}
Let $label(x_1)=\{u_1,\ldots,u_k\}$ be the 2-pendant cycle where $\{u_1,u_k\} \in E(G)$ and $\{u_i,u_{i+1}\}\in E(G), 1 \leq i \leq k-1$, $deg_G(u_1)\geq 3$ and $deg_G(u_2)\geq 3$. Then $D(G) = \min\limits_{i=1,2} \{D_i(label(x_1))\cup D(G\backslash M_1)\}$, where $D_1(label(x_1)) = \{u_1,u_4,\ldots,u_{p}\}$, $D_2(label(x_1)) = \{u_2,u_5,\ldots,u_{p'}\}$, $M_i=\bigcup\limits_{u \in D_i(label(x_1))}(\{u\} \cup N_G(u))$, $i \in \{1,2\}$, $k-3<p, p' \leq k$.
\end{lemma}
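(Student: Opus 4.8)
The plan is to follow Lemmas \ref{dx1pendvertex}--\ref{dx10/1pendcycle}: show that an optimum dominating set of $G$ splits cleanly across the $2$-pendant cycle (whose vertex set I denote $U:=\{u_1,\dots,u_k\}$) and the remaining graph, and then pin down which dominating sets of the induced $C_k$ may safely be charged to $label(x_1)$. First I would record the structure forced by condition (v) of Theorem \ref{thm::sckcons}: in $G$ each of $u_3,\dots,u_k$ has degree exactly $2$ with both neighbours in $U$, while $u_1$ and $u_2$ are adjacent and are the only vertices of $U$ with a neighbour outside $U$. Consequently, in any dominating set $D$ of $G$ the vertices that dominate $u_3,\dots,u_k$ all lie in $U$, so $D\cap U$ (enlarged if necessary to also cover $u_1,u_2$) is a dominating set of the induced $C_k$ and hence has at least $\lceil k/3\rceil$ vertices.

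For the ``$\le$'' direction I would check, by a short verification using the cyclic structure, that for $i=1,2$ the set $D_i(label(x_1))$ is a dominating set of the induced $C_k$ of size exactly $\lceil k/3\rceil$: the index $p$ (resp.\ $p'$) is the last one reached from $u_1$ (resp.\ $u_2$) by steps of $3$, so $k-3<p,p'\le k$, with its exact value depending only on $k\bmod 3$. Since every vertex of $M_i$ is dominated by $D_i(label(x_1))$, the set $D_i(label(x_1))\cup D(G\setminus M_i)$ is a dominating set of $G$; taking the better of $i=1$ and $i=2$ gives $|D(G)|\le\min_{i=1,2}\bigl(|D_i(label(x_1))|+|D(G\setminus M_i)|\bigr)$.

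The reverse inequality is where the real work lies. Given an optimum $D^{*}$, set $A=D^{*}\cap U$; since $A$ must dominate $u_3,\dots,u_k$ inside the cycle, it is essentially a minimum dominating set of $C_k$. The idea is an exchange argument: rotate $A$ around the cycle to a minimum dominating set that contains one of the cut vertices $u_1,u_2$ -- this is possible because $C_k$ is vertex-transitive, and harmless because $|A|$ is unchanged while a cut vertex in the cycle part can only help to dominate the vertices outside $U$. After this normalisation $A$ equals $D_1(label(x_1))$ or $D_2(label(x_1))$ up to where the ``slack'' vertex sits, $D^{*}\setminus A$ dominates $G\setminus M_i$, and $|D^{*}|\ge|D_i(label(x_1))|+|D(G\setminus M_i)|$ follows. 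The step I expect to be the main obstacle is exactly the justification that $D_1$ and $D_2$ exhaust the optimal configurations: when a minimum dominating set of $C_k$ can already contain both $u_1$ and $u_2$ (which happens precisely for $k\equiv 1\pmod 3$), an optimum may need both cut vertices inside the cycle part, so that neither $M_1$ nor $M_2$ absorbs the other cut vertex's external neighbours and the two-way minimum overshoots. Closing this gap requires either showing such a configuration is never strictly better, or -- in the spirit of the extra term $I_3$ used for even $k$ in Lemma \ref{x12pendcycle} -- adding a third candidate $D_3(label(x_1))\supseteq\{u_1,u_2\}$ of size $\lceil k/3\rceil$ and minimising over all three.
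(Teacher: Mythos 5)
Your structural setup and the upper-bound direction are fine, and they are already more than the paper provides: the paper's own proof of Lemma \ref{dx12pendcycle} consists solely of the remark that $|D_1(label(x_1))|=|D_2(label(x_1))|=\lceil k/3\rceil=\gamma(C_k)$ and the assertion that taking the minimum over $i=1,2$ ``concludes'' the argument; the lower bound, and in particular the configuration you single out, is never addressed there. The obstacle you flagged is therefore genuine, and in fact it defeats the lemma as stated rather than just your rotation argument. Take $k=7$ and let $G$ be the $7$-cycle $u_1u_2\cdots u_7$ together with a pendant vertex $a$ at $u_1$ and a pendant vertex $b$ at $u_2$ (build the path $a\,u_1\,u_2\,b$ first and then attach the cycle to the edge $\{u_1,u_2\}$ by operation (v) of Theorem \ref{thm::sckcons}, so a VCO may legitimately begin with this $2$-pendant cycle, and $deg_G(u_1)=deg_G(u_2)=3$). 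Here $\{u_1,u_2,u_5\}$ dominates $G$, so $\gamma(G)=3$, while both branches of the stated recursion cost $4$: $D_1=\{u_1,u_4,u_7\}$ leaves $b$ isolated in $G\setminus M_1$, and any minimum dominating set of the cycle through $u_2$ that avoids $u_1$ leaves $a$ isolated in $G\setminus M_2$. (As a side remark, the literal set $D_2=\{u_2,u_5,\ldots,u_{p'}\}$ taken in steps of three does not even dominate $u_7$ when $k\equiv 1\pmod 3$, so its endpoint must be adjusted before one can claim $|D_2|=\lceil k/3\rceil$.)

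So your diagnosis is exactly right: for $k\equiv 1\pmod 3$ a minimum dominating set of $C_k$ may contain the adjacent pair $u_1,u_2$, and using both cut vertices can be strictly cheaper because each then dominates its own external neighbours; in particular, the first option you mention (proving such a configuration is never strictly better) is unavailable, since the example above shows it can be. Your second option is the correct repair: add a third candidate $D_3(label(x_1))\supseteq\{u_1,u_2\}$ of size $\lceil k/3\rceil$ (it exists precisely when $k\equiv 1\pmod 3$) and minimise over all three branches, in the same way Lemma \ref{x12pendcycle} adds $I_3$ for even $k$; in the example, $D_3=\{u_1,u_2,u_5\}$ gives $G\setminus M_3=\emptyset$ and total cost $3$. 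With that third branch your exchange argument can be made to close, but state it carefully: $D^{*}\cap U$ need only dominate $u_3,\ldots,u_k$, since $u_1$ and $u_2$ may be dominated by vertices outside $U$, so it can be smaller than $\gamma(C_k)$, and the normalisation must account for the external vertices that then pay for $u_1,u_2$. Only $u_1$ and $u_2$ have neighbours outside $U$, so after this bookkeeping every optimum can be charged to one of $D_1,D_2,D_3$ without increasing its size.
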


\begin{proof}
The size of both the sets $D_1(label(x_1))$ and $D_2(label(x_1))$ are $\lceil\frac{k}{3}\rceil$, which is the minimum dominating set in a cycle of length $k$. In order to get the minimum dominating set for $G$, the minimum is taken over $D_i(label(x_1))\cup D(G\backslash M_i)$, $i=1,2$, and the conclusion follows.
$\hfill \qed$
\end{proof}

\begin{lemma}
\label{dx1k/2+1pendcycle}
Let $label(x_1)=\{u_1,\ldots,u_k\}$ be the $(\frac{k}{2}+1)$-pendant cycle where $\{u_1,u_k\}\in E(G)$, $\{u_i,u_{i+1}\}\in E(G), 1 \leq i \leq k-1$,$deg_G(u_1)\geq 3$ and $deg_G(u_{\frac{k}{2}+1})\geq 3$. Then $D(G) = \min\limits_{i=1,2} \{D_i(label(x_1))\cup I(G\backslash M_i)\}$ where $D_1(x_1)$ is the minimum dominating set for $x_1$ including $u_1$, $D_2(x_1)$ is the minimum dominating set for $x_1$ including $u_{\frac{k}{2}+1}$, $M_i=\bigcup\limits_{u \in D_i(label(x_1))}(\{u\} \cup N_G(u))$, $i \in \{1,2\}$.
\end{lemma}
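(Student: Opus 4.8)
The plan is to follow the pattern of Lemmas \ref{x1k/2+1pendcycle} and \ref{dx12pendcycle}, adapting the exchange argument from independent sets to domination, and to lean on the recursion $D(G)=D(label(x_1))\cup D(G\backslash M)$ already set up above; so the task reduces to showing that the correct choice of $D(label(x_1))$ for a $(\frac{k}{2}+1)$-pendant cycle is the better of $D_1(label(x_1))$ and $D_2(label(x_1))$. (We read the $I(G\backslash M_i)$ in the statement as $D(G\backslash M_i)$.) First I would reuse the structural facts from the proof of Lemma \ref{x1k/2+1pendcycle}: $label(x_1)$ induces a cycle $C_k$ that together with the later VCO steps forms a $CAGE(p,\frac{k}{2}+1)$, $p\geq 3$; exactly one of $u_1,u_{\frac{k}{2}+1}$ is a cut vertex; and every vertex of $label(x_1)$ other than $u_1$ and $u_{\frac{k}{2}+1}$ has degree exactly $2$ in $G$ with both its neighbours in $label(x_1)$. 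I would also recall that the domination number of $C_k$ is $\lceil \frac{k}{3} \rceil$ and that there is a minimum dominating set of $C_k$ through any single prescribed vertex; hence $D_1(label(x_1))$ and $D_2(label(x_1))$ exist, each of size $\lceil \frac{k}{3} \rceil$, and each $M_i$ is the closed neighbourhood in $G$ of $D_i(label(x_1))$.

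For the inequality $|D(G)|\leq\min\limits_{i=1,2}\,(|D_i(label(x_1))|+|D(G\backslash M_i)|)$ I would just check that $D_i(label(x_1))\cup D(G\backslash M_i)$ dominates $G$: every vertex of $M_i$, in particular every vertex of $label(x_1)$, is dominated by $D_i(label(x_1))$, and every other vertex is dominated by $D(G\backslash M_i)$ by induction along the VCO.

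For the reverse inequality I would take a minimum dominating set $D^{\ast}$ of $G$ and analyse $D^{\ast}\cap label(x_1)$. Because $u_2,\ldots,u_{\frac{k}{2}},u_{\frac{k}{2}+2},\ldots,u_k$ have degree $2$ with neighbours inside $label(x_1)$, these vertices can only be dominated from within $label(x_1)$; moreover $u_1$ and $u_{\frac{k}{2}+1}$ are the only vertices of $label(x_1)$ adjacent to $V(G)\backslash label(x_1)$. From these two observations I would produce a minimum dominating set $D'$ of $G$ with $D'\cap label(x_1)$ equal to a minimum dominating set of $C_k$ containing $u_1$ or $u_{\frac{k}{2}+1}$: replace $D^{\ast}\cap label(x_1)$ by the appropriate rotated ``every third vertex'' set, which costs nothing since $D^{\ast}\cap label(x_1)$ must already have size essentially $\lceil \frac{k}{3} \rceil$, and keep whichever of $u_1,u_{\frac{k}{2}+1}$ was helping dominate the outside so that no outside vertex is lost. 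Then $D'=D_i(label(x_1))\cup(D'\backslash label(x_1))$ with $D'\backslash label(x_1)$ dominating $G\backslash M_i$ for $i=1$ or $2$, giving $|D^{\ast}|=|D'|\geq|D_i(label(x_1))|+|D(G\backslash M_i)|$, hence equality with the first bound.

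The main obstacle, I expect, is making the exchange step airtight. Two borderline scenarios must be ruled out: (a) $D^{\ast}$ might dominate both $u_1$ and $u_{\frac{k}{2}+1}$ from outside and use strictly fewer than $\lceil \frac{k}{3} \rceil$ vertices of $C_k$; (b) $D^{\ast}$ might deliberately use a non-minimum dominating set of $C_k$ that contains both $u_1$ and $u_{\frac{k}{2}+1}$ in order to pull more of $V(G)\backslash label(x_1)$ into its closed neighbourhood. In each case one must argue, via the $CAGE(p,\frac{k}{2}+1)$ structure (the outside neighbours of $u_1$ and of $u_{\frac{k}{2}+1}$ interact with the $\geq 3$ parallel paths) together with the ``exactly one cut vertex'' property, that the choice never beats $\min_{i=1,2}(|D_i(label(x_1))|+|D(G\backslash M_i)|)$; this case analysis, rather than the easy inclusion or the bookkeeping, is the delicate part.
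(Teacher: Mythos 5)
Your overall route coincides with the paper's: the paper disposes of this lemma by saying the argument of \emph{Lemma \ref{dx12pendcycle}} applies, i.e.\ both $D_1(label(x_1))$ and $D_2(label(x_1))$ have the minimum possible size $\lceil\frac{k}{3}\rceil$ for a dominating set of $C_k$, and one takes the minimum over the two branches of the recursion; your reading of $I(G\backslash M_i)$ as $D(G\backslash M_i)$ and your forward inclusion (that $D_i(label(x_1))\cup D(G\backslash M_i)$ dominates $G$) fit that scheme.

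The gap is that the decisive step of your argument, the reverse inequality obtained by an exchange on a minimum dominating set $D^{\ast}$ of $G$, is only announced, not carried out: you yourself label the two borderline scenarios as the delicate part and leave them open, so what you have is a plan rather than a proof. Moreover, your scenario (a) is not vacuous. The interior of $label(x_1)$ consists of two paths on $\frac{k}{2}-1$ vertices each, and these can sometimes be dominated from inside by fewer than $\lceil\frac{k}{3}\rceil$ vertices provided $u_1$ and $u_{\frac{k}{2}+1}$ are covered from outside; e.g.\ for $k=8$ the interior paths $(u_2,u_3,u_4)$ and $(u_6,u_7,u_8)$ are dominated by $\{u_3,u_7\}$, which is $2<\lceil\frac{8}{3}\rceil=3$ vertices, with $u_1,u_5$ left to outside neighbours. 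So one genuinely has to argue, using the cage structure and the fact that exactly one of $u_1,u_{\frac{k}{2}+1}$ is a cut vertex, that the outside vertices spent on covering $u_1$ and $u_{\frac{k}{2}+1}$ can be re-accounted so that such a configuration never beats $\min_{i=1,2}\bigl(|D_i(label(x_1))|+|D(G\backslash M_i)|\bigr)$; until that exchange is made precise the lower bound is missing. (The paper's own proof is a one-line appeal to \emph{Lemma \ref{dx12pendcycle}} and does not address this point either, but as a standalone argument your write-up stops exactly where the real work begins.)
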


\begin{proof}
The argument similar to \emph{Lemma \ref{dx12pendcycle}} establishes the claim. $\hfill \qed$
\end{proof}

Thus, we get a polynomial-time algorithm to find a minimum dominating set using \emph{Lemmas \ref{dx1pendvertex}-\ref{dx1k/2+1pendcycle}}. \\

\noindent \textbf{Problem 3} \emph{Odd Cycle Transversal}.

Given an $SC_k$ graph $G$, $k\geq 5$, the objective is to find a vertex subset $S$ of $G$ with minimum cardinality such that $G\backslash S$ is a bipartite graph (every induced cycle is even). Since the $SC_k$ graphs does not contain a odd cycle when $k$ is even, the set $S$ is empty in this case. Hence, our problem is to find the set $S$ for $SC_{2k+1}$ graph, $k \geq 1$. Let $\mu=(x_1,\ldots,x_s)$, $1 \leq s \leq n$, be the VCO of $G$. Thus, the recursive solution is:\\ 

$OCT(G)=\begin{cases}
OCT(G\backslash \{label(x_1)\}) & \text{if }label(x_1) \text{ is a pendant vertex}\\
\{u\} \cup OCT(G\backslash \{label(x_1)\}) & \text{if }label(x_1) \text{ is a 0(1)-
pendant cycle }\\
& \text{where }deg_G(u)\geq 3, u \in label(x_1)\\
\min\{\{u\} \cup OCT(G\backslash \{label(x_1)\}), & \text{if }x_1 \text{ is a 2-pendant cycle where }\\
~~~~~~~ \{v\} \cup OCT(G\backslash \{label(x_1)\})\} & \{u,v\}\in E(G) \text{ and, } deg_G(u)\geq 3 \\
& \text{ and }deg_G(v)\geq 3, u,v \in label(x_1)
\end{cases}$
\\ 
where, $OCT(G)$ is the required set $S$.\\

\noindent \textbf{Problem 4} \emph{Even Cycle Transversal}.

Given an $SC_k$ graph $G$, $k\geq 5$, the objective is to find a vertex subset $S$ of $G$ with minimum cardinality such that $G\backslash S$ is a graph where every induced cycle is of odd length. Since the $SC_k$ graphs does not contain an even cycle when $k$ is odd, the set $S$ is empty in this case. Let $\mu=(x_1,\ldots,x_s)$, $1 \leq s \leq n$, be the VCO of $G$. Thus, the recursive solution is:\\

$ECT(G)=\begin{cases}
ECT(G\backslash \{label(x_1)\}) & \text{if }label(x_1) \text{ is a pendant vertex}\\
\{u\} \cup ECT(G\backslash \{label(x_1)\}) & \text{if }label(x_1) \text{ is a 0(1)-
pendant cycle where }\\
& deg_G(u)\geq 3, u \in label(x_1)\\
\min\{\{u\} \cup ECT(G\backslash \{label(x_1)\}),  & \text{if }label(x_1) \text{ is a 2-pendant cycle where }\\
~~~~~~~ \{v\} \cup ECT(G\backslash \{label(x_1)\})\} & \{u,v\}\in E(G) \text{ and, } deg_G(u)\geq 3 \\
& \text{ and }deg_G(v)\geq 3, u,v \in label(x_1) \\
\min\{\{u\} \cup ECT(G\backslash \{label(x_1)\}), & \text{if }label(x_1) \text{ is a }(\frac{k}{2}+1)\text{-pendant cycle where }\\
~~~~~~~\{w\} \cup ECT(G\backslash \{label(x_1)\})\} & deg_G(u)\geq 3 \text{ and }deg_G(w)\geq 3, u,w\in label(x_1)
\end{cases}$
\\ 
where, $ECT(G)$ is the required set $S$.

\begin{theorem}
$OCT(G)$ and $ECT(G)$ yield an optimum OCT and ECT, respectively. 
\end{theorem}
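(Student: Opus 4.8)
The plan is to argue by induction on $s$, the length of the VCO $\mu = (x_1,\dots,x_s)$, handling $OCT$ and $ECT$ in parallel since the two recurrences have the same shape. The base case $s = 1$ is immediate: $label(x_1)$ is either a single vertex (no cycle, so the transversal is empty) or a single $C_k$. When $k$ is even, $C_k$ has no odd induced cycle, so $OCT = \emptyset$; when $k$ is odd it has no even induced cycle, so $ECT = \emptyset$; and the one nontrivial base instance ($C_k$ with $k$ odd for $OCT$, or $k$ even for $ECT$) needs a single vertex, which the recurrence supplies. The inductive step removes $label(x_1)$ according to its pendant type, applies the induction hypothesis to $G' = G \setminus label(x_1)$ (which is again an $SC_k$ graph by Theorem~\ref{thm::sckcons}, with VCO $(x_2,\dots,x_s)$), and argues that the few vertices added in the recurrence are both sufficient and necessary.

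The key structural fact to isolate first is: \emph{which induced cycles of $G$ pass through $label(x_1)$, and through which vertices of it}. For a pendant vertex this set is empty, so $G$ and $G'$ have the same induced cycles and adding nothing is correct. For a $0$- or $1$-pendant cycle $S = \{u_1,\dots,u_k\}$ with $deg_G(u_1)\ge 3$, the only induced cycle meeting $S\setminus\{u_1\}$ is $S$ itself (the other $u_i$ have degree two in $G$), and $S$ has length $k$; hence for $OCT$ with $k$ odd (resp. $ECT$ with $k$ even) we must delete one vertex of $S$, and deleting $u_1$ is optimal because $u_1$ is the only vertex of $S$ that can participate in any \emph{other} cycle of $G$ — any solution deleting some $u_i$, $i\ge 2$, can be modified to delete $u_1$ instead without increasing size and without losing coverage of cycles through $u_1$. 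For a $2$-pendant cycle the deleted endpoint lies among the two high-degree vertices $u,v$ (adjacent in $G$), and taking the min over the two choices is correct by the same replacement argument: any optimal solution hitting $S$ either contains $u$ or $v$, or contains some degree-two $u_i$, and in the last case we can swap $u_i$ for whichever of $u,v$ better serves the rest of $G$; the two branches of the recurrence enumerate exactly these irredundant options. For $ECT$ with a $(\tfrac{k}{2}+1)$-pendant cycle, the analogous statement uses the definition-level fact (noted after Lemma~\ref{x1k/2+1pendcycle}'s context) that exactly one of $u_1, u_{k/2+1}$ is a cut vertex; the two branches correspond to deleting $u$ vs.\ $w$ from the two ``ends'' of the $CAGE$.

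The step I expect to be the main obstacle is the \emph{necessity / exchange argument} for the branching cases — proving that restricting the deleted vertex of the pendant cycle to the one or two high-degree vertices loses no optimality. Concretely, one must show that if $T$ is an optimal $OCT$ (resp.\ $ECT$) of $G$ and $T\cap label(x_1) \ne \emptyset$ but $T$ contains no high-degree vertex of that cycle, then replacing any $u_i\in T\cap label(x_1)$ by a high-degree vertex still hits every offending induced cycle — this is clear for the cycle $S$ itself, but one must also verify no \emph{newly created} even/odd induced cycle in $G\setminus T'$ arises, which follows because all cycles of $G$ through $label(x_1)$ other than $S$ must pass through a high-degree (cut) vertex of $S$. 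A secondary subtlety is confirming that deletion of $label(x_1)$ truly does not create or destroy induced cycles elsewhere in $G'$, i.e., that $G'$'s induced cycles are precisely those of $G$ disjoint from $label(x_1)$; this again rests on Theorem~\ref{thm::sckcons} and the degree structure of pendant cycles, and should be stated as a short lemma and reused verbatim across all four problems. Once these are in hand, correctness of both recurrences, and their polynomial running time (at most $n$ iterations, each doing $O(n+m)$ work plus a bounded branch factor), follow directly.
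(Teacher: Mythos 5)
Your overall plan (induction along the VCO, a case analysis by the type of $label(x_1)$, and an exchange argument for the branching cases) is the argument the paper only gestures at — its proof is a one-line appeal to the dominating-set lemmas — so in spirit you are on the paper's route. However, there is a concrete gap in your inductive step for the two-branch cases (2-pendant and $(\frac{k}{2}+1)$-pendant cycles), and it sits exactly at the point you flagged as the main obstacle. The recurrence removes \emph{all} of $label(x_1)$, including \emph{both} high-degree attachment vertices, while placing only one of them in the solution. Your justification — ``all cycles of $G$ through $label(x_1)$ other than $S$ must pass through a high-degree (cut) vertex of $S$'' — is true but does not close the argument: an induced cycle of $G$ that meets $label(x_1)$ only in the \emph{unchosen} high-degree vertex is neither hit by the chosen vertex nor present in $G\backslash label(x_1)$, so it is invisible to the recursive call and unhit by the returned set. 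Your exchange argument only shows that \emph{some} optimum contains $u$ or $v$; it does not show that $\{u\}\cup OCT(G\backslash label(x_1))$ (or the $v$-branch) is even feasible, which is what the induction needs.

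To see that this is not a formal quibble, take $k=5$: let $A$ be a $5$-cycle through $u_1$, attach $u_2$ as a pendant vertex at $u_1$, attach a $5$-cycle $B$ as a 1-pendant cycle at $u_2$, and finally attach a 2-pendant path to the edge $\{u_1,u_2\}$ (operation (v) of Theorem~\ref{thm::sckcons}), creating the pendant cycle $S$. This is an $SC_5$ graph whose induced cycles are exactly $A$, $B$, $S$; since $A$ and $B$ are vertex-disjoint, any OCT has size at least $2$ (and $\{u_1,u_2\}$ achieves it). Yet with the legitimate VCO that lists $S$ first, both branches of the recurrence return a single vertex, and neither $\{u_1\}$ nor $\{u_2\}$ hits both $A$ and $B$. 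So the inductive step as you state it fails; to repair it you must either delete only the degree-two path vertices of the pendant cycle (keeping the unchosen attachment vertex in the residual instance), or prove that the VCO can always be chosen so that this configuration does not occur, or otherwise account in the recursion for the cycles through the unchosen attachment vertex. The same caveat applies to your $(\frac{k}{2}+1)$-pendant case for ECT. (The paper's own proof does not address this either, so this is a place where faithfully filling in the details exposes a real issue rather than a shortcoming unique to your write-up; the one-attachment-vertex cases — pendant vertices and 0/1-pendant cycles — are fine as you argue them.)
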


\begin{proof}
Arguments similar to \emph{Lemmas \ref{dx1pendvertex}-\ref{dx1k/2+1pendcycle}} establishes this claim and thus, $OCT(G)$ and $ECT(G)$ can be computed in polynomial time. $\hfill \qed$
\end{proof}

\noindent \textbf{Problem 5} \emph{Feedback Vertex Set}.

Given an $SC_k$ graph $G$, $k\geq 5$, the objective is to find a vertex subset $S$ of $G$ with minimum cardinality such that $G\backslash S$ is a forest. It is easy to see that FVS is precisely OCT when $k$ is odd, and ECT when $k$ is even. Thus, FVS can be computed in polynomial time.\\
%

\noindent \textbf{Problem 6} \emph{Steiner Tree}.

Given an $SC_k$ graph $G$, $k\geq 5$, and a terminal set $R\subseteq V(G)$, Steiner tree asks for a tree $T$ spanning the terminal set. The objective is to minimize the number of additional vertices ($S\subseteq V(G)\backslash R$, also known as Steiner vertices).

\begin{definition}
Let $S_i$ be the $s$-pendant cycle in $G$ such that there exist a cycle $S_j$ in $G$, where either $\vert E(S_i) \cap E(S_j) \vert = 0$ or $s-1$ or $\vert V(S_i) \cap V(S_j) \vert = s$. Let $R = V(S_i) \backslash (V(S_i) \cap V(S_j))$. The \emph{removal of a s-pendant cycle} $S_i$ from $G$ yields the induced subgraph $G \backslash R$. Note that for each $S_i$, there is a corresponding $R$ and $G\backslash S_i$ corresponds to the graph $G\backslash R$. 
\end{definition}

\noindent We now present an algorithm to find a minimum Steiner Set.

\begin{itemize}
\item[1.] Remove all the pendant vertices and pendant cycles which do not contain any terminal vertex and update $G$. Return $G$ if $G$ is acyclic.
\item[2.] Let $\mu = (x_1,\ldots, x_s)$, $1 \leq s \leq n$, be the VCO of $G$
\item[3.] Find a Steiner set $S'$ for $label(x_1)$. Add $S'$ to $S$. A desired vertex $x'$ for the $label(x_1)$ is added to $R$.
\item[4.] Remove $label(x_1)$ from $G$ and let the resulting graph be $G'$.
\item[5.] Update $\mu$ and repeat \emph{Steps 1-4}.   
\end{itemize}

Let $ST(G,R)$ denote the vertex set of Steiner tree $T$ which spans $R\subseteq V(G)$ with a minimum number of Steiner vertices.

\begin{equation}
\nonumber
ST(G,R) = ST(G, (R \cap label(x_1))\cup \{x'\}) \cup ST(G, (R\backslash label(x_1))\cup \{x'\})
\end{equation}


\noindent \textbf{Computing $ST(G, (R \cap label(x_1))\cup \{x'\})$:}
\begin{lemma}
\label{sx1pendvertex}
If $label(x_1)=\{u\}$ is a pendant vertex, then $x' = v$ and $ST(G, (R \cap label(x_1))\cup \{x'\}) = V(P_{uv})$, where $v$ is the vertex of some $C_k$ in $G$ and the first vertex of $deg_G(v)\geq 3$ in a path from $u$ in $G$. 
\end{lemma}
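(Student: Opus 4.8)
The plan is to argue that when $label(x_1)=\{u\}$ is a pendant vertex, the path $P_{uv}$ from $u$ to the first branching vertex $v$ (with $\deg_G(v)\ge 3$, $v$ lying on some $C_k$) is \emph{forced} into any Steiner tree that must reach $u$, and moreover $v$ is the \emph{unique} correct choice of the representative vertex $x'$ for $label(x_1)$ in the recursion. First I would observe that if $u\notin R$, the algorithm has already pruned $u$ in Step~1, so we may assume $u\in R$ (or more precisely that $R\cap label(x_1)=\{u\}$ is the only relevant case). Since $u$ has degree one, the unique edge incident to $u$ must be in $T$; iterating this observation along the degree-two internal vertices of the path $P_{uv}$ (none of which offers an alternative route), every edge of $P_{uv}$ and hence every vertex of $P_{uv}$ must belong to $V(T)$. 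This shows $V(P_{uv})\subseteq ST(G,R)$ for any optimal Steiner tree $T$, and in particular the internal vertices of $P_{uv}$ are Steiner vertices that cannot be avoided.

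Next I would verify the decomposition is correct, i.e. that setting $x'=v$ makes the recursive identity $ST(G,R)=ST(G,(R\cap label(x_1))\cup\{x'\})\cup ST(G,(R\setminus label(x_1))\cup\{x'\})$ hold. The forward direction uses the forced-path argument above together with the fact that $v$ is a cut vertex separating $u$ (and the path $P_{uv}$) from the rest of $G$ once $u$ is pendant and all internal path vertices have degree two: any Steiner tree decomposes at $v$ into the segment spanning $\{u,v\}$, which is exactly $P_{uv}$, and a tree in $G$ spanning $(R\setminus label(x_1))\cup\{v\}$. For the reverse direction, given optimal trees for the two subproblems, their union is connected at $v$, is acyclic (they share only $v$), spans $R$, and its Steiner-vertex count is the sum of the two, which is minimum by optimality of each piece. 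Hence $ST(G,(R\cap label(x_1))\cup\{x'\})=V(P_{uv})$ with $x'=v$ as claimed.

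The main obstacle I anticipate is handling the boundary/degenerate cases cleanly: what if $G$ has no branching vertex at all on the path from $u$ (so $G$ is essentially a path, and ``the first vertex of degree $\ge 3$'' does not exist), or what if $v$ is reached but $R$ contains no further terminals, or the path from $u$ enters a $C_k$ where the ``first vertex of degree $\ge 3$'' could be ambiguous because two vertices of the cycle both have high degree. In the first case one should either note $G$ acyclic is already returned in Step~1, or take $v$ to be the farthest terminal along the path; in the cycle case, the VCO structure guarantees that the attachment vertex $u_1$ of the pendant structure (with $\deg_G(u_1)\ge 3$) is well-defined, so $v$ is unambiguous. I would dispatch these by invoking the pruning in Step~1 and the definition of VCO (Theorem~\ref{thm::sckcons} and the pendant-vertex labelling), which ensures that along any maximal degree-two path emanating from a pendant vertex, the first vertex of degree at least three is uniquely determined and lies on a unique $C_k$.

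Finally, I would remark that optimality of the overall recursion then follows by induction on $|V(G)|$: the base case is $G$ acyclic (returned directly), and the inductive step combines the just-proved local optimality of the $label(x_1)$-subproblem with the inductive hypothesis applied to $G\setminus label(x_1)$, noting that adding $x'=v$ to $R$ in the residual graph correctly records that $v$ is already connected to the pendant portion. This mirrors the structure used in the proofs of Lemmas~\ref{dx1pendvertex}--\ref{dx1k/2+1pendcycle}, so I expect the write-up to be short.
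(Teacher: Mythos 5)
Your proposal is correct and follows essentially the same route as the paper's (much terser) proof: the paper simply notes that $v$ must be added to the terminal set for connectivity and that $P_{uv}$ is the only possible tree containing $u$ and $v$, which is exactly your forced-path/degree-two argument spelled out in detail. The extra material on the recursion, the cut-vertex decomposition at $v$, and the degenerate cases is a sound elaboration but not a different approach.
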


\begin{proof}
We add the vertex $v$ to the terminal set because the required tree $T$ should be connected. Now, the only possible Steiner tree $T$ containing pendant vertex $u$ and $v$ is $P_{uv}$.
$\hfill \qed$
\end{proof}

\begin{lemma}
\label{sx10/1pendcycle}
Let $label(x_1)=\{u_1,\ldots,u_k\}$ be the 0-pendant cycle (or 1-pendant cycle) where $deg_G(u_1)\geq 3$, $\{u_1,u_k\} \in E(G)$ and $\{u_i,u_{i+1}\}\in E(G), 1 \leq i \leq k-1$. Let $\{r_1,\ldots,r_s\} \subseteq \{u_1,\ldots,u_k\}$ be the set of terminal vertices in $label(x_1)$. Then $x'=u_1$ and $ST(G, (R ~\cap ~label(x_1))\cup \{x'\}) = \min\limits_{0\leq i \leq s} V(P_i) \cup ST(G, (R\backslash label(x_1))\cup \{u_1\})$ where $P_i$ is the induced path obtained by removing the internal vertices of $P_{r_ir_{i+1}}$, $1\leq i \leq s-1$ from $label(x_1)$, $P_0$ and $P_s$ is obtained by removing the internal vertices of $P_{u_1r_{1}}$ and $P_{r_su_1}$ from $label(x_1)$, respectively.
\end{lemma}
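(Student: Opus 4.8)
The plan is to exploit the fact that, by construction rule~(iv) of Theorem~\ref{thm::sckcons}, the set $label(x_1)$ induces a cycle $C_k$ on $u_1,\ldots,u_k$ that is attached to the rest of $G$ only through the single vertex $u_1$, while every vertex of $\{u_2,\ldots,u_k\}$ has degree exactly two in $G$. Hence $u_1$ is a cut vertex of $G$ separating $\{u_2,\ldots,u_k\}$ from $V(G)\setminus label(x_1)$. First I would show that $x'=u_1$: since $s\ge 1$ there is a terminal $r_1$ strictly inside the cycle, and (as $deg_G(u_1)\ge 3$ there is further graph carrying the remaining terminals attached at $u_1$) any minimum Steiner tree $T$ for $(G,R)$ must route between the inside and the outside of the cycle through $u_1$, so $u_1\in V(T)$. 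Deleting $u_1$ from $T$ then splits it into a part contained in $label(x_1)$ and a part disjoint from $\{u_2,\ldots,u_k\}$; thus $T$ decomposes as a Steiner tree for $(R\cap label(x_1))\cup\{u_1\}$ inside $label(x_1)$, glued at $u_1$ to a Steiner tree for $(R\setminus label(x_1))\cup\{u_1\}$ in the remaining graph (the one retaining $u_1$). Because $u_1$ is a cut vertex these two pieces may be optimised independently, which both validates the recursive equation for $ST(G,R)$ and reduces the lemma to computing the inside piece.

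Next I would characterise the inside piece. Let $H=T\cap label(x_1)$. Since $u_1$ is a cut vertex, $H$ is a connected subgraph of the cycle $C_k$, and a connected subgraph of a cycle is either a sub-path (arc) or the whole cycle; the whole cycle is impossible because $T$ is acyclic, so $H$ is an arc of $C_k$. This arc must contain every vertex of $(R\cap label(x_1))\cup\{u_1\}=\{u_1,r_1,\ldots,r_s\}$, and conversely any arc of $C_k$ containing these vertices can serve as the inside piece (it is a tree and it meets the outside piece exactly at $u_1$, and an optimal outside Steiner tree uses none of the degree-two vertices $u_2,\ldots,u_k$). As every such arc contains precisely the fixed set $\{u_1,r_1,\ldots,r_s\}$ among its terminal vertices, minimising the number of Steiner vertices of the inside piece is the same as minimising its total number of vertices over all arcs of $C_k$ that contain $\{u_1,r_1,\ldots,r_s\}$.

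Finally I would identify that arc. After relabelling the terminals so that $u_1,r_1,\ldots,r_s$ appear in this cyclic order around $C_k$, they cut $C_k$ into $s+1$ consecutive sub-arcs, one between each pair of cyclically consecutive required vertices. A minimum arc containing all required vertices is obtained by deleting the internal vertices of exactly one of these $s+1$ sub-arcs, namely the one with the most internal vertices. Deleting the internal vertices of $P_{u_1r_1}$ leaves $P_0$, deleting those of $P_{r_ir_{i+1}}$ leaves $P_i$ for $1\le i\le s-1$, and deleting those of $P_{r_su_1}$ leaves $P_s$; so $\min_{0\le i\le s}V(P_i)$ selects the arc obtained by discarding the longest gap, i.e.\ the optimal inside piece. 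Combining it with the independently computed outside piece $ST(G,(R\setminus label(x_1))\cup\{u_1\})$, glued at $u_1$, yields the claimed formula. The step needing the most care is the decoupling argument of the first paragraph — checking that a minimum Steiner tree may indeed be assumed to contain $u_1$, that the inside and outside choices do not interact, and that the small degenerate situations ($u_1\in R$, or no terminal outside $label(x_1)$) are covered by the same cut-vertex reasoning; everything past that point is routine.
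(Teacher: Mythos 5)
Your proposal is correct and takes essentially the same route as the paper's (very terse) proof: $u_1$ is forced into the tree to keep it connected across the attachment point, and the inside part is obtained by taking the minimum over the $s+1$ arcs of the cycle containing $\{u_1,r_1,\ldots,r_s\}$, glued at $u_1$ to the recursively computed outside tree. Your write-up merely supplies the cut-vertex decomposition and the arc characterisation that the paper leaves implicit, so no substantive difference in approach.
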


\begin{proof}
We add the vertex $u_1$ to the terminal set because the required tree $T$ should be connected. The minimum of all possibilities over the $label(x_1)$ is considered to get a minimum Steiner tree.
$\hfill \qed$
\end{proof}

\begin{lemma}
\label{sx12pendcycle}
Let $label(x_1)=\{u_1,\ldots,u_k\}$ be the 2-pendant cycle where $\{u_1,u_k\} \in E(G)$ and $\{u_i,u_{i+1}\}\in E(G), 1 \leq i \leq k-1$, $deg_G(u_1)\geq 3$ and $deg_G(u_2)\geq 3$. Let $\{r_1,\ldots,r_s\}\subseteq \{u_1,\ldots,u_k\}$ be the set of terminal vertices in $label(x_1)$. Then $x'$ is either $u_1$ or $u_2$ and $ST(G, (R \cap label(x_1))\cup \{x'\}) = \min\limits_{j=1,2}\min\limits_{0\leq i \leq s} V(P_i) \cup ST(G, (R\backslash label(x_1))\cup \{u_j\})$ where $P_i$ is the induced path obtained by removing the internal vertices of $P_{r_ir_{i+1}}$, $1\leq i \leq s-1$ from $label(x_1)$, $P_0$ and $P_s$ is obtained by removing the internal vertices of $P_{u_jr_{1}}$ and $P_{r_su_j}$ from $label(x_1)$, respectively.
\end{lemma}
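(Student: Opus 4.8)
The plan is to follow the template of Lemmas~\ref{sx1pendvertex} and~\ref{sx10/1pendcycle}, the only genuinely new feature being that a $2$-pendant cycle communicates with the rest of $G$ through two possible ``portals'', $u_1$ and $u_2$. First I would record the structural facts that drive the argument: since $label(x_1)$ is a $2$-pendant cycle and $x_1$ is the first block of the VCO (hence the last block to be created), the vertices $u_3,\ldots,u_k$ have degree exactly two in $G$, both of their neighbours lying on the cycle; consequently $\{u_1,u_2\}$ is a separator of $G$ that isolates $\{u_3,\ldots,u_k\}$ from the rest of $G$, and $\{u_1,u_2\}$ is an edge of $G$.

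Next I would take an arbitrary minimum Steiner tree $T$ for $R$ and examine $F=T\cap C_k$, the sub-forest of the cycle used by $T$. Every component of $F$ containing a terminal of $R\cap label(x_1)$ must also contain $u_1$ or $u_2$, because no vertex of $\{u_3,\ldots,u_k\}$ is incident with an edge of $T$ leaving the cycle; hence $F$ has at most two ``active'' components. I would then argue, by an exchange argument, that one may assume $F$ has a single active component, i.e.\ that there is an optimum $T$ whose cycle part is a subtree of $C_k$ containing all of $r_1,\ldots,r_s$ together with exactly one of $u_1,u_2$, say $u_j$, and attached to the rest of $T$ only at $u_j$. If $u_1$ and $u_2$ already lie in one component of $F$, then $T$ cannot also use the edge $\{u_1,u_2\}$ (that would close a cycle), and one checks directly that $T$ splits at one of $u_1,u_2$. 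If instead $u_1$ and $u_2$ lie in distinct components of $F$, then the part of $T$ outside the cycle contains a $u_1$-$u_2$ path; I would compare this configuration with the one that drops the $u_2$-component, keeps $u_j=u_1$ as the only portal, and re-routes everything through $u_1$ (using the edge $\{u_1,u_2\}$ outside the cycle when necessary). The cycle-arcs one is allowed to discard when using a single portal (the $s+1$ arcs cut out by $u_j,r_1,\ldots,r_s$) contain those one may discard with two portals (only the $s-1$ arcs strictly between consecutive terminals), so the cycle part does not grow; and adjoining $u_2$ to the residual instance costs no extra Steiner vertex, since $\{u_1,u_2\}\in E(G)$ and $u_2$ is a terminal of that instance, not a Steiner vertex.

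Once the reduction to a single portal $x'=u_j$ is in place, the inside tree is a subtree of $C_k$ through $u_j$ and all of $r_1,\ldots,r_s$; such a subtree is obtained from $C_k$ by deleting the internal vertices of exactly one of the $s+1$ arcs determined by the cyclically consecutive required vertices, and minimizing the vertex count forces us to delete the longest arc — which is precisely what ranging over $P_0,P_1,\ldots,P_s$ accomplishes. Adding $x'$ to the terminal set of $ST\bigl(G,(R\setminus label(x_1))\cup\{x'\}\bigr)$ keeps the overall tree connected, and taking the minimum over $j\in\{1,2\}$ yields the stated recurrence; correctness of the full dynamic program then follows by induction on the length of the VCO, exactly as in the $0/1$-pendant-cycle case.

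The step I expect to be the real obstacle is the exchange argument in the second paragraph: ruling out that an optimum Steiner tree genuinely benefits from entering the cycle at \emph{both} $u_1$ and $u_2$ and ``shortcutting'' through the edge $\{u_1,u_2\}$ (or a longer $u_1$-$u_2$ path) outside the cycle. Making the arc-counting comparison precise, and handling the degenerate cases where some $r_i$ coincides with $u_1$ or $u_2$ or where all terminals lie on one side of the cycle, is where the bookkeeping is delicate; everything else is routine manipulation of paths on a cycle, as in Lemma~\ref{sx10/1pendcycle}.
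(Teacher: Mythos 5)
Your outline follows the same route as the paper's (very terse) proof: force the connection to the rest of $G$ through one of the two portals $u_1,u_2$, enumerate the arc deletions on the cycle for each choice, and take the minimum. The genuine problem sits exactly in the step you flag as the obstacle, and your justification of it does not hold up. If an optimal tree $T$ meets the cycle in two active components, one attached at $u_1$ and one at $u_2$, then $T\cap C_k$ is the cycle minus the interiors of \emph{two} gaps of the cyclic sequence of required vertices $\{u_1,u_2,r_1,\ldots,r_s\}$, not one. So your claim that ``the cycle part does not grow'' when passing to a single portal does not follow from the containment you invoke: deleting two gaps could a priori leave strictly fewer cycle vertices than deleting the single largest gap available to one portal, and your parenthetical description of the two-portal discardable arcs (``only the $s-1$ arcs strictly between consecutive terminals'') is also inaccurate, since the gaps incident to $u_1$ and to $u_2$ are available as well. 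As written, the exchange argument — the only nontrivial content beyond Lemma \ref{sx10/1pendcycle} — is unproved.

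The observation that actually closes the gap is the adjacency of $u_1$ and $u_2$, which you use only for the cheaper purpose of adjoining $u_2$ to the residual instance. Because $u_1$ and $u_2$ are adjacent on the cycle, they are consecutive in the cyclic order of required vertices; the two components of $T\cap C_k$ split the required vertices into two cyclic intervals, one containing $u_1$ and the other $u_2$, so one of the two deleted gaps is forced to be the \emph{empty} gap between $u_1$ and $u_2$. Hence the two-portal cycle part coincides, as a vertex set, with a single subpath of $C_k$ through both $u_1$ and $u_2$, and this is never smaller than the best single-portal choice, since every arc of $\{u_j\}\cup\{r_1,\ldots,r_s\}$ is a union of arcs of $\{u_1,u_2\}\cup\{r_1,\ldots,r_s\}$; moreover the part of $T$ outside the cycle already spans $(R\backslash label(x_1))\cup\{u_1,u_2\}$ and is therefore a feasible candidate for the recursive term with either portal. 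Inserting this adjacency argument (and then the degenerate cases where some $r_i$ equals $u_1$ or $u_2$, which it also settles) repairs your exchange step; without it, the reduction to a single portal — the heart of the lemma — is asserted rather than proved. For comparison, the paper itself gives only the one-line justification that one of $u_1,u_2$ is added to the terminal set and all possibilities are enumerated, so your more ambitious write-up is welcome, but it must get this accounting right.
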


\begin{proof}
We add either $u_1$ or $u_2$ to the terminal set to get the connected graph $T$. We list all the possibilities by adding $u_1$ to the terminal set and by adding $u_2$ to the terminal set, separately. Finally, we choose the minimum of all in order to get a minimum Steiner tree.
$\hfill \qed$
\end{proof}

\begin{lemma}
\label{sx1k/2+1pendcycle}
Let $label(x_1)=\{u_1,\ldots,u_k\}$ be the $(\frac{k}{2}+1)$-pendant cycle where $\{u_1,u_k\}\in E(G)$, $\{u_i,u_{i+1}\}\in E(G), 1 \leq i \leq k-1$,$deg_G(u_1)\geq 3$ and $deg_G(u_{\frac{k}{2}+1})\geq 3$. Let $\{r_1,\ldots,r_s\}\subseteq \{u_1,\ldots,u_k\}$ be the set of terminal vertices in $label(x_1)$. Then $x'$ is either $u_1$ or $u_{\frac{k}{2}+1}$ and $ST(G, (R \cap label(x_1))\cup \{x'\}) = \min\limits_{j=1,{\frac{k}{2}+1}}\min\limits_{0\leq i \leq s} V(P_i) \cup ST(G, (R\backslash label(x_1))\cup \{u_j\})$ where $P_i$ is the induced path obtained by removing the internal vertices of $P_{r_ir_{i+1}}$, $1\leq i \leq s-1$ from $label(x_1)$, $P_0$ and $P_s$ is obtained by removing the internal vertices of $P_{u_1r_{1}}$ and $P_{r_su_{\frac{k}{2}+1}}$ from $label(x_1)$, respectively.
\end{lemma}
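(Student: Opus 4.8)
The plan is to follow the template of the proofs of Lemmas~\ref{sx10/1pendcycle} and~\ref{sx12pendcycle}, the new ingredient being a justification that it suffices to try the two hubs $u_1$ and $u_{\frac{k}{2}+1}$ as the attachment vertex $x'$ and keep the better outcome. First I would record the structure already used in the proof of Lemma~\ref{x1k/2+1pendcycle}: $label(x_1)$ induces a cycle $C_k$ that, together with a companion cycle $S_j$ of $G$ of the same length, forms a $CAGE(p,\frac{k}{2}+1)$, $p\ge 3$; the two internally disjoint $u_1$--$u_{\frac{k}{2}+1}$ arcs of $C_k$ are $u_1u_2\ldots u_{\frac{k}{2}+1}$ and $u_1u_ku_{k-1}\ldots u_{\frac{k}{2}+2}u_{\frac{k}{2}+1}$; every vertex of $\{u_2,\ldots,u_{\frac{k}{2}},u_{\frac{k}{2}+2},\ldots,u_k\}$ has degree exactly two in $G$, so the only vertices of $label(x_1)$ with a neighbour outside $label(x_1)$ are $u_1$ and $u_{\frac{k}{2}+1}$; by the definition of a $(\frac{k}{2}+1)$-pendant cycle exactly one of $u_1,u_{\frac{k}{2}+1}$ is a cut vertex of $G$; and $G\backslash label(x_1)$, which deletes only the private arc, still joins $u_1$ to $u_{\frac{k}{2}+1}$ by a path of length $\frac{k}{2}$, namely the surviving arc of $S_j$.

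The core claim is: writing $x'$ for whichever of $u_1,u_{\frac{k}{2}+1}$ is the cut vertex, there is a minimum Steiner tree $T$ of $(G,R)$ meeting $C_k$ in a single path that contains $x'$ and all of $r_1,\ldots,r_s$, and splitting at $x'$ into a Steiner tree inside $label(x_1)$ for $(R\cap label(x_1))\cup\{x'\}$ and a Steiner tree in $G\backslash label(x_1)$ for $(R\backslash label(x_1))\cup\{x'\}$. Since the algorithm minimises over both choices of $x'$, proving optimality for the cut-vertex choice suffices; one must also check that the other choice never glues an infeasible (non-tree) configuration, which holds because the two glued pieces share only $x'$, the same bookkeeping already carried out in Lemmas~\ref{sx10/1pendcycle} and~\ref{sx12pendcycle}. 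To prove the claim, take a minimal optimal $T$ and examine the part $F$ of $T$ lying in $C_k$: it is a subforest, hence a vertex-disjoint union of arcs, and every component of $F$ reaches the rest of $T$ only through $u_1$ or $u_{\frac{k}{2}+1}$, so $F$ has at most two components, and two only if one contains $u_1$ and the other $u_{\frac{k}{2}+1}$. Whenever $T$ enters $C_k$ through the non-cut hub, everything reached via that hub is still reachable in $G\backslash label(x_1)$, so I would reroute that entry along the length-$\frac{k}{2}$ surviving arc of $S_j$ issuing from the cut hub; since this arc contributes the same $\frac{k}{2}-1$ interior vertices that a traversed arc of $C_k$ would, and the rerouting adds and deletes no other vertex, the cost does not increase, and afterwards $T$ enters $C_k$ only at $x'$, so $F$ is a single arc through $x'$. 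This rerouting is the step I expect to be the main obstacle, and it is precisely where the $CAGE$ structure (the companion arc surviving in $G\backslash label(x_1)$) and the ``exactly one cut vertex'' property of a $(\frac{k}{2}+1)$-pendant cycle enter.

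With $x'=u_j$, $j\in\{1,\frac{k}{2}+1\}$, fixed, $F$ is an arc of $C_k$ through $u_j$ and the terminals $r_1,\ldots,r_s$, i.e.\ $C_k$ with the interior of exactly one terminal-free arc deleted; the minimal such choices delete an arc between two cyclically consecutive terminals $r_i,r_{i+1}$ (yielding $P_i$) or an arc joining $u_j$ to an extreme terminal (yielding $P_0$ and $P_s$). Minimising $V(P_i)$ over $0\le i\le s$ minimises the Steiner vertices used inside $label(x_1)$; adjoining the recursively computed $ST(G\backslash label(x_1),(R\backslash label(x_1))\cup\{u_j\})$ and minimising once more over $j\in\{1,\frac{k}{2}+1\}$ yields exactly the stated formula, the remaining bookkeeping (that the union is connected and double-counts nothing) being identical to that in Lemmas~\ref{sx10/1pendcycle} and~\ref{sx12pendcycle}.
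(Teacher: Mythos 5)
Your overall template (enumerate the two hubs $u_1,u_{\frac{k}{2}+1}$ as the connector $x'$, enumerate which terminal-free arc of the cycle to discard, take the minimum, and glue with the recursive solution) is exactly the paper's argument, which simply says: add $u_1$ or $u_{\frac{k}{2}+1}$ to the terminal set to keep $T$ connected, list all possibilities for both choices, and choose the minimum. The justification that only these two vertices can serve as connectors is also sound, since every vertex of $label(x_1)\setminus\{u_1,u_{\frac{k}{2}+1}\}$ has degree two in $G$.

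However, your ``core claim'' --- that there is always a minimum Steiner tree meeting the pendant cycle in a single path through the \emph{cut} hub, so that proving optimality for the cut-vertex choice suffices --- is false, and the rerouting step you flag as the main obstacle does not go through. Take $k=6$: let $S_j=(u_1,u_2,u_3,u_4,q_2,q_1)$ and let $label(x_1)=(u_1,\ldots,u_6)$ be the $4$-pendant cycle sharing the arc $u_1u_2u_3u_4$ with $S_j$, with some further piece of $G$ attached at $u_1$ so that $u_1$ is the cut vertex. With terminals $R=\{u_3,q_2\}$, the unique minimum Steiner tree is $u_3$--$u_4$--$q_2$ (one Steiner vertex, through the non-cut hub $u_4$), whereas any tree through $u_1$ needs the three Steiner vertices $u_2,u_1,q_1$. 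The cost-neutrality claim in your rerouting (``the surviving arc of $S_j$ contributes the same $\frac{k}{2}-1$ interior vertices that a traversed arc of $C_k$ would'') fails here because the original tree traverses no such arc: it enters $C_k$ at $u_4$ directly from a neighbour outside $label(x_1)$, so rerouting via the cut hub strictly adds vertices. This is precisely why the lemma's formula minimises over \emph{both} $j=1$ and $j=\frac{k}{2}+1$; the correct argument (and the paper's intent) is not that the cut hub suffices, but that any optimal tree must connect $label(x_1)$ to the rest through one of the two hubs, so enumerating both connector choices together with all maximal terminal-free arcs and taking the minimum captures an optimum. A secondary imprecision: the recursive term in the statement is $ST(G,(R\setminus label(x_1))\cup\{u_j\})$, and the ``removal'' of a $(\frac{k}{2}+1)$-pendant cycle deletes only its private arc, not the arc shared with $S_j$, so your bookkeeping on $G\backslash label(x_1)$ should be adjusted accordingly.
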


\begin{proof}
The argument similar to \emph{Lemma \ref{sx12pendcycle}} establishes the claim. $\hfill \qed$
\end{proof}

Thus, we get a polynomial-time algorithm to find a minimum Steiner set using \emph{Lemmas \ref{sx1pendvertex}-\ref{sx1k/2+1pendcycle}}. Steiner tree can be obtained by finding a minimum spanning tree of the induced subgraph on  $ST(G,R)$. \\

%
%
%
%
%
%

%
%
%
%
%
%

\section{Structural and Algorithmic Results on $2K_2$-free graphs}
\label{2k2}

It is known from \cite{alan,white} that Steiner tree and dominating set are NP-Complete on $2K_2$-free graphs. In this section, we study subclasses of $2K_2$-free graphs where these two problems are polynomial-time solvable. Further, on such subclasses, we show that FVS and OCT are also polynomial-time solvable. To the best of our knowledge, this line of study has not been explored in the literature on these problems.

\subsection{$(2K_2, C_3, C_4)$-free graphs}

$(2K_2, C_3, C_4)$-free graphs form a proper subclass of $2K_2$-free graphs, where every induced cycle is of length 5. We observed the following structural properties and conclude that it is a trivial graph class.

\begin{theorem}
\label{c3c4}
If $G$ is a connected $(2K_2, C_3, C_4)$-free graph, then for any minimal vertex separator $S$ of $G$ satisfies the following properties:
\begin{itemize}
\item[(i)] $S$ is an independent set.
\item[(ii)] If $\mid S \mid > 1$, then $G\backslash S$ have exactly one trivial component.
\item[(iii)] If $G\backslash S$ has a non-trivial component, say $G_1$, then for every edge $\{u,v\} \in E(G_1)$, $(N_G(u) \cap S) \cap (N_G(v) \cap S) = \emptyset$ and $(N_G(u) \cap S) \cup (N_G(v) \cap S) = S$. i.e., For every vertex $x \in S$, $(N_G(x) \cap V(G_1))$ is an independent set.
\item[(iv)] Every vertex in a non-trivial component is adjacent to exactly one vertex in $S$.
\end{itemize}
\end{theorem}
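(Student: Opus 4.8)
The plan is to verify each of the four properties in turn, leaning heavily on Theorem~\ref{2k2pre} (the structural characterization of $2K_2$-free graphs via minimal vertex separators) together with the fact that every induced cycle has length exactly $5$. Throughout, let $S$ be a minimal vertex separator of the connected $(2K_2,C_3,C_4)$-free graph $G$, and let $G_1,\ldots,G_l$ $(l\ge 2)$ be the components of $G\backslash S$.

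\emph{Property (i).} I would argue that $S$ is an independent set directly. Since $S$ is a \emph{minimal} vertex separator, every vertex of $S$ has a neighbor in at least two distinct components of $G\backslash S$ (this is the standard property of minimal separators). Suppose $x,y\in S$ with $\{x,y\}\in E(G)$. By Theorem~\ref{2k2pre}(iv), the graph induced on $V(S)$ has at most one non-trivial component, so the edge $\{x,y\}$ lies in that component $S_1$. Now pick a component $G_i$ of $G\backslash S$ that contains a neighbor $a$ of $x$, and note $a$ has a neighbor path to $S$; the idea is to exhibit a short path through $G_i$ that, together with $x$ and $y$, closes up into a triangle or a $C_4$ unless forced otherwise. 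More cleanly: if $\{x,y\}$ is an edge, consider vertices $a\in N_G(x)\cap V(G_i)$ and $b\in N_G(y)\cap V(G_j)$ for components $i\ne j$ (possible since each of $x,y$ reaches two components, and one can choose them in different components or use condition (ii) below once available). Then $a,x,y,b$ together with the fact that $a\not\sim b$ (different components) gives either a $2K_2$ ($\{a,x\}$ and $\{y,b\}$ with no other edges) or a short cycle; the $2K_2$-freeness forces an edge among $\{a,y\},\{x,b\},\{a,b\}$, and each choice produces a $C_3$ or $C_4$ — contradiction. I expect this reasoning to need care about \emph{which} edges are ruled out, so the argument should be organized as a small case analysis on the adjacencies between $\{a,b\}$ and $\{x,y\}$.

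\emph{Properties (ii) and (iii).} For (ii): by Theorem~\ref{2k2pre}(i), $G\backslash S$ has at most one non-trivial component, so it suffices to rule out the possibility of two or more trivial components together with a non-trivial one, or of three-or-more trivial components, when $|S|>1$. Here Theorem~\ref{2k2pre}(ii) says every trivial component is universal to $S$; if two trivial components $\{a\},\{b\}$ exist and $|S|\ge 2$ with $x,y\in S$, then $a,b$ are both adjacent to both $x,y$, so $\{a,x,b,y\}$ induces a $C_4$ — contradiction. Thus at most one trivial component when $|S|>1$; combined with $l\ge 2$ and part~(i), I would conclude the remaining component is the (unique) non-trivial one and the trivial one exists, giving exactly one trivial component. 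For (iii): take a non-trivial component $G_1$ and an edge $\{u,v\}\in E(G_1)$. By Theorem~\ref{2k2pre}(iii) this edge is universal to $S$, i.e.\ $(N_G(u)\cap S)\cup(N_G(v)\cap S)=S$. It remains to show the union is \emph{disjoint}: if some $x\in S$ were adjacent to both $u$ and $v$, then $\{x,u,v\}$ induces a $C_3$ — contradiction. Hence the union is a partition of $S$, which is exactly the displayed statement; the reformulation ``$N_G(x)\cap V(G_1)$ is independent'' then follows because if $x$ had two adjacent neighbors $u,v$ in $G_1$ we would again get a triangle $\{x,u,v\}$.

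\emph{Property (iv).} Let $w$ be a vertex of the non-trivial component $G_1$. Since $G_1$ is non-trivial it has an edge, and by (iii) combined with the minimal-separator property one first checks $N_G(w)\cap S\ne\emptyset$ (if $w$ had no neighbor in $S$, one exploits that $w$ must be connected in $G_1$ to a vertex $u$ that does have a neighbor in $S$ — since edges of $G_1$ are universal to $S$ — and walks back to produce a forbidden configuration, or invokes Theorem~\ref{2k2pre}(v)). Suppose $w$ is adjacent to two vertices $x,y\in S$. By~(i), $\{x,y\}\notin E(G)$. Since $|S|\ge 2$ and using~(ii), there is a trivial component $\{a\}$ universal to $S$, so $a\sim x$ and $a\sim y$; then $\{w,x,a,y\}$ induces a $C_4$ (edges $wx, xa, ay, yw$; non-edges $wa$ since $a$ is in a different component, and $xy$ by part~(i)) — contradiction. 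Hence $w$ has exactly one neighbor in $S$. I would also handle the degenerate case $|S|=1$ separately, where~(iv) is immediate. The main obstacle I anticipate is Property~(i): establishing independence of $S$ requires choosing the auxiliary vertices $a,b$ in the components carefully and then running the case analysis on cross-adjacencies so that every case yields a $2K_2$, $C_3$, or $C_4$; once~(i) is in hand, the other three properties follow relatively quickly from Theorem~\ref{2k2pre} and the absence of triangles and $4$-cycles.
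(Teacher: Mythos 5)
Your handling of (ii), (iii) and the ``at most one neighbour'' half of (iv) matches the paper's proof: two trivial components universal to two nonadjacent separator vertices give an induced $C_4$; universality of each edge of $G_1$ to $S$ gives the union, and triangle-freeness gives the disjointness; and a vertex of $G_1$ with two neighbours $x,y\in S$ together with a universal trivial vertex $a$ yields the induced $C_4$ $(v,x,a,y)$. The genuine gap is in your argument for (i). The key step --- that for $a\in N_G(x)\cap V(G_i)$ and $b\in N_G(y)\cap V(G_j)$, $i\neq j$, ``$2K_2$-freeness forces an edge among $\{a,y\},\{x,b\},\{a,b\}$'' --- is false: since $\{x,y\}\in E(G)$ is precisely your hypothesis, the four vertices $a,x,y,b$ induce at worst a $P_4$, which contains no induced $2K_2$, so nothing is forced and no $C_3$ or $C_4$ appears. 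Moreover, no purely local case analysis on such a quadruple can succeed: a graph consisting of the edge $\{x,y\}$ together with two internally disjoint $x$--$y$ paths of length at least $4$ is $(C_3,C_4)$-free and has the edge $\{x,y\}$ inside a minimal separator, so $2K_2$-freeness must enter through the global structure. The paper does exactly this: $G\backslash S$ has at least two components, by Theorem~\ref{2k2pre}(i) at most one is non-trivial, hence some trivial component $\{w\}$ exists; by Theorem~\ref{2k2pre}(ii) $w$ is universal to $S$, and $(w,x,y)$ is then an induced $C_3$. Your fallback ``use condition (ii) below once available'' does not repair this, since the proof of property (ii) itself relies on (i).

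A secondary point on (iv): the ``at least one neighbour in $S$'' half that you propose to verify (by walking back through $G_1$ or invoking Theorem~\ref{2k2pre}(v)) cannot be established in this generality, and the paper's proof makes no attempt at it --- it only proves the ``at most one'' direction. For instance, a double star (adjacent centres $x,y$, each with pendant leaves) is a connected $(2K_2,C_3,C_4)$-free graph in which $S=\{x\}$ is a minimal separator and the leaves attached to $y$ lie in the non-trivial component but have no neighbour in $S$. So that half of your plan should simply be dropped; the $C_4$ argument is the whole content.
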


\begin{proof}
\begin{itemize}
\item[(i)] On the contrary, assume that $S$ has at least one edge, say $\{x,y\}$. Let $G_i$ be  a trivial component in $G\backslash S$ and let $V(G_i) = \{w\}$. Since, $G$ is a $2K_2$-free graph, $\{w,x\}, \{w,y\} \in E(G)$ (by \emph{Theorem \ref{2k2pre}.(ii)}). Thus, $(w,x,y)$ forms an induced $C_3$, which is a contradiction to the definition of $G$. Hence, $S$ is an independent set.
\item[(ii)] On the contrary, assume that $G\backslash S$ has at least two trivial components, say $G_i$ and $G_j$. Let $V(G_i) = \{w_i\}$ and $V(G_j) = \{w_j\}$. Let $x,y$ be any two vertices in $S$. By $(i)$, $\{x,y\} \notin E(G)$ and by \emph{Theorem \ref{2k2pre}.(ii)}, $\{w_i,x\}$, $\{w_i, y\}$, $\{w_j, x\}$, $\{w_j, y\} \in E(G)$. Thus, $(w_i,x,w_j,y)$ forms an induced $C_4$, which is a contradiction to the definition of $G$. Hence, $G\backslash S$ have exactly one trivial component if $\mid S \mid >1$.
\item[(iii)] By \emph{Theorem \ref{2k2pre}.(iii)}, every edge $\{u,v\} \in E(G_1)$ is universal to $S$, thus, $(N_G(u) \cap S) \cup (N_G(v) \cap S) = S$. Moreover, if $(N_G(u) \cap S) \cap (N_G(v) \cap S) \neq \emptyset$, then every vertex in $(N_G(u) \cap S) \cap (N_G(v) \cap S)$ forms an induced $C_3$ together with $u$ and $v$. Hence, $(N_G(u) \cap S) \cap (N_G(v) \cap S) = \emptyset$.
\item[(iv)] On the contrary, assume that exist a vertex $v$ in a non-trivial component such that $(N_G(v) \cap S) = \{x_1, x_2, \ldots, x_p\},$ $p \geq 2$. By $(ii)$, there exist a trivial component in $G \backslash S$, say $G_2$. Let $V(G_2) = \{w\}$. Therefore, $(v, x_1, x_2, w)$ forms an induced $C_4$, which is a contradiction to the definition of $G$. $\hfill \qed$
\end{itemize}
\end{proof}

\begin{corollary}
If $G$ is a connected $(2K_2, C_3, C_4)$-free graph, then $G$ is either a tree or $C_5$.
\end{corollary}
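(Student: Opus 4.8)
The plan is to leverage Theorem~\ref{c3c4} directly: we have just shown that any minimal vertex separator $S$ of a connected $(2K_2,C_3,C_4)$-free graph $G$ is independent, that $G\backslash S$ has exactly one trivial component when $|S|>1$, and (by properties (iii)--(iv)) that the structure around a non-trivial component is extremely rigid. So the proof of the corollary reduces to a case analysis on whether $G$ contains an induced cycle at all, and if it does, on the size of a minimal vertex separator.

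First I would dispose of the cycle-free case: if $G$ has no induced cycle then, being connected, $G$ is a tree, and we are done. So assume $G$ contains an induced $C_5$, say on vertices $a_1,a_2,a_3,a_4,a_5$ in cyclic order. Pick a minimal vertex separator $S$ (for instance a minimal $a_1$--$a_3$ separator); note $G$ is not complete since it has an induced $C_5$, so minimal separators exist. The key observation is that $S$ cannot be too small: if $|S|=1$, say $S=\{x\}$, then $G\backslash S$ has components, but every component must attach to $x$, and combined with property~(ii) (only one trivial component is forbidden only when $|S|>1$, so here I must argue separately) and the fact that $G$ is $2K_2$-free and $C_3$-free, one shows the pieces glued at a cut vertex $x$ force either a $C_3$ or leave no room for the $C_5$. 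The cleaner route: apply Theorem~\ref{c3c4}(ii) in the case $|S|>1$ and handle $|S|=1$ by noting a single cut vertex in a $C_3$-free graph whose blocks are $2K_2$-free and $C_4$-free collapses the graph to essentially one cycle plus trees.

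The heart of the argument is: suppose $|S|>1$. By (ii), $G\backslash S$ has exactly one trivial component $\{w\}$, and $w$ is universal to $S$ (Theorem~\ref{2k2pre}(ii)); since $S$ is independent, $|S|\ge 3$ would give $w$ together with three pairwise non-adjacent vertices of $S$ plus any edge outside, but more directly, $|S|\ge 3$ forces an induced $C_4$ through $w$ and two vertices of $S$ via any other component's attachment — exactly the contradiction used in proving (iv). Hence $|S|=2$, say $S=\{x,y\}$ with $\{x,y\}\notin E(G)$ and $w$ adjacent to both. If $G\backslash S$ has a non-trivial component $G_1$, property~(iv) says every vertex of $G_1$ sees exactly one of $x,y$, and (iii) says no edge of $G_1$ has both endpoints on the same side; so $G_1$ is bipartite with the two sides being $N(x)\cap V(G_1)$ and $N(y)\cap V(G_1)$, and one checks that $G_1$ must itself be edgeless or a single edge (a path of length $\ge 2$ inside $G_1$ together with $w$ produces $C_4$ or $C_3$), which together with $w,x,y$ yields exactly $C_5$ and nothing more can be attached without creating a forbidden cycle; and if $G\backslash S$ has only trivial components, only $\{w\}$ is allowed, so $G$ is a path on $\{x,w,y\}$ plus the graph induced on $S$ — a tree. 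Tracking all these sub-cases shows the only graph with an induced cycle is $C_5$ itself.

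I expect the main obstacle to be the bookkeeping in the $|S|=2$ case: making sure that after identifying $w,x,y$ and the bipartite non-trivial component $G_1$, no additional vertices, edges, or further components can coexist with the $C_5$ without producing $2K_2$, $C_3$, or $C_4$ — i.e., proving that the $C_5$ is rigid and spanning rather than merely present. The $|S|=1$ boundary case also needs a careful, self-contained argument since Theorem~\ref{c3c4}(ii) is stated only for $|S|>1$; the clean way is to observe that a graph with a cut vertex whose blocks are each $(2K_2,C_3,C_4)$-free and smaller can be handled by induction, reducing to the block containing the $C_5$, which has no cut vertex and hence (being 2-connected, $2K_2$-free, $C_3$-free, $C_4$-free) must be $C_5$.
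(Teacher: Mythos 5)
Your proposal is correct and follows essentially the same route as the paper: both deduce from Theorem~\ref{c3c4} (together with Theorem~\ref{2k2pre}) that any minimal vertex separator satisfies $\lvert S\rvert\le 2$ and that the only possible non-trivial component of $G\backslash S$ is a single edge, which forces $G$ to be a tree or exactly $C_5$. The one loose end is your $\lvert S\rvert=1$ treatment --- showing that the block containing the induced $C_5$ is itself a $C_5$ does not yet show that $G$ has no other blocks (that needs, e.g., the $2K_2$ formed by an edge of another block at the cut vertex and the edge of the $C_5$ farthest from it) --- but under your own choice of $S$ as a minimal $a_1$--$a_3$ separator this case is vacuous, since the two internally disjoint $a_1$--$a_3$ paths on the induced $C_5$ force $\lvert S\rvert\ge 2$.
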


\begin{proof}
From \emph{Theorem \ref{c3c4}}, we can observe that the only possible structure of a non-trivial component after the removal of any minimal vertex separator from $G$ is $K_2$ and $\mid S \mid \leq 2$. Further, if $\mid S \mid = 1$, then the graph is $(2K_2,cycle)$-free. If $\mid S \mid = 2$ and if $G\backslash S$ has a non-trivial component, then the graph is an induced $C_5$.
$\hfill \qed$
\end{proof}

 Thus, FVS, OCT, Steiner tree problem and a dominating set can be solved in $O(1)$ time when the input is restricted to $(2K_2, C_3, C_4)$-free graphs.

\subsection{$(2K_2, C_3, C_5)$-free graphs}
$(2K_2, C_3, C_5)$-free graphs are $2K_2$-free graphs which are either acyclic or every induced cycle is of length 4. Further, these graphs are $2K_2$-free chordal bipartite graphs. We shall study this graph class from MVS perspective.
\begin{theorem}
\label{c3c5}
If $G$ is a connected $(2K_2, C_3, C_5)$-free graph, then for any minimal vertex separator $S$ of $G$ satisfies the following properties:
\begin{itemize}
\item[(i)] $S$ is an independent set.
\item[(ii)] If $G\backslash S$ has a non-trivial component, say $G_1$, then for every vertex $x \in S$, $(N_G(x) \cap V(G_1))$ is an independent set.
\item[(iii)] For every edge $\{u,v\}$ in a non-trivial component $G_1$ of $G\backslash S$, $u$ is universal to $S$ and $(N_G(v) \cap S) = \emptyset$.
\item[(iv)] Let $T$ be the set of all vertices in the trivial components of $G\backslash S$. Then the graph induced on the vertex set $T\cup S$ is a complete bipartite graph.
\item[(v)] Let $U$ and $U'$ be the set of all vertices in a non-trivial component which are universal  and non-universal to $S$, respectively. Then, there exists a vertex $u \in U$ such that $u$ is universal to $U'$.
\end{itemize}
\end{theorem}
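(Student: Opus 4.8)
The plan is to treat the five items in the order (i), (ii), (iv), (iii), (v): essentially all the work sits in (iii), and (v) is then read off from the structure that (iii) forces on the non-trivial component. We rely on two standing facts. First, a minimal vertex separator $S$ is nonempty. Second, $G\backslash S$ has at least two components and, by \emph{Theorem \ref{2k2pre}.(i)}, at most one of them is non-trivial; hence at least one trivial component exists, a single vertex we call $w$, and by \emph{Theorem \ref{2k2pre}.(ii)} this $w$ is adjacent to every vertex of $S$.

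Items (i), (ii) and (iv) are short forbidden-subgraph checks. If $S$ contained an edge $\{x,y\}$, then $(w,x,y)$ would be an induced $C_3$, proving (i). If some $x\in S$ had two neighbours $v_1,v_2\in V(G_1)$ with $\{v_1,v_2\}\in E(G)$, then $(x,v_1,v_2)$ would be an induced $C_3$, proving (ii). For (iv), $S$ is independent by (i), the set $T$ of all trivial-component vertices is independent (each trivial component is a single vertex, and vertices in distinct components of $G\backslash S$ are non-adjacent), and by \emph{Theorem \ref{2k2pre}.(ii)} every vertex of $T$ is adjacent to every vertex of $S$; hence the graph induced on $T\cup S$ is complete bipartite.

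The heart of the proof is (iii). Fix an edge $\{u,v\}$ of the non-trivial component $G_1$. No $x\in S$ is adjacent to both $u$ and $v$ (else $(x,u,v)$ is a $C_3$), while by \emph{Theorem \ref{2k2pre}.(iii)} every $x\in S$ is adjacent to at least one of $u,v$; hence $S$ is partitioned as $S=(N_G(u)\cap S)\,\uplus\,(N_G(v)\cap S)$. Suppose both parts were nonempty, say $x\sim u$, $x\not\sim v$, and $y\sim v$, $y\not\sim u$. Then $x\neq y$ and, by (i), $x\not\sim y$; moreover $w\sim x$ and $w\sim y$, while $w$ is adjacent to neither $u$ nor $v$, since these lie in a component of $G\backslash S$ different from the one containing $w$. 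Consequently $(x,u,v,y,w)$ is an induced $C_5$, contradicting $C_5$-freeness. So one of the two parts is empty; after relabelling the endpoints of the edge, $N_G(v)\cap S=\emptyset$ and $u$ is universal to $S$, which is (iii).

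Finally (v). Item (iii) pins down $G_1$: it is non-trivial and connected, so each of its vertices has a neighbour inside $G_1$, and applying (iii) to such an edge shows that every vertex of $G_1$ is either universal to $S$ or has no neighbour in $S$; thus $V(G_1)=U\,\uplus\,U'$. Item (iii) also forbids edges inside $U$ and inside $U'$, so $G_1$ is bipartite with parts $U$ and $U'$. As an induced subgraph of the $2K_2$-free graph $G$, the graph $G_1$ is $2K_2$-free, hence the sets $\{\,N_G(u)\cap U' : u\in U\,\}$ are linearly ordered by inclusion: two incomparable such sets would yield, through private neighbours $v_1,v_2$, an induced $2K_2$ on $\{u_1,v_1,u_2,v_2\}$. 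Choosing $u^{*}\in U$ whose neighbourhood in $U'$ is inclusion-maximal, connectivity of $G_1$ gives every vertex of $U'$ a neighbour in $U$ and hence inside $N_G(u^{*})$, so $u^{*}$ is universal to $U'$. The genuinely delicate step is the induced $C_5$ in (iii): one must check all five non-edges $xv$, $xy$, $uy$, $uw$, $vw$, and this is precisely where the partition of $S$ and the cross-component placement of $w$ enter. The remaining arguments are routine once (iii) is established.
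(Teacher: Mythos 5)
Your proof is correct, and for items (i)--(iv) it follows essentially the paper's route: the same $C_3$ obstructions for (i) and (ii), the same complete-bipartite observation for (iv), and for (iii) the very same induced $C_5$ $(x,u,v,y,w)$ built from the trivial-component vertex $w$ (which is universal to $S$ by \emph{Theorem \ref{2k2pre}.(ii)}) together with the edge-universality of \emph{Theorem \ref{2k2pre}.(iii)}; your version is slightly more careful in spelling out the disjoint partition $S=(N_G(u)\cap S)\uplus(N_G(v)\cap S)$ and checking all five non-edges. The genuine divergence is in (v). The paper argues by induction on $\vert U\vert$, assuming that $G_1\setminus\{u\}$ still has a vertex of $U$ universal to $U'$; as written that induction is shaky, since deleting $u$ may disconnect $G_1$ (the base case leans on connectivity) and the final case analysis over $x\in N_{G_1}(u)\setminus N_{G_1}(v)$ with ``$\{v,x\}\in E(G)$'' is internally inconsistent. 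You instead first verify that $U$ and $U'$ really partition $V(G_1)$ into two independent sets (a step the paper only asserts), and then invoke the standard fact that in a $2K_2$-free bipartite graph the neighbourhoods $N(u)\cap U'$, $u\in U$, are pairwise comparable under inclusion, so an inclusion-maximal one is universal to $U'$ by connectivity of $G_1$. This chain argument is cleaner, avoids the induction altogether, and actually repairs the weak point in the paper's proof; the paper's induction, had it been carried out correctly, would give no additional generality in return.
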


\begin{proof}
\begin{itemize}
\item[(i)] The argument is similar to the proof in \emph{Theorem \ref{c3c4}.(i)}.
\item[(ii)] The argument is similar to the proof in \emph{Theorem \ref{c3c4}.(iii)}.
\item[(iii)] On the contrary, assume that there exists an edge $\{u,v\} \in E(G_1)$ such that $S \not\subseteq N_G(u)$, $(N_G(v) \cap S) \neq \emptyset$ and $(N_G(u) \cap S) \neq \emptyset$. Since, $G$ is $2K_2$-free graph, $(N_G(u) \cap S) \cup (N_G(v) \cap S) = S$ and there exists a trivial component in $G\backslash S$, say $G_2$. Let $V(G_2) = \{w\}$. By our assumption, $u$ is adjacent to some vertex in $S$, say $x$ and $v$ is adjacent to some vertex in $S$, say $y$, such that $x \neq y$. Thus, $(u,v, y, w, x)$ forms an induced $C_5$, which is a contradiction to the definition of $G$.
\item[(iv)] This is true by the fact that $S$ is independent and every trivial component is universal to $S$.
\item[(v)] By (iii), $G_1$ is a bipartite graph where $U$ and $U'$ are the independent sets. Let us prove the statement by mathematical induction on the cardinality of $U$. \\
\textit{Base Case:} Since $G_1$ is connected, the statement is true for $\vert U \vert =1$.\\
\textit{Hypothesis:} Assume that the statement is true for $\vert U \vert = s, ~s \geq 1$.\\
\textit{Induction Step:} Let $\vert U \vert = s+1, ~s \geq 1$.\\
For some $u \in U$, the graph $G_1\backslash \{u\}$ has a vertex $v \in U$ universal to $U'$, by the hypothesis. If $N_{G_1}(u) \subset N_{G_1}(v)$, then there is nothing to prove. W.l.o.g. assume that $N_{G_1}(u) \backslash N_{G_1}(v)\neq \emptyset$. For arbitrary $x \in N_{G_1}(u) \backslash N_{G_1}(v)$. If $\{v,x\} \in E(G)$, then $v$ is the required vertex which is universal to $U'$. If $\{v,x\} \notin E(G)$, then $u$ is the required vertex which is universal to $U'$. $\hfill \qed$
\end{itemize}
\end{proof}

Although, it is known that the problem of finding a minimum feedback vertex set in chordal bipartite graphs, a super class of $2K_2$-free chordal bipartite graphs, is polynomial time solvable \cite{kloks}, using the above observation we provide a different approach for this problem in $(2K_2, C_3, C_5)$-free graph. Moreover, our approach takes linear time in terms of the input size. Also, it is easy to see that FVS is precisely ECT. 

\begin{theorem}
\label{fvsc3c5}
Let $G$ be a connected $(2K_2, C_3, C_5)$-free graph and $S$ be any minimal vertex separator of $G$, then the cardinality of any minimum feedback vertex set $F$ is
\begin{itemize}
\item[(i)] $min \{\mid S \mid -1, \mid T \mid - 1\}$, if $G\backslash S$ has only trivial components, and $T$ is the set of all trivial components in $G\backslash S$.
\item[(ii)] $min \{ \mid S \mid, \mid U \mid + (\mid T \mid -1)\}$, if $G\backslash S$ has a non-trivial component $G_1$, which is cycle-free, and $U$ is the set of all vertices in $G_1$ which are universal to $S$.
\item[(iii)] $min \{ \mid U \mid + (\mid  T \mid -1), (\mid U \mid -1) + (\mid S \mid -1)\}$, if $G\backslash S$ has a non-trivial component $G_1$ and $G_1$ has at least one cycle.
\end{itemize}
\end{theorem}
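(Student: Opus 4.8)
The plan is to analyze the three cases according to the structure of $G \backslash S$ as dictated by Theorem~\ref{c3c5}, and in each case exhibit a feedback vertex set achieving the stated bound together with a matching lower bound argument. Throughout, I would exploit the fact that, by Theorem~\ref{c3c5}, $S$ is independent, every trivial component of $G\backslash S$ is universal to $S$, and in a non-trivial component $G_1$ the vertices split into $U$ (universal to $S$) and $U'$ (with no neighbour in $S$), with some $u \in U$ universal to $U'$; moreover $G$ is chordal bipartite, so every induced cycle has length exactly $4$.

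For case (i), where $G\backslash S$ has only trivial components, the graph induced on $T \cup S$ is complete bipartite $K_{|S|,|T|}$ (Theorem~\ref{c3c5}.(iv)), and since $G$ is connected with $S$ a separator, $G = K_{|S|,|T|}$ (any vertex outside would lie in a non-trivial component). A feedback vertex set of $K_{a,b}$ must leave a forest; deleting all but one vertex from the smaller side leaves a star, so $\min\{|S|,|T|\}-1$ suffices, and it is necessary because any two vertices remaining on each side together with the complete bipartite adjacency produce a $C_4$. For case (ii), with $G_1$ cycle-free: one option is to delete all of $U$, which disconnects $G_1$'s vertices that touch $S$ from $S$; what remains is $S$ together with $U'$ and the trivial components $T$, and since $U'$ has no neighbour in $S$ while $T$ is complete to $S$, the cycles that survive lie in $S \cup T = K_{|S|,|T|}$-type structure, killed by deleting $|T|-1$ further vertices (or zero if $T$ is empty/small) — giving $|U| + (|T|-1)$; the other option is to delete all of $S$, leaving $G_1$ (a forest) plus isolated vertices of $T$, costing $|S|$. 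I would argue optimality by showing every cycle passes through $S$ or through $U$ (since $U'$ is independent and only adjacent to $U$, and $T$ is only adjacent to $S$), so any FVS must either hit $S$ entirely or leave enough of $S$ that it must then pay inside $U \cup T$.

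Case (iii) is the one I expect to be the main obstacle, since now $G_1$ itself contains a $C_4$, forcing interaction between the cycles internal to $G_1$ and the cycles through $S$. The first candidate, $|U| + (|T|-1)$, again deletes all of $U$ (destroying all cycles inside $G_1$, as $U'$ is independent, and all cycles through $S$ that used $G_1$) and then $|T|-1$ vertices to break the $K_{|S|,|T|}$ part; the second candidate, $(|U|-1) + (|S|-1)$, keeps one universal vertex $u \in U$ (the one universal to $U'$, by Theorem~\ref{c3c5}.(v)) and deletes the rest of $U$ and all but one vertex of $S$, leaving a forest because $u$ together with the remaining single vertex of $S$ and the tree-like remnant cannot close a cycle. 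The delicate part is the lower bound: I would show that any FVS either contains all but at most one vertex of $U$ — because two surviving universal vertices together with two vertices of $S$ (if $|S|\ge 2$) or with a common $U'$-neighbour form a $C_4$ — and then a careful case split on how many vertices of $S$ survive, using that a surviving $S$-vertex plus two surviving $U$-vertices, or plus two trivial components, yields a $C_4$. Balancing these forced deletions is exactly what produces the minimum of the two quantities; the bookkeeping of which configurations remain cycle-free is where the real work lies.
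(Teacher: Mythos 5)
Your case (i) is sound and is essentially the paper's argument: with only trivial components the graph is $K_{|S|,|T|}$, and a complete bipartite graph becomes a forest exactly when one side is reduced to a single vertex. The genuine gap is in the lower bounds for (ii) and (iii), which your proposal leaves as intentions (``I would argue\ldots'', ``the bookkeeping\ldots is where the real work lies''), and the specific dichotomies you plan to establish there in fact fail. In (ii) you want to show that any FVS either contains all of $S$ or must pay $|U|+(|T|-1)$ inside $U\cup T$; this breaks when $|U|=1$. Take $S=\{x_1,x_2,x_3\}$ independent, three trivial components each universal to $S$, and $G_1$ a star whose centre $u$ is universal to $S$ and whose two leaves lie in $U'$; this graph is bipartite and $2K_2$-free and $S$ is a minimal separator. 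Deleting $x_2,x_3$ leaves a tree, so the minimum FVS has size $2$, strictly below $\min\{|S|,\,|U|+|T|-1\}=3$. In (iii) your key step, ``any FVS contains all but at most one vertex of $U$, because two surviving universal vertices together with two vertices of $S$ or with a common $U'$-neighbour form a $C_4$,'' is also false: the FVS may delete the $U'$-neighbours instead of $U$. Take $|S|=1$, one trivial component, and $G_1=K_{4,2}$ with the $4$-side equal to $U$; deleting the two $U'$-vertices leaves a star, so the minimum FVS has size $2$, while $\min\{|U|+|T|-1,\,(|U|-1)+(|S|-1)\}=3$. So the configurations your plan ignores (hitting $U'$ rather than $U$, or keeping a single vertex of $S$ when $|U|=1$) are precisely where the argument collapses, and on such instances the stated equalities are not even attained, so no sharpening of your case analysis can close the gap without amending the statement.

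For comparison, the paper proves (ii) by appealing to a figure enumerating the ``possible structures'' of a cycle-free $G_1$, and proves (iii) by exhibiting explicit $C_4$'s (e.g.\ $(a,x,v,y)$, $(a,x,b,y)$, $(P_{vw},y)$, with a separate $|S|=1$ analysis using $2K_2$-freeness to force $|P_{vw}|=3$); but it does so only after implicitly restricting the competing FVS to prescribed subsets such as $U\cup(T\setminus\{a\})$ or $U$, so it likewise never rules out solutions of the kind above. Your instinct that the lower bound is where the real work lies is correct; as written, neither your plan nor a routine completion of it supplies that work, and for (ii) and (iii) it cannot.
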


\begin{proof}
\begin{itemize}
\item[(i)] If $G$ is a cycle-free graph, then either $\mid S \mid = 1$ or $\mid T \mid = 1$. Thus, $F = \emptyset$, which is minimum. Without loss of generality, assume that $G$ has at least one cycle and $G \backslash S$ has only trivial components, say $G_1, G_2, \ldots, G_l$, $l \geq 2$. By our assumption, $\mid S \mid \geq 2$ and by \emph{Theorem \ref{c3c5}}, $S$ is an independent set. Let $V(G_i)=\{u_i\}$. Clearly, $G \backslash F$ results in a forest, where $F$ consists of $\mid S \mid$ - 1 vertices from $S$ and $\mid T \mid$ - 1 vertices from $T$. Now, our claim is to prove the set $F$ is minimum.
\begin{itemize}
\item[$\bullet$] $F = min \{\mid S \mid -1, \mid T \mid - 1\} = \mid S \mid - 1$ \\
On the contrary, assume that $F$ is not minimum, then the removal of $S'$ vertices from $G$ results in a forest, where $S' <  \mid S \mid - 1$. I.e., $S$ has at least two vertices in $G \backslash F$, say $x, y \in S$. Clearly, $(u_1, x, u_2, y)$ forms an induced $C_4$, which is a contradiction to the definition of $F$. 
\item[$\bullet$]  $F = min \{\mid S \mid -1, \mid T \mid - 1\} = \mid T \mid - 1$ \\
On the contrary, assume that $F$ is not minimum, then the removal of $T'$ vertices from $G$ results in a forest, where $T' <  \mid T \mid - 1$. I.e., $T$ has at least two vertices in $G \backslash F$, say $u_1, u_2 \in T$. Let $x$ and $y$ be any two vertices in $S$. Clearly, $(u_1, x, u_2, y)$ forms an induced $C_4$, which is a contradiction to the definition of $F$. 
\end{itemize}
Hence, $F$ is a minimum FVS if $G\backslash S$ has only trivial components.
\item[(ii)] All possible structures of $G_1$ is given in \emph{Figure \ref{fig:g123}}. From the structures of $G_1$, it is clear that $F$ is a minimum FVS. It follows from \emph{Theorem \ref{c3c5}} that no more structures of $G_1$ are possible. 

\begin{figure}[h]
\centering
\includegraphics[scale=0.35]{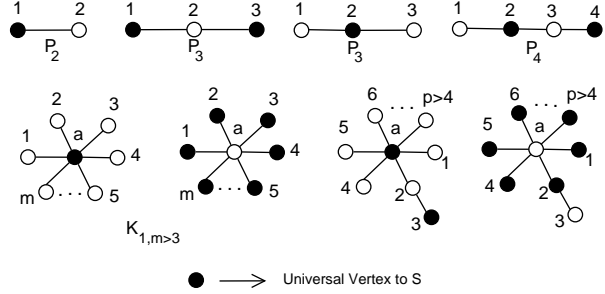}
\caption{All Possible structures of $G_1$ when $G_1$ is cycle-free}
\label{fig:g123}
\end{figure}

\item[(iii)] We prove this case separately for $\mid S \mid = 1$ and $\mid S \mid >1$.
\begin{itemize}
\item[$\bullet$] $\mid S \mid = 1$ and let $S = \{x\}$. \\
It is clear that, every cycle of $G$ lies in $G_1$. Thus, $F = min \{ \mid U \mid + (\mid  T \mid -1), (\mid U \mid -1) + (\mid S \mid -1)\} = \mid U \mid -1$ and the removal of $\mid U \mid -1$ vertices from $U$ results in a forest. Now, our claim is to prove that $F$ is minimum. On the contrary, assume that removing at most $\mid U \mid$ - 2 vertices from $U$ results in a forest. I.e., $G\backslash F$ has at least two vertices in $U$, say $v,w \in U$. Since, $G$ is $2K_2$-free $\mid P_{vw} \mid \leq 4$. Note that, $\mid P_{vw} \mid \neq 2$ because every edge in $G_1$ is between an universal vertex and a non-universal vertex in $G_1$, by \emph{Theorem \ref{c3c5}.(iii)}. Similarly, $\mid P_{vw} \mid \neq 4$. Thus, the only possibility is $\mid P_{vw} \mid = 3$. Therefore, $(P_{vw},x)$ forms an induced $C_4$, which is a contradiction to $F$.    
\item[$\bullet$] $\mid S \mid > 1$. $S$ has at least two vertices, say $x, y \in S$. Our claim is to prove that $S$ is minimum.
\begin{itemize}
\item[-] $F = min \{ \mid U \mid + (\mid  T \mid -1), (\mid U \mid -1) + (\mid S \mid -1)\} = \mid U \mid + (\mid  T \mid -1)$ \\
On the contrary, assume that for some $a \in T$ there exists a set $M\subset (U \cup (T \backslash \{a\}))$ such that $\vert M \vert < F$ and $G\backslash M$ is a forest. Let $v \in U-M$. Then $(a,x,v,y)$ forms an induced $C_4$, which is a contradiction. Let $b \in T-M$ and $b\neq a$. Then $(a,x,b,y)$ forms an induced $C_4$, which is a contradiction.

\item[-] $F = min \{ \mid U \mid + (\mid  T \mid -1), (\mid U \mid -1) + (\mid S \mid -1)\} = (\mid U \mid -1) + (\mid S \mid -1)$ \\
On the contrary, assume that for some $v \in U$ there exists a set $M\subset (U \backslash \{v\}) \cup (S \backslash \{x\})$ such that $\vert M \vert < F$ and $G\backslash M$ is a forest. Let $w \in U-M$ and $w \neq v$. Then $(P_{vw},y)$ forms an induced $C_4$, which is a contradiction. Let $y \in S-M$. Then for any $a \in T$, $(a,x,v,y)$ forms an induced $C_4$, which is a contradiction.
\end{itemize}
\end{itemize}
\end{itemize}
From all the above cases, it is proved that $F$ is a minimum FVS. Hence, the theorem. $\hfill \qed$
\end{proof}

\begin{theorem}
\label{stc3c5}
Let $G$ be a connected $(2K_2, C_3, C_5)$-free graph, $R \subseteq V(G)$ be the terminal set of $G$ and $S$ be any MVS of $G$. Let $T$ be the set of all trivial components in $G\backslash S$, $U$ and $U'$ be the set of universal and non-universal vertices in a non-trivial component of $G\backslash S$, respectively. If $R$ is connected, then the Steiner tree $ST(G,R)$ is the graph induced on the vertex set $R$. If $R$ is not connected, then the Steiner tree $ST(G,R)$ is the graph induced on the vertex set

\begin{itemize}
\item $R \cup \{x\}$, for some $x \in S$, if $R\backslash T$ is connected or when $R$ is the subset of $T$ or $U$ or ($T\cup U$).
\item $R \cup \{a\}$, for some $a \in T$, when $R$ is the subset of $S$
\item $R \cup \{v\}$, where $v \in U$ is universal to $U'$, when $R$ is the subset of $U'$ or ($S\cup U'$) or ($U\cup U'$) or ($T\cup S \cup U'$) or ($S\cup U \cup U'$) or ($T \cup S\cup U \cup U'$).
\item $R \cup \{v\}\cup \{x\}$, for some $x \in S$ and a vertex $v \in U$ universal to $U'$, if $R\backslash T$ is connected or $R \subseteq T\cup U'$.
\end{itemize}
\end{theorem}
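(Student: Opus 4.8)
The entire argument runs off the adjacency picture supplied by Theorem~\ref{c3c5}. Fix the minimal vertex separator $S$; then $G$ is bipartite, $S$ is independent, the graph on $T\cup S$ is complete bipartite, the non-trivial component $G_1$ of $G\backslash S$ is bipartite with one side $U$ (each vertex of $U$ adjacent to all of $S$) and the other side $U'$ (no vertex of $U'$ has a neighbour in $S$), there are no edges between $T$ and $V(G_1)$, and there is a vertex $v^{*}\in U$ adjacent to every vertex of $U'$. I would first record the three hub facts that do all the work: (a) every $x\in S$ is adjacent to every vertex of $T\cup U$; (b) every $a\in T$ (and every vertex of $U$) is adjacent to every vertex of $S$; (c) $v^{*}$ is adjacent to every vertex of $S\cup U'$, and $\{x,v^{*}\}\in E(G)$ for every $x\in S$. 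From (a)--(c) I would prove a \emph{two-hub lemma}: for any terminal set $R$ the graph induced on $R\cup\{x,v^{*}\}$ (with $x\in S$ arbitrary, $U\neq\emptyset$) is connected, since every terminal in $T\cup U$ attaches to $x$, every terminal in $S\cup U'$ attaches to $v^{*}$, and $\{x,v^{*}\}$ is an edge; the degenerate case $U=\emptyset$ (so $U'=\emptyset$ and $R\subseteq S\cup T$) is handled identically with a vertex of $T$ replacing $v^{*}$. Consequently \emph{at most two Steiner vertices are ever needed}, and the theorem reduces to deciding, case by case, whether zero, one, or two are needed.

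\textbf{Feasibility of the listed sets.} If $R$ is connected, a spanning tree of $G[R]$ is a Steiner tree with no Steiner vertex, which is trivially optimal. If $R$ is disconnected I would check, bullet by bullet, that the indicated augmented set induces a connected subgraph (so its spanning tree is a Steiner tree of the claimed size), using only (a)--(c): when $R\subseteq T\cup U$ every terminal is adjacent to the added $x\in S$; when $R\subseteq S$ every terminal is adjacent to the added $a\in T$; when $R$ lies in $S\cup U'$ (or its relatives) every terminal is adjacent to $v^{*}$; and in each two-vertex case the two-hub lemma applies directly.

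\textbf{Optimality.} Since $R$ is disconnected, at least one Steiner vertex is required, so each one-vertex construction above is optimal. For the two-vertex cases I must exclude one-vertex solutions: a single Steiner vertex $z$ makes $G[R\cup\{z\}]$ connected only if $z$ has a neighbour in every component of $G[R]$. But $N_G(a)\subseteq S$ for all $a\in T$ and $N_G(w)\subseteq U$ for all $w\in U'$ while $S\cap U=\emptyset$, so no vertex of $G$ is adjacent both to a vertex of $T$ and to a vertex of $U'$. In the two-vertex cases (e.g. $R\subseteq T\cup U'$, where $T$ and $U'$ are independent and mutually anti-complete, hence each such terminal is isolated in $G[R]$) the set $R$ contains an isolated terminal in $T$ and an isolated terminal in $U'$; therefore no single $z$ can glue both components, two Steiner vertices are necessary, and the two-hub set is optimal. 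Assembling the cases, together with the routine observation that ``$R$ connected'', ``every component of $G[R]$ meets $T\cup U$'', and ``every component of $G[R]$ meets $S\cup U'$'' exhaust the zero/one Steiner vertex situations, yields the stated classification.

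\textbf{Main obstacle.} The structural facts make each individual verification short; the real work is the case analysis itself --- in particular, delimiting precisely when one hub suffices versus when two are forced (the conditions ``$R\backslash T$ is connected'' and ``$R\subseteq T\cup U'$'' overlap, and subtle situations arise when $G[R]$ has an isolated vertex inside $U$ or inside $S$), and separately handling the degenerate configurations $T=\emptyset$, $G_1$ absent, $\mid S\mid=1$, and $\mid U\mid=1$, in which some named sets are empty and the corresponding bullets collapse.
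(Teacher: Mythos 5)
Your proposal is correct in substance and follows the route the paper intends, namely deducing everything from the structural facts of Theorem~\ref{c3c5}; the difference is that the paper's entire proof is the sentence ``Trivially follows from Theorem~\ref{c3c5},'' so all the actual content here is yours. What your write-up adds is exactly what that one-liner hides: the explicit hub facts (every $x\in S$ sees all of $T\cup U$, every $a\in T$ and every $u\in U$ sees all of $S$, and the vertex $v^{*}\in U$ guaranteed by Theorem~\ref{c3c5}(v) sees all of $S\cup U'$), the two-hub lemma showing that $R\cup\{x,v^{*}\}$ is always connected and hence at most two Steiner vertices ever suffice, and the lower-bound argument via $N_G(a)\subseteq S$ for $a\in T$, $N_G(w)\subseteq U$ for $w\in U'$ (which needs, and you implicitly use, that every edge of $G_1$ joins $U$ to $U'$ by Theorem~\ref{c3c5}(iii)), and $S\cap U=\emptyset$, which certifies that two Steiner vertices are genuinely necessary when $R$ meets both $T$ and $U'$ in separate components of $G[R]$. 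You also correctly flag what the paper glosses over: the bullets in the statement overlap and are imprecise as written (for instance, ``$R\backslash T$ is connected'' appears in both the one-vertex and two-vertex cases, and the third bullet's ``$R\subseteq T\cup S\cup U'$'' fails when $R\cap S=\emptyset$, i.e.\ when $R\subseteq T\cup U'$, which is precisely the two-vertex case of the fourth bullet). The only incompleteness on your side is that the bullet-by-bullet case analysis and the degenerate configurations ($U=\emptyset$, $T=\emptyset$, $|S|=1$, no non-trivial component) are sketched rather than discharged, but the tools you set up are sufficient to finish them mechanically.
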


\begin{proof}
Trivially follows from \emph{Theorem \ref{c3c5}}. $\hfill \qed$
\end{proof}

\begin{theorem}
\label{dsc3c5}
Let $G$ be a connected $(2K_2, C_3, C_5)$-free graph and $S$ be any minimal vertex separator of $G$. Let $T$ be the set of all trivial components in $G\backslash S$, $U$ and $U'$ be the set of universal and non-universal vertices in a non-trivial component of $G\backslash S$, respectively. If $G\backslash S$ has only trivial components, then the dominating set is $\{x,a\}$, for some $x \in S$ and $a \in T$ when $\vert S \vert \geq 2$, and the dominating set is $S$ when $\vert S \vert =1$. If $G\backslash S$ has a non-trivial component, then the dominating set is $\{x,u\}$, for some $x \in S$ and $u \in U$ is universal to $U'$.
\end{theorem}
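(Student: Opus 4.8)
The plan is to leverage the structural characterization in Theorem~\ref{c3c5} and argue by cases on whether $G\backslash S$ has a non-trivial component, mirroring the case split in the statement. First I would recall the key consequences: by Theorem~\ref{c3c5}.(i) $S$ is independent, by Theorem~\ref{c3c5}.(iv) the graph induced on $T\cup S$ is complete bipartite, and by Theorem~\ref{c3c5}.(iii) and (v) every vertex of the non-trivial component $G_1$ is either universal to $S$ (the set $U$) or has no neighbour in $S$ (the set $U'$), and there is a vertex $u\in U$ universal to $U'$. These facts immediately tell us which vertices have large, "spread out" neighbourhoods, which is exactly what a small dominating set needs.

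For the case where $G\backslash S$ has only trivial components, I would argue as follows. If $|S|=1$, say $S=\{x\}$, then since $S$ separates $G$ and every trivial component is universal to $S$, $x$ dominates all of $T$ but $x$ itself needs a dominator; since $G$ is connected and $S=\{x\}$ is a cut vertex, picking $x$ together with any neighbour would work, but actually $x$ alone fails to dominate itself only if $x$ has no neighbour --- impossible --- so $x$ has a neighbour in some trivial component, and $\{x\}$ together with... here I must be careful: the statement claims the dominating set is $S=\{x\}$ when $|S|=1$, which forces every vertex to be in $N_G[x]$, i.e. $x$ is a universal vertex; this is the degenerate situation where $G\backslash S$ trivial components are all single vertices adjacent to $x$ and $x$ is adjacent to all of them, so indeed $\{x\}$ dominates $G$. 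When $|S|\geq 2$, pick any $x\in S$ and any $a\in T$ with $V(a)=\{w\}$: by Theorem~\ref{c3c5}.(iv), $x$ dominates all of $T\cup\{x\}$ since $T\cup S$ is complete bipartite and (wait --- $x$ must also dominate $S\setminus\{x\}$, which it does not, since $S$ is independent). This is where $a$ comes in: $w$ is universal to $S$, so $w$ dominates all of $S$, and $x$ dominates all of $T$; together $\{x,w\}$ dominates $T\cup S=V(G)$. Minimality (that no single vertex dominates $G$) follows because a single vertex cannot dominate both a non-neighbour in $S$ and a non-neighbour in $T$ simultaneously --- any vertex misses part of the independent side it does not lie in.

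For the case where $G\backslash S$ has a non-trivial component $G_1$, I would take $x\in S$ arbitrary and $u\in U$ universal to $U'$ (which exists by Theorem~\ref{c3c5}.(v)). Then $u$ dominates $U'\cup\{u\}\cup(\text{all of }U$? --- no, $U$ is independent in $G_1$ by Theorem~\ref{c3c5}.(iii), so $u$ dominates $U'\cup\{u\}$ only$)$. Meanwhile $x$, being adjacent to every vertex of $U$ (each vertex of $U$ is universal to $S$) and to every trivial component and to no vertex of $S\setminus\{x\}$... so $x$ dominates $U\cup T\cup\{x\}$. Then $S\setminus\{x\}$: every vertex of $S$ is adjacent to $u$ (since $u\in U$ is universal to $S$), so $u$ dominates $S$ as well. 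Combining, $\{x,u\}$ dominates $S\cup T\cup U\cup U'=V(G)$. For minimality I would again observe that a single vertex cannot dominate $G$: if the dominator lies in $S$ it misses $S\setminus\{\cdot\}$ (independent) unless $|S|=1$, and even then with a non-trivial component present it would need to dominate all of $U'$, forcing it into $G_1$, contradiction; a symmetric argument rules out any other single vertex.

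The main obstacle I anticipate is pinning down the $|S|=1$ subcase cleanly and verifying that the claimed two-element sets are genuinely minimum (size $2$) rather than merely dominating --- i.e. ruling out a dominating set of size $1$ in each non-degenerate configuration. The domination direction is a routine verification from the complete-bipartite and universality structure; the lower bound requires the small observation that in a graph containing an induced $P_3$ on an independent triple's worth of structure (two non-adjacent vertices with a common neighbour pattern on opposite sides of the bipartition), no single vertex is adjacent to all others. I would phrase this uniformly: whenever $G$ has at least two vertices on each side of the relevant bipartition and is not a star, no universal vertex exists, so $\gamma(G)\geq 2$, and the exhibited pair attains it.
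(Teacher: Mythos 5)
Your proposal is correct and follows essentially the same route as the paper: the paper's proof is simply ``trivially follows from Theorem~\ref{c3c5},'' and your argument is exactly the intended derivation from that structural theorem (independence of $S$, completeness of the $T$--$S$ bipartition, universality of $U$ to $S$, and the vertex $u\in U$ universal to $U'$), together with the easy lower bound $\gamma(G)\geq 2$ in the non-degenerate cases. You merely supply the verification details that the paper leaves implicit, and your brief wobble on the $|S|=1$ subcase is resolved correctly.
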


\begin{proof}
Trivially follows from \emph{Theorem \ref{c3c5}}. $\hfill \qed$
\end{proof}

\emph{Theorem \ref{fvsc3c5}}, \emph{Theorem \ref{stc3c5}} and \emph{Theorem \ref{dsc3c5}} naturally yields an algorithm to find a minimum FVS, Steiner tree and dominating set, respectively, in $O(n)$ time, which is linear in the input size.

\subsection{$(2K_2, C_4, C_5)$-free graphs}

$(2K_2, C_4, C_5)$-free graphs are $2K_2$-free graphs where every induced cycle is of length 3. This graphs can also be called as $2K_2$-free chordal graphs. Note that $2K_2$-free chordal graphs are known as split graphs. We know that the structural of any minimal $(a,b)$-vertex separator in chordal graphs is a clique. It is important to highlight that, the feedback vertex set problem is solvable in polynomial time, for chordal graphs \cite{cor}, a superclass of split graphs.

\begin{theorem}
\label{fvsc4c5}
Let $G$ be a connected $(2K_2, C_4, C_5)$-free graph and $S$ be any MVS of $G$, then a minimum FVS $FVS(G)$ is

\begin{itemize}
\item[(i)] $ V(G)\backslash \{x,y\}$, if $G$ is a complete graph, for some $x,y \in V(G)$.
\item[(ii)] $ S  \backslash \{v\}$, for some $v\in S$, if $G\backslash S$ has only trivial components.
\item[(iii)] $G\backslash S$ has a non-trivial component $G_1$ and $G_1$ is a tree. If there exist a vertex $v \in S$ such that $\mid N_G(v) \cap V(G_1) \mid = 1$, then $FVS(G)=S\backslash \{v\}$. If for every vertex $v \in S$, $\mid N_G(v) \cap V(G_1) \mid \geq 2$, then $FVS(G)=S$.

\item[(iv)] $\min\limits_{j \in S} \{ \mid S\backslash \{j\} \mid + FVS(G_1 \cup \{j\})\}$, if $G\backslash S$ has a non-trivial component $G_1$ and $G_1$ has at least one cycle.
\end{itemize}
\end{theorem}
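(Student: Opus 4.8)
The plan is to lean on the structure theorem for $2K_2$-free graphs (\emph{Theorem \ref{2k2pre}}) together with the fact, noted just above, that a $(2K_2,C_4,C_5)$-free graph is chordal, so every minimal vertex separator $S$ is a clique and $G\setminus S$ has at most one non-trivial component. The four items are exactly the exhaustive case split on the shape of $G\setminus S$, and in each I would establish matching lower and upper bounds on $|FVS(G)|$.

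The observation driving everything is that, since $G\setminus F$ must be a forest and $S$ is a clique, $G\setminus F$ keeps at most two vertices of $S$ (three pairwise adjacent vertices form a triangle). This settles (i): in a complete graph every induced forest has at most two vertices, so $FVS(G)=V(G)\setminus\{x,y\}$ for any $x,y$. For the other items I would repeatedly use two consequences of \emph{Theorem \ref{2k2pre}}: every trivial component of $G\setminus S$ is universal to $S$, and every edge of the non-trivial component $G_1$ is universal to $S$. Combining the latter with $C_4$-freeness shows that for each edge $\{a,b\}$ of $G_1$ at least one of $a,b$ is adjacent to \emph{both} vertices of any chosen pair $x,y\in S$, since otherwise $\{x,a,b,y\}$ would induce a $C_4$.

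For (ii) and (iii) I would compare the options ``$G\setminus F$ keeps $0$, $1$, or $2$ vertices of $S$''. Keeping two vertices $x,y\in S$ forces every trivial-component vertex (adjacent to both) into $F$ and, in (iii), forces $F\cap V(G_1)$ to be a vertex cover of $G_1$; a short count shows this option is never strictly better. Keeping exactly one vertex $v\in S$ makes the trivial-component vertices pendants at $v$, so one only needs $G_1\cup\{v\}$ to become a forest; since $G_1$ is itself a forest in these items this is free precisely when $|N_G(v)\cap V(G_1)|\le 1$ and otherwise costs at least one extra vertex, which is exactly the dichotomy $S\setminus\{v\}$ versus $S$ in (iii) (item (ii) has no $G_1$ and collapses to $S\setminus\{v\}$). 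Keeping no vertex of $S$ costs $|S|+FVS(G_1)\ge|S|$, never better.

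For (iv) let $k$ be the number of vertices of $S$ that $G\setminus F$ keeps; as above $k\in\{0,1,2\}$. If $k=0$ then $F\cap V(G_1)$ is an FVS of $G_1$, so $|F|\ge|S|+FVS(G_1)\ge(|S|-1)+FVS(G_1\cup\{j\})$ for every $j\in S$ (adding $j$ to an FVS of $G_1$ yields one of $G_1\cup\{j\}$). If $k=1$ with surviving vertex $j$, the trivial-component vertices hang off $j$ as leaves, so $F$ restricted to $V(G_1)\cup\{j\}$ is an FVS of $G_1\cup\{j\}$, giving $|F|\ge(|S|-1)+FVS(G_1\cup\{j\})$. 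If $k=2$, then $F$ contains every trivial-component vertex (at least one exists, as $G\setminus S$ has at least two components) together with a vertex cover of $G_1$; since deleting a minimum vertex cover of $G_1$ leaves an independent set, which stays a forest after re-adding one vertex, we have $FVS(G_1\cup\{j\})\le\tau(G_1)$ (with $\tau(G_1)$ the minimum vertex-cover size), and the count closes. Minimising over $j\in S$ gives the lower bound, and for the upper bound one fixes a minimiser $j^{*}$, a minimum FVS $F_1$ of $G_1\cup\{j^{*}\}$, and checks that $(S\setminus\{j^{*}\})\cup F_1$ is an FVS of $G$ of the stated size (again using that trivial-component vertices become leaves at $j^{*}$); since $G_1\cup\{j^{*}\}$ is an induced subgraph of $G$ it is again $(2K_2,C_4,C_5)$-free and strictly smaller, so the recursion is well founded. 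I expect the main obstacle to be the $k=2$ subcase of (iv) (equivalently the ``keeps two vertices of $S$'' analysis in (ii) and (iii)): making the count close requires precisely the interplay of $C_4$-freeness (which upgrades $F\cap V(G_1)$ from an FVS to a vertex cover of $G_1$), the bound $FVS(G_1\cup\{j\})\le\tau(G_1)$, and the existence of a trivial component in $G\setminus S$.
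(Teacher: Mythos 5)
Your proposal is correct, and it follows the same structural route as the paper: use the fact that $(2K_2,C_4,C_5)$-free graphs are split graphs, so the MVS $S$ is a clique, $G\setminus S$ has at most one non-trivial component, trivial components are universal to $S$, and every edge of $G_1$ is universal to $S$; then split into the four cases of the statement. Where you genuinely go beyond the paper is in the lower bounds. The paper's own proof is very terse: for (ii) and (iii) it argues only about how many separator vertices must be deleted, and for (iv) it simply says that one enumerates all feedback vertex sets inside $S\cup G_1$ and takes the minimum, without explaining why a solution that keeps two vertices of $S$ (and hence deletes vertices of $T$ and $G_1$ instead) can never beat the stated formula. Your analysis by the number $k\in\{0,1,2\}$ of surviving separator vertices closes exactly this gap: the observation that $C_4$-freeness plus the clique $S$ forces $F\cap V(G_1)$ to be a vertex cover of $G_1$ when $k=2$, combined with the inequalities $FVS(G_1\cup\{j\})\le FVS(G_1)+1$ and $FVS(G_1\cup\{j\})\le\tau(G_1)$ and the existence of at least one trivial component, is precisely what is needed to make the minimum over $j\in S$ a true lower bound, and your upper-bound construction $(S\setminus\{j^*\})\cup F_1$ matches it. So your write-up is a rigorous completion of the paper's sketch rather than a different method; the only cosmetic remark is that the dichotomy in (iii) is exhaustive because every vertex of $S$ has at least one neighbour in $G_1$ (any edge of $G_1$ is universal to $S$), a point worth stating explicitly.
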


\begin{proof}
\begin{itemize}
\item[(i)] The proof is obvious from the definition of complete graphs.
\item[(ii)] Since, $S$ is a clique, we have to remove at least $\mid S \mid -2$ vertices from $S$. Assume that the remaining edge in $S$ is $\{u,v\}$, after  the removal of $\mid S\mid -2$ vertices. We know that $G\backslash S$ has at least two components and given that every component in $G\backslash S$ is a trivial component. Thus, we have to remove any one vertex from $\{u,v\}$ such that all cycles formed between trivial components and an edge $\{u,v\}$ are removed.
\item[(iii)] By (ii), it is clear that we have to remove at least $\mid S \mid-1$ vertices from $S$. If there exists a vertex, $v$, in $S$ whose neighborhood in a non-trivial component is a singleton set, then the removal of $M = S\backslash \{v\}$ from $G$ creates a forest and thus, $FVS(G)=M$. If every vertex in $S$ has more than one vertex in $G_1$ as its neighbor, then $u$ forms at least one cycle along with $G_1$, thus, $FVS(G)= S$.
\item[(iv)] We enumerate all possible feedback vertex set in $S\cup G_1$, whose removal from $G$ results in a forest, and we choose the minimum among them. $\hfill \qed$
\end{itemize}
\end{proof}                                                                                                                                                                                                                                                                                                                                                                                                                                                                                                                                                                                                                                                                                                                                                                                                                                                                                                                                                                                                                                                                                                                                                                                                                                                                                                                         
\emph{Theorem \ref{fvsc4c5}} naturally yields an algorithm to find a minimum FVS in $O(n^2\delta)$ time. It is important to highlight that, the feedback vertex set problem is solvable in polynomial time, $O(n^5)$, for chordal graphs \cite{cor}, a superclass of split graphs.

\subsection{$(2K_2, C_3)$-free graphs}
$(2K_2, C_3)$-free graphs are $2K_2$-free graphs where every induced cycle is of length $4$ or 5. A structural observation is given below:

\begin{definition}
Let $G$ be a connected graph and $S$ be a minimal vertex separator for $G$. Let $G_1, \ldots, G_s$ be the connected components of $G\backslash S$. For some $u,v \in V(G_i)$, $P_{uv}^{i}$ denotes the shortest path between $u$ and $v$ in a graph $G$ such that all internal vertices belongs to $V(G_i)$.
\end{definition}

\begin{theorem}
\label{c3}
If $G$ is a connected $(2K_2, C_3)$-free graph, then for any minimal vertex separator $S$ of $G$ satisfies the following properties:

\begin{itemize}
\item[(i)] $S$ is an independent set.
\item[(ii)] If $G\backslash S$ has a non-trivial component $G_1$, then for  every vertex $x \in S$, $(N_G(x) \cap V(G_1))$ is an independent set. Moreover, $\mid P_{uv}^{1} \mid = 3$, for all $u,v \in (N_G(x) \cap V(G_1))$. 
\item[(iii)] If $\mid S \mid \geq 2$ and $G\backslash S$ has a non-trivial component $G_1$, then $G_1\backslash M$ is $P_4$-free, where $M = \{v \in V(G_1) \mid N_G(v) \cap S = \phi\}$. Moreover, $M$ is independent and there exist a unique vertex $u\in G_1\backslash M$ such that $u$ is universal to $M$.
\item[(iv)] If $G\backslash S$ has a non-trivial component, say $G_1$, then $G_1$ is $C_5$-free. Further, the graph induced on $G_1\cup S$ is $C_5$-free.
\end{itemize}
\end{theorem}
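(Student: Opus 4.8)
Throughout write $G_1$ for the non-trivial component of $G\backslash S$, whenever one exists. Every part rests on one consequence of \emph{Theorem \ref{2k2pre}}: since $S$ is a minimal vertex separator, $G\backslash S$ has at least two components, at most one of them non-trivial by \emph{Theorem \ref{2k2pre}.(i)}; hence $G\backslash S$ always has a trivial component $\{w\}$, and by \emph{Theorem \ref{2k2pre}.(ii)} this $w$ is universal to $S$ while, being isolated in $G\backslash S$, it is non-adjacent to every vertex outside $S$ --- in particular to every vertex of $V(G_1)$. Property $(i)$ is then immediate: an edge $\{p,q\}$ inside $S$ would make $(w,p,q)$ an induced $C_3$, which is forbidden. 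The first assertion of $(ii)$ is the same triangle argument applied to two adjacent vertices of $N_G(x)\cap V(G_1)$ together with $x$.

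For the second assertion of $(ii)$, take distinct $u,v\in N_G(x)\cap V(G_1)$; they are non-adjacent by the first assertion, so $\vert P_{uv}^{1}\vert\ge 3$. Being a shortest path inside the connected component $G_1$, $P_{uv}^{1}$ is chordless, hence an induced path, and since $G_1$ is an induced subgraph of a $2K_2$-free graph it is $P_5$-free, so $P_{uv}^{1}$ has at most $4$ vertices; if it had exactly $4$, say $(u,a,b,v)$, then $x\not\sim a$ (else $a\in N_G(x)\cap V(G_1)$ is adjacent to $u$, contradicting the first assertion) and likewise $x\not\sim b$, so $\{w,x\}$ and $\{a,b\}$ would induce a $2K_2$; hence $\vert P_{uv}^{1}\vert=3$. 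For $(iii)$: if two vertices of $M$ were adjacent, that edge would be universal to $S$ by \emph{Theorem \ref{2k2pre}.(iii)}, while by definition of $M$ neither endpoint meets $S$, forcing $S=\emptyset$; so $M$ is independent. To see that $G_1\backslash M$ is $P_4$-free, let $(a,b,c,d)$ be an induced $P_4$ on $V(G_1)\backslash M$ and pick $x\in S$ with $x\sim a$: then $x\not\sim b$ (triangle with $a$), hence $x\sim c$ (as the edge $\{b,c\}$ is universal to $S$), hence $x\not\sim d$ (triangle with $c$ along $\{c,d\}$); picking $y\in S$ with $y\sim d$, necessarily $y\ne x$, and arguing symmetrically gives $y\sim b$ and $y\not\sim a$, so $\{x,a\}$ and $\{y,d\}$ induce a $2K_2$. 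For the last clause (when $M\ne\emptyset$) I would first show every neighbour of an $M$-vertex is universal to $S$ --- otherwise, for $z\in M$, $u\in N_G(z)$ and some $x\in S$ with $x\not\sim u$, the edges $\{w,x\}$ and $\{z,u\}$ induce a $2K_2$ --- and then, by a similar $2K_2$-test on two disjoint edges $\{z,u\}$, $\{z',u'\}$ with $z,z'\in M$, that the neighbourhoods $\{N_G(z):z\in M\}$ are pairwise comparable under inclusion, hence form a chain; a member of the $\subseteq$-minimal one is then adjacent to all of $M$.

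For $(iv)$: if $G_1$ contained an induced $C_5$ $(c_1,c_2,c_3,c_4,c_5)$, then each of its five edges is universal to $S$ by \emph{Theorem \ref{2k2pre}.(iii)}, so for every $x\in S$ the set $N_G(x)\cap\{c_1,\dots,c_5\}$ is a vertex cover of the $5$-cycle (hence of size $\ge 3$) and also an independent set (else a triangle with $x$) --- impossible, since an independent set in $C_5$ has size at most $2$; so $G_1$ is $C_5$-free. Now let $C$ be an induced $C_5$ in $G[V(G_1)\cup S]$; as $S$ is independent, $C$ meets $S$ in $0$, $1$, or $2$ vertices, and $0$ is the case just settled. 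If exactly one, say $c_1\in S$ and $c_2,\dots,c_5\in V(G_1)$ with $C=(c_1,c_2,c_3,c_4,c_5)$, then $c_2,c_5\in N_G(c_1)\cap V(G_1)$, so by $(ii)$ they have a common neighbour $m\in V(G_1)$, which one checks lies outside $\{c_1,\dots,c_5\}$; then $m\sim c_3$ or $m\sim c_4$ would force a triangle ($(m,c_2,c_3)$ or $(m,c_4,c_5)$), while $m\not\sim c_3,c_4$ makes $(m,c_2,c_3,c_4,c_5)$ an induced $C_5$ in $G_1$, contradicting the previous line. If exactly two, they are non-consecutive on $C$, say $c_1,c_3\in S$ and $c_2,c_4,c_5\in V(G_1)$ with $C=(c_1,c_2,c_3,c_4,c_5)$; by $(ii)$ there are $p\in V(G_1)$ adjacent to $c_2,c_5$ and $p'\in V(G_1)$ adjacent to $c_2,c_4$, and if $p=p'$ then $(p,c_4,c_5)$ is a triangle, while if $p\ne p'$ then $(c_2,p,c_5,c_4,p')$ is an induced $C_5$ in $G_1$ (its non-edges all following from triangle-freeness and from $(ii)$) --- again impossible. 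Hence $G[V(G_1)\cup S]$ is $C_5$-free.

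The steps I expect to cost the most work are pinning down $\vert P_{uv}^{1}\vert=3$ with a genuinely induced $2K_2$ (once $G_1$ may carry vertices off the path), the ``two vertices of $C$ in $S$'' case of $(iv)$ where the auxiliary vertices $p,p'$ supplied by $(ii)$ must be exhibited and shown distinct from the cycle, and the uniqueness clause of $(iii)$.
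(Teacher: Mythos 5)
Your proposal follows essentially the same route as the paper: every item is reduced, via Theorem~\ref{2k2pre}, to exhibiting a forbidden $C_3$, $C_5$ or induced $2K_2$, with the universal trivial-component vertex $w$ playing the same role it plays in the paper's proof of Theorem~\ref{c3c4}. You differ only in the local certificates, and each of your versions is sound and, if anything, more detailed than the printed one: for $\vert P^{1}_{uv}\vert=3$ you exhibit the $2K_2$ on $\{w,x\}$ and the middle edge $\{a,b\}$, where the paper instead applies universality of that middle edge to $S$ and gets a triangle; for $P_4$-freeness of $G_1\backslash M$ you produce the $2K_2$ on $\{x,a\},\{y,d\}$ directly instead of the paper's induced $P_5$; your vertex-cover versus independent-set count for $C_5$-freeness of $G_1$ is equivalent to the paper's two-case split; and you actually carry out the cases in which an induced $C_5$ of the graph on $V(G_1)\cup S$ meets $S$ in one or two vertices, which the paper dismisses with ``since $S$ is independent''. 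Your existence argument for a vertex of $G_1\backslash M$ universal to $M$ also goes through; the only point left implicit is that in the $2K_2$-test on $\{z,u\},\{z',u'\}$ the non-adjacency $u\not\sim u'$ needs a reason, and it has one: you have already shown every neighbour of an $M$-vertex is universal to $S$, so $u\sim u'$ would create a triangle through any $x\in S$, and $S\neq\emptyset$. With that remark the chain-of-neighbourhoods argument is complete and is considerably more explicit than the paper's one-line appeal to $2K_2$-freeness.

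The one clause you defer, uniqueness of the universal vertex in (iii), is the one place where you could not have succeeded: it is false as stated, and the paper's justification (``it is unique by (ii)'') does not survive scrutiny. Take $G$ to be $K_{3,3}$ minus one edge: $V(G)=\{x,y,z\}\cup\{w,c,c'\}$ with all cross edges present except $\{z,w\}$. This graph is bipartite, hence $C_3$-free, and it is $2K_2$-free, since an induced $2K_2$ would require two distinct missing cross pairs while only $\{z,w\}$ is missing. Here $S=\{x,y\}$ is a minimal $(w,z)$-separator, $G\backslash S$ has the trivial component $\{w\}$ and the non-trivial component $G_1$ on $\{c,z,c'\}$, and $M=\{z\}$; both $c$ and $c'$ are universal to $M$ (and to $S$), so existence holds but uniqueness fails, consistently with (ii) since $c,c'\in N_G(x)\cap V(G_1)$ are non-adjacent and at distance two through $z$. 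So your proof in fact covers (i), (ii), (iv) and all of (iii) except a clause that should be weakened to existence only; flagging uniqueness as the problematic step was the right instinct rather than a gap in your argument.
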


\begin{proof}
\begin{itemize}
\item[(i)] The argument is similar to the proof in \emph{Theorem \ref{c3c4}.(i)}.
\item[(ii)] The argument is similar to the proof in \emph{Theorem \ref{c3c4}.(iii)}. Let $u$ and $v$ be any two vertices in $(N_G(x) \cap V(G_1))$. Our claim is to prove that $\mid P_{uv}^{1} \mid = 3$. On the contrary, assume that $\mid P_{uv}^{1} \mid = 4$ (Since, $G$ is $2K_2$-free, $\mid P_{uv}^{1} \mid \ngeq 5$), say $P_{uv}^{1} = (u,w,s,v)$. We know that in a $2K_2$-free graph, every edge in a non-trivial component is universal to $S$. Thus, either $\{w,x\} \in E(G)$ or $\{s,x\} \in E(G)$. If $\{w,x\} \in E(G)$, then $(u,w,x)$ forms an $C_3$ or if $\{s,x\} \in E(G)$, then $(x,s,v)$ forms an $C_3$, which is a contradiction to the definition of $G$.
\item[(iii)] On the contrary, assume that $G_1\backslash M$ has an induced $P_4$, say $P_4 = (u,v,w,s)$. Choose any two vertices $x$ and $y$ from $S$. Either $\{x,u\}, \{x,w\}, \{y,v\}, \{y,s\} \in E(G)$, where $P = (x,u,v,y,s)$ forms an induced $P_5$ ($P$ is induced by $(ii)$) or $\{y,u\}, \{y,w\}, \{x,v\}, \{x,s\} \in E(G)$, where $P' = (y,u,v,x,s)$ forms an induced $P_5$ ($P'$ is induced by $(ii)$), which is a contradiction to the definition of $G$. $M$ is independent because of the fact every edge in $G_1$ is universal to $S$. The existence of universal vertex to $M$ in $G_1\backslash M$ is true by the fact $G$ is $2K_2$-free and it is unique by (ii).
\item[(iv)] On the contrary, assume that $G_1$ has an induced $C_5 = (u_1,u_2, u_3,u_4,u_5)$. Choose a vertex $x\in S$. Since, every edge in $G_1$ is universal to $S$, any one of the following is true:
\begin{itemize}
\item[$\bullet$] $\{u_1,x\}, \{u_3,x\}, \{u_5,x\} \in E(G)$, then $(u_1,u_5,x)$ forms a $C_3$.

\item[$\bullet$] $\{u_2,x\}, \{u_4,x\} \in E(G)$, then the edge $\{u_1,u_5\}$ is not universal to $S$.
\end{itemize}
\end{itemize}
\noindent Both contradicts the definition of $G$. Since $S$ is independent, the graph induced on $G_1 \cup S$ is also $C_5$-free.  $\hfill \qed$
\end{proof}

\emph{Theorem \ref{c3}} naturally yields an algorithm to find the FVS, which is described as follows. Finding a FVS in a $(2K_2,C_3)$-free graph is same as finding a FVS in $ (S\cup G_1)$, say $A$, and in $G\backslash A$, which is a recursive call and the recursion bottoms out when it returns a bipartite graph, $(2K_2,C_3,C_5)$-free graph. This can be done in polynomial time.

%
%

\begin{theorem}
\label{stc3}
Let $G$ be a connected $(2K_2, C_3)$-free graph, $R \subseteq V(G)$ be the terminal set of $G$ and $S$ be any minimal vertex separator of $G$. Let $T$ be the set of all trivial components in $G\backslash S$. If $R$ is connected, then the Steiner tree $ST(G,R)$ is the graph induced on the vertex set $R$. If $R$ is not connected, then the Steiner tree $ST(G,R)$ is the graph induced on the vertex set

\begin{itemize}
\item $R \cup \{x\}$, for some $x \in S$, if $R\subseteq T$.
\item $R \cup \{a\}$, for some $a \in T$, if $R\subseteq S$.
\item $\min\limits_{\forall ~x_i \in S}\{ST([S\cup V(G_1)], R\cup \{x_i\})\}$, if $R\subseteq (T\cup G_1)$.
\item $ST([S\cup V(G_1)], R)$, if $R$ is the subset of $G_1$ or $(S\cup G_1)$ or $(T\cup S \cup G_1)$.
\end{itemize}
\end{theorem}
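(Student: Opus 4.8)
The plan is to lean almost entirely on the structural description of $(2K_2, C_3)$-free graphs given in \emph{Theorem \ref{c3}}, exactly as \emph{Theorem \ref{stc3c5}} leaned on \emph{Theorem \ref{c3c5}}. First I would dispose of the connected case: if $R$ induces a connected subgraph, any Steiner tree must contain at least the vertices of $R$, and since $[R]$ is connected it contains a spanning tree, so $ST(G,R)=[R]$ is optimal with zero Steiner vertices. For the disconnected case I would fix a minimal vertex separator $S$, let $G_1$ be the (at most one) non-trivial component of $G\backslash S$ and $T$ the set of trivial components, and argue case by case according to where $R$ sits, in each case exhibiting a single connecting vertex (or showing the recursive instance on $[S\cup V(G_1)]$ already captures the optimum) and then arguing no Steiner tree can do better.

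The key steps, in order: (1) $R\subseteq T$. Since $S$ is a vertex separator and every trivial component is universal to $S$ (Theorem \ref{2k2pre}(ii)), picking any single $x\in S$ makes $R\cup\{x\}$ connected; and at least one Steiner vertex is forced because distinct trivial components are pairwise non-adjacent, so $R\cup\{x\}$ is optimal. (2) $R\subseteq S$. Symmetrically, any $a\in T$ is universal to $S$, so $R\cup\{a\}$ is connected; optimality follows because $S$ is an independent set by \emph{Theorem \ref{c3}}(i), so at least one outside vertex is needed to join two terminals of $S$, and one suffices. (3) $R\subseteq T\cup V(G_1)$. Here every terminal in $T$ must be joined through $S$, and once we commit to a particular $x_i\in S$ the trivial terminals are all hung off $x_i$ at unit cost each, so the problem collapses to a Steiner tree instance inside $[S\cup V(G_1)]$ with terminal set $R\cup\{x_i\}$; taking the minimum over $x_i\in S$ is therefore correct, and one checks no solution avoiding $S$ entirely can connect a $T$-terminal to anything, justifying that $S$ is the right ``gateway''. (4) $R\subseteq V(G_1)$, or $R\subseteq S\cup V(G_1)$, or $R\subseteq T\cup S\cup V(G_1)$ with the $T$-part already connected through $S$: then the whole terminal set together with its connections lives inside $[S\cup V(G_1)]$ (using that $G\backslash(S\cup V(G_1))$ consists only of trivial components that attach to $S$), so $ST(G,R)=ST([S\cup V(G_1)],R)$, which is a strictly smaller instance on which we recurse.

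For each case the argument has two halves: (a) feasibility, the displayed vertex set induces a connected subgraph, which follows from the universality facts in \emph{Theorem \ref{2k2pre}} and the independence/path-length facts in \emph{Theorem \ref{c3}}; and (b) optimality, any competing Steiner tree needs at least as many non-terminal vertices, which follows because whenever two terminals lie in distinct ``blocks'' ($T$, $S$, or $G_1$-side) the separator structure forces at least one connecting vertex of the prescribed type, and the $P_4$-freeness of $G_1\backslash M$ together with the existence of a unique vertex universal to $M$ (Theorem \ref{c3}(iii)) bounds how the $G_1$-internal routing can behave. I expect the main obstacle to be case (3)/(4) bookkeeping: carefully showing that once the ``gateway'' vertex $x_i\in S$ is chosen, an optimal Steiner tree for the original instance decomposes with no interaction between the part inside $[S\cup V(G_1)]$ and the trivially attached $T$-terminals — i.e. that the recursion genuinely computes a global optimum rather than just a feasible tree — and handling the subtlety that the correct choice of $x_i$ may depend on how $R\cap V(G_1)$ is routed. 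Everything else reduces to the structural theorem, so the write-up can be kept short, mirroring the proof of \emph{Theorem \ref{stc3c5}}.
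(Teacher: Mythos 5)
Your proposal follows essentially the same route as the paper: the paper's entire proof is the single line ``Trivially follows from \emph{Theorem \ref{c3}}'', and your case-by-case derivation of feasibility (via the universality properties of Theorem \ref{2k2pre} and the independence of $S$) and optimality is exactly the elaboration that one-liner presupposes. If anything, your write-up is more careful than the paper's, since you explicitly flag the decomposition issue in the $R\subseteq T\cup G_1$ case (whether the minimum over the gateway $x_i\in S$ really captures a global optimum), which the paper does not address at all.
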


\begin{proof}
Trivially follows from \emph{Theorem \ref{c3}}. $\hfill \qed$
\end{proof}

\begin{theorem}
\label{dsc3}
Let $G$ be a connected $(2K_2, C_3)$-free graph and $S$ be any minimal vertex separator of $G$. Let $T$ be the set of all trivial components in $G\backslash S$. If $G\backslash S$ has only trivial components, then the dominating set is $\{x,a\}$, for some $x \in S$ and $a \in T$ when $\vert S \vert \geq 2$, and the dominating set is $S$ when $\vert S \vert =1$. If $G\backslash S$ has a non-trivial component, then the dominating set is $\min\limits_{\forall ~ x_i \in S}\{\{x_i\}\cup \{a\} \cup D_i \}$, where $a \in T$ and $D_i$ is the dominating set of the graph induced on $(S\cup V(G_1)) \backslash (x_i \cup N_G(x_i))$, which is $2K_2$-free chordal bipartite graph.
\end{theorem}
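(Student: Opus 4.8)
The plan is to deduce the structure of a dominating set directly from Theorem \ref{c3}, splitting into the two cases already announced in the statement. First I would dispose of the easy case where $G\backslash S$ has only trivial components: by Theorem \ref{c3}.(i), $S$ is an independent set, and every trivial component is universal to $S$ by Theorem \ref{2k2pre}.(ii), so the graph induced on $T\cup S$ is complete bipartite with parts $S$ and $T$. Hence a single vertex $x\in S$ dominates all of $T$, and a single vertex $a\in T$ dominates all of $S$; together $\{x,a\}$ dominates $T\cup S=V(G)$, and no single vertex can dominate $V(G)$ when $|S|\ge 2$ (an $x\in S$ misses the other vertices of $S$, a vertex $a\in T$ misses the other vertices of $T$, and there is nothing else), so $\{x,a\}$ is optimal; when $|S|=1$, writing $S=\{x\}$, $x$ is universal to all of $T=V(G)\setminus\{x\}$, so $S$ itself is a dominating set of size $1$, which is clearly optimal.

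Next I would handle the case where $G\backslash S$ has a non-trivial component $G_1$. Here the claim is $\min_{x_i\in S}\{\{x_i\}\cup\{a\}\cup D_i\}$ where $a\in T$ and $D_i$ is a minimum dominating set of the graph $G_i'$ induced on $(S\cup V(G_1))\setminus(\{x_i\}\cup N_G(x_i))$. I would first argue that $G_i'$ is indeed a $2K_2$-free chordal bipartite graph: it is $2K_2$-free as an induced subgraph of $G$; it contains no $C_3$ since $G$ is $C_3$-free; and by Theorem \ref{c3}.(iv) the graph induced on $G_1\cup S$ is $C_5$-free, so $G_i'$ has no induced $C_5$ either, hence every induced cycle in $G_i'$ has length $4$ — this is exactly the $(2K_2,C_3,C_5)$-free class of Section 4.2, for which a dominating set is already given by Theorem \ref{dsc3c5}, so $D_i$ is computable. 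Then I would show correctness of the formula: any optimal dominating set $D$ of $G$ must contain a vertex that dominates the trivial components, and since each trivial component is universal to all of $S$ (by Theorem \ref{2k2pre}.(ii)) while a vertex of $G_1$ can be adjacent to at most the vertices of $S$ in its neighborhood, the economical choice is to place one vertex $x_i\in S$ into $D$; having chosen $x_i$, the vertex $x_i$ together with its neighborhood is removed from consideration, one extra vertex $a\in T$ is needed to cover the remaining vertices of $S$ (all trivial components being dominated by $a$ as well, since $a$ is universal to $S$), and what is left to dominate is precisely $G_i'$, optimally covered by $D_i$. Minimizing over the choice of $x_i\in S$ gives the stated formula.

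The main obstacle I expect is the correctness argument in the non-trivial case — in particular justifying that an optimal solution can be taken to contain exactly one vertex of $S$ "guarding" the trivial components, rather than covering the trivial components via vertices of $G_1$ or via several vertices of $S$. The key leverage is the universality constraints from Theorem \ref{c3} and Theorem \ref{2k2pre}: trivial components see all of $S$, every edge of $G_1$ is universal to $S$, and every vertex of $G_1$ is adjacent to at most one "side" of the bipartition of $N_G(x)\cap V(G_1)$. One must rule out the degenerate subcase $T=\emptyset$ (i.e. $G\backslash S$ consists only of $G_1$): then no vertex $a\in T$ exists and the formula should instead read $\min_{x_i\in S}\{\{x_i\}\cup D_i\}$ — I would note this adjustment, observing that when $T=\emptyset$ a chosen $x_i$ already dominates all of $N_G(x_i)$ and the residual graph $G_i'$ carries the rest, so the extra vertex $a$ is simply unnecessary. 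Since the preceding theorem for the $(2K_2,C_3,C_5)$-free subclass supplies $D_i$ in $O(n)$ time and $|S|\le n$, the recursion gives a polynomial-time algorithm; the remaining details are the routine case checks sketched above, so the proof in the paper legitimately reads "trivially follows from Theorem \ref{c3}."
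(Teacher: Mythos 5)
Your handling of the easy case ($G\backslash S$ with only trivial components) is correct and complete, and your identification of the residual graph on $(S\cup V(G_1))\backslash(\{x_i\}\cup N_G(x_i))$ as $(2K_2,C_3,C_5)$-free (via \emph{Theorem \ref{c3}.(iv)}), so that $D_i$ is supplied by \emph{Theorem \ref{dsc3c5}}, is exactly the structural route the paper intends --- the paper itself gives no detail beyond ``trivially follows from \emph{Theorem \ref{c3}}'', so these are the right ingredients. One small point: your worry about $T=\emptyset$ is vacuous. Since $S$ is a minimal vertex separator, $G\backslash S$ has at least two components, and by \emph{Theorem \ref{2k2pre}.(i)} at most one of them is non-trivial; hence $T\neq\emptyset$ whenever $G_1$ exists, and no adjusted formula is needed for that case.

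The genuine gap is the minimality argument in the non-trivial case. Your justification that an optimum solution spends one vertex $x_i\in S$, one vertex $a\in T$, and then exactly $\vert D_i\vert$ further vertices is only the phrase ``the economical choice''; no matching lower bound is proved, and the interaction between $a$ and $D_i$ is never examined. Note that $D_i$ is by definition a dominating set of the induced graph that \emph{contains} $S\backslash\{x_i\}$ (recall $S$ is independent, so no vertex of $S$ lies in $N_G(x_i)$), so $\{x_i\}\cup D_i$ already dominates all of $G$; the mandatory extra vertex $a$ is therefore not needed for feasibility, and your claim that it is ``needed to cover the remaining vertices of $S$'' is precisely the unproved step --- it fails outright when $\vert S\vert=1$. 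Concretely, take $G=P_4$ with vertices $a\,$--$\,x\,$--$\,u\,$--$\,v$ and $S=\{x\}$: then $T=\{a\}$, $G_1=\{u,v\}$, the residual graph is $\{v\}$, and the recipe you defend outputs $\{x,a,v\}$ of size $3$, whereas $\{x,u\}$ is a dominating set of size $2$. So the formula, read literally, is not minimum, and any correct proof must either repair/reinterpret it (e.g.\ drop $a$, or let $D_i$ dominate only $V(G_1)\backslash N_G[x_i]$ with $a$ covering $S\backslash\{x_i\}$, and then prove optimality of the better of the two bookkeepings) or prove a lower bound of $2+\vert D_i\vert$, which the example shows is false in general. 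This is exactly the content hidden behind the paper's ``trivially follows'', and your sketch neither notices nor resolves it.
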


\begin{proof}
Trivially follows from \emph{Theorem \ref{c3}}. $\hfill \qed$
\end{proof}

 It is easy to see that the \emph{Theorem \ref{stc3}} and \emph{Theorem \ref{dsc3}} yields a linear time algorithm to find a Steiner tree and dominating set, respectively.


\subsection{$(2K_2, C_4)$-free graphs}
$(2K_2, C_4)$-free graphs are $2K_2$-free graphs where every induced cycle is of length 3 or 5. The structural observations for this graph class are as follows:
\begin{theorem}
\label{c4}
If $G$ is a connected $(2K_2, C_4)$-free graph, then for any minimal vertex separator $S$ of $G$ satisfies the following properties:
\begin{itemize}
\item[(i)] $S$ is connected except if $G$ is an induced $C_5$ or $K_{1,m}, m \geq 2$.
\item[(ii)] $S$ is connected and has a non-trivial component, $G_1$, in $G\backslash S$. If a vertex $x \in V(G_1)$ is adjacent to a vertex $u \in S$, then $(N_G(u)\cap S) \subseteq N_G(x)$.
\item[(iii)] If $S$ is not a clique, then $G\backslash S$ has exactly one trivial component. Moreover, every vertex in a non-trivial component of $G\backslash S$ is not universal to any non-adjacent pair of vertices in $S$.
\item[(iv)] If $S$ is not a clique, then the only possibility of a non-trivial component of $G\backslash S$ is $K_2$.
\item[(v)] The size of the maximum independent set of the graph induced on $S$ is at most two.
\item[(vi)] $S$ contains neither $P_4$ nor $K_{1,m}$, $m \geq 3$.
\end{itemize}
\end{theorem}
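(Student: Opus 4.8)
The plan is to combine the $2K_2$-free structure theorem (\emph{Theorem \ref{2k2pre}}) with two elementary consequences of $C_4$-freeness and then read off (i)--(vi). First, if $\{a,b\}$ is a non-edge of $G$ then $N_G(a)\cap N_G(b)$ induces a clique, since two non-adjacent common neighbours $c,d$ would yield the induced $C_4$ $(a,c,b,d)$. Second, if $[S]$ is not a clique then $G\backslash S$ consists of exactly one trivial component $\{w\}$ (universal to $S$) and exactly one non-trivial component $G_1$ (all of whose edges are universal to $S$): two trivial components $\{w_1\},\{w_2\}$ would both be universal to $S$ with $w_1\not\sim w_2$, so a non-edge $\{x,y\}$ of $S$ would give the induced $C_4$ $(x,w_1,y,w_2)$; hence there is at most one trivial component, and this combines with ``at most one non-trivial component'' (\emph{Theorem \ref{2k2pre}}(i)) and ``a separator leaves at least two components''. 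From these I extract a \emph{split lemma}: for any non-edge $\{x,y\}$ of $S$ and any edge $\{p,q\}$ of $G_1$, exactly one of $x,y$ is adjacent to $p$ and the other to $q$ --- they cannot share an endpoint $p$ (else $p,w$ are non-adjacent common neighbours of $\{x,y\}$), and neither can be adjacent to both (same argument with the other vertex).

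From here (iii) and (v) are short: the first clause of (iii) is the second consequence above, and for the rest a vertex $v$ of a non-trivial component universal to a non-edge $\{x,y\}$ of $S$ gives, with the trivial component $w$, the induced $C_4$ $(x,v,y,w)$; for (v), if $x,y,z\in S$ are pairwise non-adjacent then, picking an edge $\{p,q\}$ of $G_1$, two of them are adjacent to a common endpoint by pigeonhole, contradicting the split lemma, so $\alpha([S])\le 2$, which immediately excludes $K_{1,m}$, $m\ge 3$, in (vi). An induced $P_4$ on $a,b,c,d$ (in this order) in $[S]$ is ruled out by applying the split lemma to the non-edges $\{a,c\}$, $\{a,d\}$, $\{b,d\}$ against any edge $\{p,q\}$ of $G_1$: this pins down an endpoint $p$ with $a,b\sim p$, $c,d\not\sim p$ and an endpoint $q$ with $c,d\sim q$, $a,b\not\sim q$, whereupon $(p,b,c,q)$ is an induced $C_4$.

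For (iv), assume $S$ is not a clique, so $G_1$ is the unique non-trivial component, and suppose for contradiction $|V(G_1)|\ge 3$. If $G_1$ is a clique containing $v_1,v_2,v_3$, the split lemma applied to a fixed non-edge $\{x,y\}$ of $S$ against $\{v_1,v_2\},\{v_1,v_3\},\{v_2,v_3\}$ makes $x$ adjacent to exactly one of $v_1,v_2$, exactly one of $v_1,v_3$, and exactly one of $v_2,v_3$ --- impossible. Otherwise $G_1$ has an induced $P_3$ on $v_1,v_2,v_3$, and the split lemma against $\{v_1,v_2\}$ and $\{v_2,v_3\}$ forces $x\sim v_1$, $x\sim v_3$, $x\not\sim v_2$ (after possibly swapping $x,y$), so $(v_1,x,v_3,v_2)$ is an induced $C_4$; hence $G_1\cong K_2$. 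Finally I would prove (i) by contrapositive: if $[S]$ is disconnected then $|S|\ge 2$, \emph{Theorem \ref{2k2pre}}(iv) supplies an isolated vertex of $[S]$ so $[S]$ is not a clique, the above applies with $G_1\cong K_2=\{p,q\}$, and (v) forces $[S]$ to have exactly two components, each a clique, $[S]=K_r\cup K_t$ with no edges between them; the split lemma then pushes all of $K_r$ into $N(p)\setminus N(q)$ and all of $K_t$ into $N(q)\setminus N(p)$ (or vice versa), after which, if $r\ge 2$, a clique-edge of $K_r$ and the edge joining some vertex of $K_t$ to $q$ form an induced $2K_2$, so $r=1$ and symmetrically $t=1$, leaving $V(G)=\{p,q,w,x_1,y_1\}$ with exactly the edges of the cycle $x_1\,p\,q\,y_1\,w\,x_1$, i.e.\ $G\cong C_5$ (the star $K_{1,m}$ being the degenerate exception, whose only minimal separator is its connected centre). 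The crux is this last step: one must use $2K_2$-freeness rather than just $C_4$-freeness to collapse $K_r$ and $K_t$ to single vertices, and one must exhaust the interaction of $[S]$ with $G\backslash S$ carefully so the exceptional list comes out to be precisely $C_5$ and $K_{1,m}$.
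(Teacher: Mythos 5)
Your proposal never addresses item (ii) of the theorem. All of your machinery (the ``common neighbours of a non-edge induce a clique'' observation, the decomposition into exactly one trivial plus one non-trivial component, and the split lemma) operates only in the regime where $[S]$ contains a non-edge, but (ii) asserts something that must also hold when $[S]$ is a clique (e.g.\ in split-like instances): if $x \in V(G_1)$ is adjacent to $u \in S$, then every $v \in N_G(u)\cap S$ is adjacent to $x$. Neither of your two tools yields this; knowing that $N_G(x)\cap N_G(v)$ is a clique when $x \not\sim v$ gives no contradiction from $u$ alone. A separate argument is required, along the lines of the paper's: pick a neighbour $y$ of $x$ inside the non-trivial component, use the fact that the edge $\{x,y\}$ is universal to $S$ (\emph{Theorem \ref{2k2pre}}(iii)) to force $y \sim v$, and then show $(x,u,v,y)$ would be an induced $C_4$ --- which still needs care to exclude the chord $\{u,y\}$ (a point the paper itself glosses over). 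As written, you prove five of the six items, so there is a genuine gap.

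For the items you do treat, the route is correct and in places tighter than the paper's. You obtain ``exactly one trivial and exactly one non-trivial component when $[S]$ is not a clique'' and the split lemma directly from \emph{Theorem \ref{2k2pre}} and $C_4$-freeness, so your proofs of (iii)--(vi) do not presuppose connectivity of $[S]$, whereas the paper's proofs of (iii) and (iv) extract an induced $P_3$ from $S$, which implicitly relies on (i). Your contrapositive treatment of (i) is also more careful: it pins the exceptional graph down to $C_5$ (correctly using $2K_2$-freeness, not just $C_4$-freeness, to collapse the two cliques of $[S]$ to single vertices), and you rightly note the $K_{1,m}$ exception is vacuous since its only minimal separator is a single vertex; by contrast, the paper's argument for (i) asserts that one endpoint of an edge of $G_1$ is universal to $S$ when $|S|=2$, which fails exactly in the $C_5$ case. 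So: supply a proof of (ii), and the rest of your argument stands.
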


\begin{proof}
\begin{itemize}
\item[(i)] On the contrary, assume that $G[S]$ has at least two components. Choose two vertices $x$ and $y$ from different components of $G[S]$. If $G\backslash S$ has only trivial components, then $x$, $y$ and any two trivial components from $G\backslash S$ forms $C_4$, which is a contradiction. If $G\backslash S$ has a non-trivial component, $G_1$, then choose an edge $\{u,v\} \in E(G_1)$. If $\mid S \mid = 2$, then either $u$ is universal to $S$ or $v$ is universal to $S$. W.l.o.g, assume that $u$ is universal to $S$. Thus, $u,x,y$ and a trivial component in $G\backslash S$ forms a $C_4$, which is a contradiction to the definition of $G$. If $\mid S \mid \geq 3$, then either $\mid N_G(u) \cap S \mid \geq 2$ or $\mid N_G(v) \cap S \mid \geq 2$. W.l.o.g, assume that $\mid N_G(u) \cap S \mid \geq 2$. Let $x,y \in (N_G(u) \cap S)$. Thus, $u,x,y$ and a trivial component in $G\backslash S$ forms a $C_4$, which is a contradiction to the definition of $G$.
\item[(ii)] On the contrary, assume that $\{x,v\} \notin E(G)$ for some $v \in (N_G(u)\cap S)$. Since, $G$ is $2K_2$-free and $G_1$ is a non-trivial component, there exists a vertex $y \in G_1$ such that $\{x,y\}, \{y,v\} \in E(G)$. Thus, $(x,u,v,y)$ forms an induced $C_4$, which is a contradiction.
\item[(iii)] On the contrary, assume that $G\backslash S$ has more than one trivial component. Let $\{u\}$ and $\{v\}$ be any two trivial components in $G\backslash S$. Since $S$ is not a clique, $S$ contains a $P_3=(x,y,z)$. Since, $G$ is a $2K_2$-free graph, $\{u,x\},\{u,z\},\{v,x\},\{v,z\} \in E(G)$. Thus, $(u,x,v,z)$ forms an induced $C_4$, which is a contradiction to the definition of $G$. Moreover, if there exists a vertex, $u$, in a non-trivial component of $G\backslash S$ is universal to some non-adjacent pair $(x,z)$ in $S$ and if $\{v\}$ is a trivial component of $G\backslash S$, then $(u,x,v,z)$ forms an induced $C_4$, which is a contradiction.
\item[(iv)] Since $S$ is not a clique, $S$ contains a $P_3$, say $P_3 = (x,y,z)$. On the contrary, assume that the non-trivial component of $G\backslash S$, $G_1$, contains either $K_3=(u,v,w)$ or $P_3=(u,v,w)$. Consider an edge $\{u,v\}$, since every edge in $G_1$ is universal to $S$, either $\{u,x\},\{u,y\}, \{v,y\},\{v,z\} \in E(G)$ or $\{v,x\},\{v,y\}, \{u,y\},\{u,z\} \in E(G)$. W.l.o.g, assume that, $\{u,x\},\{u,y\}, \{v,y\},\{v,z\} \in E(G)$. Now, consider the edge $\{v,w\}$, since, $\{v,y\}, \{v,z\} \in E(G)$ either $\{x,v\} \in E(G)$ or $\{x,w\} \in E(G)$. By (iii), $\{x,v\} \notin E(G)$. Thus, the only possibility is $\{x,w\} \in E(G)$. If $(u,v,w)$ is a path, then $(x,u,v,w)$ forms an induced $C_4$, which is a contradiction. If $(u,v,w)$ is $K_3$, then consider the edge $\{u,w\}$, either $\{u,z\} \in E(G)$ or $\{w,z\} \in E(G)$. By (iii), both $\{u,z\}, \{w,z\} \notin E(G)$. Thus, the edge $\{u,w\}$ is not universal to $S$, which is a contradiction.

\item[(v)] On the contrary, assume that there exists at least three mutually independent vertices, say $\{x,y,z\}$ in $S$. It is clear that $S$ is not a clique, therefore by (iii) and (iv), there exists a trivial component and a non-trivial component, i.e., a $K_2 = \{u,v\}$, in $G\backslash S$.  By (iii), $u (v)$ can be adjacent to at most one vertex in  $\{x,y,z\}\}$. W.l.o.g, assume that $\{u,x\}, \{v,y\} \in E(G)$. This implies, neither $u$ is adjacent to the vertex $z$ nor $v$ is adjacent to the vertex $z$, which is a contradiction to \emph{Theorem 1.(iii)}. 

\item[(vi)] On the contrary, $S$ contains either $P_4$ or $K_{1,m}$, $m \geq 3$. If $S$ contains a $P_4=(x,y,z,s)$: By (iii), $G\backslash S$ has exactly one trivial component and a non-trivial component $K_2$, say $G_1=\{u,v\}$ (by (iv)). By (iii), either $\{u,x\},\{u,y\} \in E(G)$ or $\{u,z\},\{u,s\} \in E(G)$. W.l.o.g, assume that $\{u,x\},\{u,y\} \in E(G)$. Since, $G$ is $2K_2$-free, every edge in $G_1$ is universal to $S$. Therefore, $\{v,z\},\{v,s\} \in E(G)$. By (iii), $\{u,z\}, \{v,y\} \notin E(G)$. Hence, $(u,y,z,v)$ forms an induced $C_4$, which is a contradiction. Proof for $S$ does not contains $K_{1,m}, m \geq 3$ directly follows from (v). $\hfill \qed$

\end{itemize}
\end{proof}

By the \emph{Theorem \ref{c4}}, it is clear that $S$ is $C_5$-free. Hence, the feedback vertex set in a $(2K_2,C_4)$-free graph can be determined as follows:

\begin{theorem}
\label{fvsc4}
Let $G$ be a connected $(2K_2, C_4)$-free graph and $S$ be any minimal vertex separator of $G$, then the cardinality of a minimum FVS, $F$, is equal to

\begin{itemize}
\item[(i)] $\mid G \backslash \{i,j\}\mid $, if $G$ is a complete graph, for some $i,j \in V(G)$.

\item[(ii)] $\mid S\backslash \{j\} \mid$, if $S$ is a independent set, for some $j \in S$.

\item[(iii)] $\mid S\backslash \{j\}\mid$, if $S$ is neither clique nor a independent set, for some $j \in S$.

\item[(iv)] $\min\limits_{j \in S} \{ \mid S\backslash \{j\} \mid + \mid FVS(G_1 \cup \{j\})\mid\}$, if $S$ is a clique, where $G_1$ is a non-trivial component in $G\backslash S$.
\end{itemize}
\end{theorem}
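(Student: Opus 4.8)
The plan is to read off the structure of $G$ from a minimal vertex separator $S$ using \emph{Theorem \ref{c4}} (supplemented by the $2K_2$-free facts of \emph{Theorem \ref{2k2pre}}: every trivial component of $G\backslash S$ is universal to $S$, and every edge of a non-trivial component is universal to $S$). In each case I would first delete the claimed set and check that what survives is a forest, and then prove optimality by bounding the size of \emph{every} induced forest of $G$. Cases (i) and (ii) are short. For (i), any two vertices of $K_n$ span a forest and any three span a triangle, so $\mid F\mid=n-2$. For (ii) with $\mid S\mid\geq 2$, $G[S]$ is disconnected, so by \emph{Theorem \ref{c4}(i)} $G$ is $C_5$ (a star has only singleton separators); then $\mid S\mid=2$, deleting one vertex of the $5$-cycle gives $P_4$, and one vertex never suffices. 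For $\mid S\mid=1$ with no non-trivial component in $G\backslash S$, every component is a pendant on the cut vertex, $G$ is a tree, and $F=\emptyset$; the remaining sub-case ($\mid S\mid=1$ with a non-trivial component) is exactly case (iv), an overlap I would flag explicitly.

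In case (iii), \emph{Theorem \ref{c4}(iii)--(vi)} forces $G$ into the shape $G[S]+w+\{u,v\}$: since $S$ is not a clique it has exactly one trivial component $\{w\}$, and since $S$ separates it must have a second component, which by (iv) is an edge $\{u,v\}$; moreover $w$ is universal to $S$, $\alpha(G[S])\leq 2$, and by (iii) neither $u$ nor $v$ is universal to $S$. Picking $s'\in S$ non-adjacent to $u$ (hence adjacent to $v$, as $uv$ is universal to $S$), deleting $S\backslash\{s'\}$ leaves the induced path $(w,s',v,u)$, a forest; so $\mid F\mid\leq\mid S\mid-1$. For the converse I would show that no $5$-vertex induced subgraph of $G$ is a forest: such a set $A$ must contain at least two vertices of $S$, and a short case check (using $\alpha(G[S])\leq 2$, the universality of $uv$ to $S$, and $w$ adjacent to all of $S$) produces a $C_3$, $C_4$ or $C_5$ inside $A$ in every configuration. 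Since $\mid V(G)\mid=\mid S\mid+3$, this gives $\mid F\mid\geq(\mid S\mid+3)-4=\mid S\mid-1$.

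In case (iv), fix $j\in S$ and delete $S\backslash\{j\}$. Because distinct components of $G\backslash S$ have no edge between them, every surviving cycle lies inside $G_1\cup\{j\}$ (trivial components hang off $j$ as pendants), so $\mid F\mid\leq\mid S\backslash\{j\}\mid+\mid FVS(G_1\cup\{j\})\mid$; minimizing over $j$ gives the upper bound, and keeping no vertex of $S$ cannot help since $\mid FVS(G_1\cup\{j\})\mid\leq\mid FVS(G_1)\mid+1$. For the lower bound I would prove that any feedback vertex set spares at most one vertex of $S$: if it spared two (necessarily adjacent, $S$ being a clique) vertices $a,b$, then a trivial component universal to $\{a,b\}$, or an edge of $G_1$ universal to $\{a,b\}$, yields an uncovered triangle; once at most one vertex $j\in S$ remains, the deleted set must still be a feedback vertex set of $G_1\cup\{j\}$. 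The recursion terminates because $G_1\cup\{j\}$ is a proper induced subgraph of $G$, again $(2K_2,C_4)$-free, and eventually reduces to case (i), to an acyclic graph, or to a graph whose minimal separators are not cliques, i.e.\ case (ii) or (iii).

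The step I expect to be the real obstacle is optimality in cases (iii) and (iv): ruling out feedback vertex sets that retain two or more vertices of $S$. This is where the universality conditions of \emph{Theorem \ref{c4}} and \emph{Theorem \ref{2k2pre}} do all the work — one must show, configuration by configuration, that any such retention closes a short cycle that the set fails to hit. The upper bounds and the termination of the recursion are routine once the structure theorem is in hand.
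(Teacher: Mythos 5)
Your cases (i)--(iii) are essentially correct, and in (iii) you take a genuinely different route from the paper: the paper splits $FVS(G)$ into $W=FVS(S)$ plus an FVS of $(S\backslash W)\cup G_1\cup G_2$ and enumerates the possible remnant shapes of $S\backslash W$, whereas you pin down $G\backslash S=\{w\}\cup\{u,v\}$, exhibit the induced path $(w,s',v,u)$ for the upper bound, and prove the lower bound by showing no five vertices of $G$ induce a forest (any four vertices of $S$ contain a triangle, since a triangle-free set with independence number at most $2$ on four vertices would be $C_4$, $P_4$ or $2K_2$, all excluded by \emph{Theorem \ref{c4}}(v),(vi) and $(2K_2,C_4)$-freeness; the remaining configurations close a $C_3$, $C_4$ or $C_5$ through $w$ or through the universal edge $\{u,v\}$). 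This induced-forest bound $\mid V(G)\mid-4=\mid S\mid-1$ is tighter and more self-contained than the paper's sketch, and your explicit flag of the $\mid S\mid=1$ overlap between (ii) and (iv) addresses an ambiguity the paper silently glosses.

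The genuine gap is the lower bound in case (iv). The claim ``any feedback vertex set spares at most one vertex of $S$'' is false as stated: the triangle $a,b,w$ (or $a,b,p$ for an $S$-universal $p\in V(G_1)$) is ``uncovered'' only if its third vertex also survives, and a feedback vertex set may hit it there instead. Concretely, take $S=\{a,b\}$, one trivial component $w$, and $G_1$ a triangle $p,q,r$ universal to $S$ (so $G$ is $K_5$ plus $w$ adjacent to $a,b$): then $\{w,p,q,r\}$ is a feedback vertex set sparing both $a$ and $b$. So your argument as written does not dispose of such sets; what it actually shows (using \emph{Theorem \ref{c4}}(ii), which for a clique $S$ forces every $G_1$-vertex with an $S$-neighbour to be universal to $S$) is that a feedback vertex set sparing two vertices of $S$ must contain every trivial component and every $S$-universal vertex of $G_1$. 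That is the missing step, and it finishes the case: the $S$-universal vertices of $G_1$ form a vertex cover $C$ of $G_1$ (every edge of $G_1$ is universal to $S$, hence has an $S$-universal endpoint), and deleting $C$ from $G_1\cup\{j\}$ leaves a star, so $\mid C\mid \geq \mid FVS(G_1\cup\{j\})\mid$; hence any such set has size at least $(\mid S\mid-2)+\mid T\mid+\mid C\mid \geq \mid S\mid-1+\min_{j}\mid FVS(G_1\cup\{j\})\mid$. Combined with your (correct) treatment of feedback vertex sets sparing one or zero vertices of $S$, this repairs (iv); note the paper's own proof asserts ``$FVS(G)$ contains at least $S\backslash\{j\}$ vertices'' with exactly the same unproved step.
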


\begin{proof}
\begin{itemize}
\item[(i)] The proof is obvious from the definition of complete graphs.

\item[(ii)] From \emph{Theorem \ref{c4}.(i)}, it is clear that $S$ is independent only when $G = C_5$. Also, $\mid S \mid = 2$ and $\mid S \mid -1$ says that $FVS(G) = \{j\}$, for some $j\in V(G)$ i.e., $\mid FVS(G) \mid = 1$. Thus, the removal of a vertex from $G$ makes $G$ a tree and it is minimum.

\item[(iii)] We know that, $S$ is a split graph. Since $S$ is not complete, $G\backslash S$ has a non-trivial component, $G_1=K_2= \{u,v\}$ and a trivial component, $G_2 = \{w\}$. Thus, finding $FVS(G)$ is equivalent to finding $W=FVS(S)$ and $FVS((S\backslash W) \cup G_1 \cup G_2)$. The possible structures of $S\backslash W$ are (a) $2K_1$ (b) $K_1 \cup K_2$ (c) $K_1 \cup P_3$ (d) $K_2$ (e) $P_3$ (by \emph{Theorem \ref{c4}}). In all the cases, we are forced to pick exactly $(S\backslash W)\backslash \{i\}$, for some $i \in S\backslash W$, vertices. Thus, $FVS(G) = S\backslash \{j\}$, for some $j \in S$. 
%

\item[(iv)] Since, $S$ is a clique and $G\backslash S$ has at least one trivial component, $FVS(G)$ contains at least $S\backslash \{j\}$, for some $j \in S$, vertices. If $G\backslash S$ has a non-trivial component, $G_1$, then we are forced to find $FVS(G_1 \cup \{j\})$ in order to compute $FVS(G)$. Thus, $FVS(G) = \min\limits_{j \in S} \{ \mid S\backslash \{j\} \mid + \mid FVS(G_1 \cup \{j\})\mid\}$. It is minimum because we are varying $j$ for all vertices in $S$ and picking up the minimum. $\hfill \qed$
\end{itemize}
\end{proof}

\noindent \textbf{Remark:}
Minimum dominating set and Steiner tree problem are NP-Complete restricted to $2K_2$-free graphs \cite{white}. In this paper, we have identified two non-trivial subclasses of $2K_2$-free graphs where these problems are polynomial time solvable.

\section{Applications}

In this section, we consider the complexity of connected dominating set and connected FVS using the results presented in \emph{Sections \ref{sec::algs}-\ref{2k2}}. It is interesting to observe that every minimum connected dominating set contains a minimum dominating set as a vertex subset. It is natural to ask, can we use a minimum dominating set as a terminal set and call Steiner tree algorithm as a black box to get a minimum connected dominating set. Surprisingly, this observation holds good for $SC_k$ graphs and subclasses of $2K_2$-free graphs. A similar observation is true for connected vertex cover and connected FVS. Further, maximum leaf spanning tree problem is also polynomial time solvable restricted to $SC_k$ and subclasses of $2K_2$-free graphs. Due to page constraint, the proof details are missing in this paper.


\end{document}